\newcommand{\vertex}{\node[vertex]}
\tikzstyle{vertex}=[circle,fill=black!25,minimum size=20pt,inner sep=0pt]
\renewcommand{\theenumi}{\roman{enumi}}
\def\<{\langle}
\def\>{\rangle}
\newcommand{{\BG}}{\ensuremath{\mathbb {G}}\xspace}
\newcommand{{\BK}}{\ensuremath{\mathbb {K}}\xspace}
\let\Im\relax
\DeclareMathOperator{\Im}{Im}
\DeclareMathOperator{\Span}{span}
\newtheorem*{theorem*}{Theorem}
\newtheorem{theorem}{Theorem}
\newtheorem{proposition}[theorem]{Proposition}
\newtheorem{lemma}[theorem]{Lemma}
\newtheorem{corollary}[theorem]{Corollary}
\theoremstyle{definition}
\newtheorem{definition}[theorem]{Definition}
\newtheorem{example}[theorem]{Example}
\newtheorem{remark}[theorem]{Remark}
\newtheorem{notation}[theorem]{Notation}
\newtheorem{algorithm}[theorem]{Algorithm}
\newtheorem{assumption}[theorem]{Assumption}
\numberwithin{equation}{section}
\numberwithin{theorem}{section}
\newcolumntype{P}[1]{>{\centering\arraybackslash}p{#1}}
\newcolumntype{M}[1]{>{\centering\arraybackslash}m{#1}}
\renewcommand{\to}{%
   \ifbool{@display}{\longrightarrow}{\rightarrow}%
   }
\let\shortmapsto\mapsto
\renewcommand{\mapsto}{%
   \ifbool{@display}{\longmapsto}{\shortmapsto}%
   }
\newlength{\olen}
\newlength{\ulen}
\newlength{\xlen}
\newcommand{\xra}[2][]{%
   \ifbool{@display}%
      {\settowidth{\olen}{$\overset{#2}{\longrightarrow}$}%
       \settowidth{\ulen}{$\underset{#1}{\longrightarrow}$}%
       \settowidth{\xlen}{$\xrightarrow[#1]{#2}$}%
       \ifdimgreater{\olen}{\xlen}%
          {\underset{#1}{\overset{#2}{\longrightarrow}}}%
          {\ifdimgreater{\ulen}{\xlen}%
             {\underset{#1}{\overset{#2}{\longrightarrow}}}
             {\xrightarrow[#1]{#2}}}}%
      {\xrightarrow[#1]{#2}}
   }
\newcommand{\xyra}[2][]{%
   \settowidth{\xlen}{$\xrightarrow[#1]{#2}$}%
   \ifbool{@display}%
      {\settowidth{\olen}{$\overset{#2}{\longrightarrow}$}%
       \settowidth{\ulen}{$\underset{#1}{\longrightarrow}$}%
       \ifdimgreater{\olen}{\xlen}%
          {\mathrel{\xymatrix@M=.12ex@C=3.2ex{\ar[r]^-{#2}_-{#1} &}}}%
          {\ifdimgreater{\ulen}{\xlen}%
             {\mathrel{\xymatrix@M=.12ex@C=3.2ex{\ar[r]^-{#2}_-{#1} &}}}
             {\mathrel{\xymatrix@M=.12ex@C=\the\xlen{\ar[r]^-{#2}_-{#1} &}}}}}%
      {\mathrel{\xymatrix@M=.12ex@C=\the\xlen{\ar[r]^-{#2}_-{#1} &}}}%
   }
\newcommand{\xla}[2][]{%
   \ifbool{@display}%
      {\settowidth{\olen}{$\overset{#2}{\longleftarrow}$}%
       \settowidth{\ulen}{$\underset{#1}{\longleftarrow}$}%
       \settowidth{\xlen}{$\xleftarrow[#1]{#2}$}%
       \ifdimgreater{\olen}{\xlen}%
          {\underset{#1}{\overset{#2}{\longleftarrow}}}%
          {\ifdimgreater{\ulen}{\xlen}%
             {\underset{#1}{\overset{#2}{\longleftarrow}}}
             {\xleftarrow[#1]{#2}}}}%
      {\xleftarrow[#1]{#2}}
   }
\newcommand{\isoarrow}{%
   \ifbool{@display}{\overset{\sim}{\longrightarrow}}{\xrightarrow\sim}%
   }
\newcommand*{\defeq}{\mathrel{\rlap{%
                     \raisebox{0.3ex}{$\m@th\cdot$}}%
                     \raisebox{-0.3ex}{$\m@th\cdot$}}%
                     =}
\begin{document}

\date{\today}

\title[]{Regularity of Unipotent Elements in Total Positivity}
    \author{Haiyu Chen}
    \address[H. Chen]{The Institute of Mathematical Sciences and Department of Mathematics, The Chinese University of Hong Kong, Shatin, N.T., Hong Kong.}
    \email{hychen@math.cuhk.edu.hk}

    \author{Kaitao Xie}
        \address[K. Xie]{Department of Mathematics, The University of Hong Kong, Pokfulam, Hong Kong.}
        \email{kaitaoxie@connect.hku.hk}
\thanks{}

\keywords{Reductive groups, total positivity, unipotent conjugacy classes}
\subjclass[2010]{15B48, 20E45, 20G20}

\date{\today}

\begin{abstract}
Let $G$ be a connected reductive group split over $\mathbb R$. We show that every unipotent element in the totally nonnegative monoid of $G$ is regular in some Levi subgroups, confirming a conjecture of Lusztig. 
\end{abstract}

\maketitle

\section{Introduction}
Let $G$ be a connected reductive group over $\mathbb C$, split over $\mathbb R$. Let $G_{\geq 0}$ be the totally nonnegative submonoid attached to a pinning of $G$ defined by Lusztig in \cite{L94}. In this paper, we are interested in the interaction between $G_{\geq 0}$ and the unipotent conjugacy classes in $G$. Here, we review some previous work on the related topic.

\begin{itemize}
    \item  The set of totally nonnegative unipotent elements decomposes into cells indexed by a pair of Weyl group elements $(w_1,w_2)$ with certain condition \cite[Theorem 6.6]{L94}, which we refer to as totally nonnegative unipotent cells in the sequel.
    \item He and Lusztig showed that, for each totally nonnegative unipotent cell, its intersection with the regular unipotent class is either empty or the whole cell \cite[Lemma 1.1]{HeLus22}. The cells that consist of regular unipotent elements are determined. 
\end{itemize}

The main result of this paper is the following theorem.

\begin{theorem*}
The totally nonnegative unipotent cell indexed by $(w_1,w_2)$ lies in a single conjugacy class which only depends on the supports of $w_1$ and $w_2$. This conjugacy class consists of regular unipotent elements of a family of conjugated Levi subgroups of $G$. Moreover, \cref{algorithm: weight_alg} provides a way to determine these Levi subgroups.
\end{theorem*}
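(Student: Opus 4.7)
The plan is to reduce the problem to the case of a Levi subgroup so that the He--Lusztig criterion \cite[Lemma 1.1]{HeLus22} can be applied there directly. The key compatibility, which Lusztig's construction makes manifest, is that the pinning of $G$ restricts to a pinning of any standard Levi subgroup $L_J$ (with $J$ a subset of simple roots), and hence $(L_J)_{\geq 0}$ sits inside $G_{\geq 0}$ as a submonoid whose cell structure is inherited from that of $G$.

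First I would analyze the explicit parametrization of the cell $\CC_{w_1,w_2}$: by Lusztig's construction, its elements admit canonical factorizations built from the one-parameter unipotent subgroups $x_\alpha(t)$ for simple roots $\alpha \in \supp(w_1) \cup \supp(w_2)$, together with Weyl group lifts and pinning data. Setting $J := \supp(w_1) \cup \supp(w_2)$, the task is to show that, after an appropriate conjugation by a Weyl element (or more generally an element of $G_{\geq 0}$), every element of $\CC_{w_1,w_2}$ is $G$-conjugate to an element of the standard Levi $L_J$, and that the resulting subset of $L_J$ is precisely a totally nonnegative unipotent cell of $L_J$ with respect to the restricted pinning.

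Next I would invoke the He--Lusztig criterion inside $L_J$. The point is that in $L_J$, both $w_1$ and $w_2$ (viewed as elements of the Weyl group $W_J$) have full support. Inspecting the criterion from \cite[Lemma 1.1]{HeLus22}, this is exactly the condition that forces the cell in $L_J$ to consist of regular unipotent elements of $L_J$. Since regular unipotent elements in a connected reductive group form a single conjugacy class, $\CC_{w_1,w_2}$ is contained in a single $G$-conjugacy class, namely the $G$-orbit of a regular unipotent element of $L_J$. This class depends only on $J$, i.e.\ only on the supports of $w_1$ and $w_2$, and the collection of Levis realizing its regularity form a single $G$-conjugacy class of Levis.

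The algorithm \cref{algorithm: weight_alg} then encodes this reduction combinatorially: from the pair $(w_1, w_2)$ it reads off the conjugacy class of $L_J$ (and specific conjugating data) via a rewriting procedure on reduced expressions. The main obstacle I expect is the first step --- producing the conjugation that pushes $\CC_{w_1,w_2}$ into $L_J$ and matching it bijectively with a cell of $(L_J)_{\geq 0}$ --- since the parametrization involves $w_1$ and $w_2$ non-trivially and one must carefully track how the corresponding factorizations transform. Verifying that the He--Lusztig regularity criterion is met inside $L_J$ is a separate combinatorial check that the algorithm is presumably designed to automate.
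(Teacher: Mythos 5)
Your proposal breaks down at its central step. You set $J:=\supp(w_1)\cup\supp(w_2)$, push the cell into the standard Levi $L_J$, and then assert that because $w_1,w_2$ have full support in $W_J$, the He--Lusztig criterion forces the cell to consist of \emph{regular} unipotent elements of $L_J$. That assertion is false, and the failure is exactly the phenomenon the paper is about. Already in type $A_2$ with $w_1=s_{\alpha_1}$, $w_2=s_{\alpha_2}$ (so $J=I$ and both elements have full support), a representative of the corresponding class is $X_{\alpha_1}+X_{-\alpha_2}=E_{12}+E_{32}\in\mathfrak{sl}_3$, which squares to zero and has Jordan type $[2,1]$ --- not regular in $\SL_3$, but regular only in a Levi of type $A_1$. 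The result of \cite[Lemma 1.1]{HeLus22} quoted in the introduction says only that a cell meets the regular class either trivially or entirely; it does not say that full support of $(w_1,w_2)$ implies regularity (that implication holds when one of $w_1,w_2$ is trivial, but not in general, precisely because the element mixes positive and negative root groups). Consequently the Levi in the theorem is generally a \emph{proper} subgroup of $L_{\supp(w_1)\cup\supp(w_2)}$ --- see \cref{AlgorithmExample}, where $J_1\cup J_2$ is all of $D_{16}$ yet the output $J$ has only $12$ vertices --- and identifying it is the actual content of \cref{algorithm: weight_alg}, not a bookkeeping afterthought.

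What the paper does instead: it first proves directly (\cref{lem:uniqueness}, \cref{cor:singleConjClass}) that $\left(L_{J_1}^{\mathrm{reg}}\cap U^+\right)\cdot\left(L_{J_2}^{\mathrm{reg}}\cap U^-\right)$ lies in a single conjugacy class $\mathcal C_{(J_1,J_2)}$ by an explicit conjugation argument using the uniqueness of the Borel containing a regular unipotent element and a torus correction (this gives the ``depends only on supports'' statement without any regularity input). It then passes to the Lie algebra representative $X=\sum_{\alpha\in J_1}X_\alpha+\sum_{\beta\in J_2}X_{-\beta}$ and determines its orbit case by case: in types $A$ and $D$ by computing the Jordan partition via a digraph algorithm and exhibiting a regular element of an explicit Levi with the same partition; in type $E$ by explicit adjoint-action manipulations (\cref{lemma:pattern}) that strip off root vectors until a single Levi's regular element remains; and in non-simply-laced types by folding. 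Your reduction to $L_{J_1\cup J_2}$ is harmless (the paper makes the same reduction), but the remainder of your argument substitutes a false criterion for the genuinely hard computation, so the proof does not go through.
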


In particular, every totally nonnegative unipotent element is a regular element in some Levi subgroups. This confirms a conjecture in \cite[\S9.2]{L19}. As a consequence of this result, the main conjecture in \cite[\S9.2]{L19} and the conjecture \cite[Conjecture 3.1 (1)]{HeLus22} are equivalent. 

To prove this result, we first show that each totally nonnegative unipotent cell lies in a single conjugacy class $\mathcal C_{(J_1,J_2)}$ which only depends on the supports $J_1=\text{supp}(w_1)$ and $J_2=\text{supp}(w_2)$, rather than $w_1$ and $w_2$. Then, we pick up an element $x\in \mathcal C_{(J_1,J_2)}$ and conjugate it to another element $x_J$ that is regular in a standard Levi subgroup $L_J$. Here, $J$ is a subset of simple roots. This is done in \cref{section:AD} for types $A$ and $D$, and \cref{section:E} for $E$. For non-simply-laced types, we obtain the result by folding method (\cref{section:folding}). 

Let $I$ be the set of simple roots that is compatible with the chosen pinning. Our result allows one to define a map 
$$\mathcal F: \{(J_1,J_2)\mid J_1,J_2\subseteq I,J_1\cap J_2=\emptyset\}\to \{J\mid  J\subseteq I\}/\sim$$
where $\sim$ is an equivalent relation on the set $\{J|J\subseteq I\}$ defined by $J\sim J^\prime$ if there is $w\in W$ such that $J = w \cdot J^\prime$. Here, the conjugacy class $\mathcal C_{(J_1,J_2)}$ contains the regular unipotent elements in the Levi subgroup $L_J$, where $J$ is a representative in $\mathcal F(J_1,J_2)$. We provide an algorithm to calculate the image of $(J_1,J_2)$ under $\mathcal F$ (\cref{section:weight}). We use a computer program to verify that this algorithm indeed gives $\mathcal F(J_1,J_2)$ in the exceptional types.

\subsection{Acknowledgement}

We truly appreciate Xuhua He for his lecture on total positivity where we were introduced to this wonderful area and his valuable suggestions. We would like to express our gratitude to Syx Pek and Shiwei Isaac Koh for many helpful early discussions. We would like to thank George Lusztig and Felix Schremmer for helpful comments and suggestions.

\section{Preliminaries} 

We begin by introducing the basic setup and results of total positivity. We then give a brief introduction to the classification of unipotent conjugacy classes due to Bala and Carter, followed by a review of some basic facts in Lie theory that will be used frequently in the sequel.

\subsection{Total Positivity} 

We review some basic results from \cite{L94}. Let $G$ be a connected reductive group over $\mathbb C$, split over $\mathbb R$, with the set of unipotent elements denoted by $\mathcal U$. Fix a pinning $\underline{\mathbf P} = (T,B^+,B^-,u_{\alpha_i},u_{-\alpha_i};\alpha_i\in I)$ and let $G_{\geq 0}$ be the totally nonnegative submonoid of $G$ associated to $\underline{\mathbf P}$. Here, $I$ is the set of simple roots. Denote $W = N_G(T)/T$ the Weyl group.

\begin{definition}
For each $w\in W$, choose a sequence $(\alpha_{i_1},\alpha_{i_2},...,\alpha_{i_n})$ of simple roots such that $s_{\alpha_{i_1}}s_{\alpha_{i_2}}...s_{\alpha_{i_n}}$ is a reduced expression of $w$. Define the \emph{cells}
$$U^+(w)=\{u_{\alpha_{i_1}}(a_1)u_{\alpha_{i_2}}(a_2)...u_{\alpha_{i_n}}(a_n)\mid  (a_1,a_2,...,a_n)\in\mathbb R^n_{>0}\} ,$$
$$U^-(w)=\{u_{-\alpha_{i_1}}(a_1)u_{-\alpha_{i_2}}(a_2)...u_{-\alpha_{i_n}}(a_n)\mid (a_1,a_2,...,a_n)\in\mathbb R^n_{>0}\}.$$
\end{definition}

It is shown that $U^+(w)$ and $U^-(w)$ are independent of the choice of reduced expressions of $w$ in \cite[Proposition 2.7]{L94}.

For each $w\in W$, denote the \emph{support} of $w$ to be 
$$\text{supp}(w)=\{\alpha\in I\mid  s_{\alpha}\text{ occurs in some reduced expressions of }w\}.$$
By Matsumoto's theorem, if $s_\alpha\in w$ occurs in some reduced expression, it occurs in any reduced expressions.

The following result provides a parametrization of unipotent elements in $G_{\geq 0}$.

\begin{proposition}\cite[Theorem 6.6]{L94}  \label{prop:cell}
The set $G_{\geq 0}\cap\mathcal U$ has a decomposition into cells: $$G_{\geq 0}\cap\mathcal U=
\underset{(w_1,w_2)\in Z}{\bigsqcup} U^+(w_1)U^-(w_2)$$
where the index set is 
$$Z = \{(w_1,w_2)\in W\times W\mid \text{supp}(w_1)\cap\text{supp}(w_2)=\emptyset\}.$$ 
\end{proposition}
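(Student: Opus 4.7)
My plan is to establish three statements: (i) each cell $U^+(w_1)U^-(w_2)$ with $(w_1,w_2)\in Z$ is contained in $G_{\geq 0}\cap\mathcal U$; (ii) the collection is pairwise disjoint; (iii) every element of $G_{\geq 0}\cap\mathcal U$ lies in some cell.

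For (i), the main input is a commutation principle. Write $J_k=\text{supp}(w_k)$ and let $U^\pm_J$ denote the subgroup generated by $\{u_{\pm\gamma}:\gamma\in\Phi^+_J\}$. When $J_1\cap J_2=\emptyset$, the subgroups $U^+_{J_1}$ and $U^-_{J_2}$ commute elementwise: by the Chevalley commutator relations this reduces to showing that for $\gamma\in\Phi^+_{J_1}$ and $\delta\in\Phi^+_{J_2}$ no $\mathbb{Z}_{>0}$-combination $i\gamma-j\delta$ is a root, which holds because such a combination has coefficients of both signs in the simple-root basis. A reduced expression of $w_k$ uses only $s_\alpha$ with $\alpha\in J_k$, so $U^\pm(w_k)\subseteq U^\pm_{J_k}$. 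Hence for $(w_1,w_2)\in Z$, any $u_+\in U^+(w_1)$ commutes with any $u_-\in U^-(w_2)$; their product is a product of commuting unipotents, hence unipotent, and lies in $G_{\geq 0}$ by construction. For (ii), if $u_+u_-=u_+'u_-'$ with $u_\pm\in U^\pm(w_\pm)$, $u_\pm'\in U^\pm(w_\pm')$, uniqueness of the Gauss decomposition $U^+\times U^-\hookrightarrow B^+B^-$ forces $u_\pm=u_\pm'$, and the disjoint cell decomposition $U^\pm_{\geq 0}=\bigsqcup_w U^\pm(w)$ of \cite{L94} identifies $w_\pm=w_\pm'$.

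Statement (iii) is the main obstacle. First, I would show $G_{\geq 0}\subseteq B^-B^+$: this follows by induction on the length of a positive word for $g$, using that each fundamental generalized minor $\Delta^\omega$ is strictly positive on the generators $u_{\pm\alpha_i}(a)$ and remains strictly positive after multiplying by another such generator. Hence every $g\in G_{\geq 0}$ admits a Gauss decomposition $g=u_-tu_+$, and by the positive Birkhoff theorem in \cite{L94} the factors refine to $u_-\in U^-_{\geq 0}$, $t\in T_{>0}$, $u_+\in U^+_{\geq 0}$. Next, if $g\in\mathcal U$ I claim $t=1$. For each fundamental weight $\omega$ one has $\Delta^\omega(g)=\langle v_\omega^*,g v_\omega\rangle=\omega(t)$, because $u_+$ fixes $v_\omega$ and $u_-$ pushes $v_\omega$ only into strictly lower weight spaces. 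The joint constraint of total nonnegativity and unipotency forces $\Delta^\omega(g)=1$, by reduction to the rank-one computation: a TNN unipotent element of $SL_2$ of shape $\begin{pmatrix}1+c & a\\ b & 1-c\end{pmatrix}$ must satisfy $c^2+ab=0$ with $a,b\geq 0$, hence $c=0$ and the leading minor equals $1$. Running this over all fundamental weights gives $t=1$, so $g=u_-u_+$. Writing $u_\pm\in U^\pm(w_\pm)$, disjointness of the supports follows by contradiction via the $SL_2$-exchange relation
\[
u_{-\alpha}(b)u_\alpha(a)=u_\alpha\!\left(\tfrac{a}{1+ab}\right)\alpha^\vee(1+ab)\,u_{-\alpha}\!\left(\tfrac{b}{1+ab}\right),
\]
which would reintroduce a nontrivial torus factor if a common simple root $\alpha$ lay in both supports, contradicting $t=1$.

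The hard part is the deduction $t=1$ from total nonnegativity and unipotency. One must carefully isolate an $SL_2$-subgroup at each fundamental weight $\omega$, ensuring that the corresponding restriction of $g$ inherits total nonnegativity and unipotency so that the rank-one scalar argument above applies and rules out the $\alpha$-coroot contribution to $t$. An alternative cleaner route is induction on the rank of $G$: pass to a maximal Levi subgroup by deleting one simple root and combine with the $SL_2$ computation to annihilate each coroot-factor of $t$ in turn.
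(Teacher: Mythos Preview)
The paper does not give its own proof of this proposition; it is quoted verbatim from \cite[Theorem~6.6]{L94} and used as a black box. So there is no argument in the paper to compare your sketch against, and your writeup should really be measured against Lusztig's original proof rather than anything here.

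That said, your outline for (i) and (ii) is correct and standard: the commutation of $U^+_{J_1}$ with $U^-_{J_2}$ when $J_1\cap J_2=\emptyset$, together with the uniqueness of the Gauss factorization and the cell decomposition $U^\pm_{\geq 0}=\bigsqcup_w U^\pm(w)$, handles containment and disjointness.

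Part (iii) has two genuine gaps. First, the step ``$t=1$'': your $SL_2$ computation is fine in rank one, but the reduction you propose is not justified. Restricting $g$ to an $SL_2$-subgroup, or projecting to a maximal Levi, does not in general produce an element that is simultaneously totally nonnegative and unipotent in the smaller group, so you cannot directly feed it into the rank-one argument; the inductive scheme you sketch at the end needs a mechanism that preserves both properties, and you have not supplied one. Second, the ``disjoint supports'' step via the $SL_2$-exchange relation requires arranging a common simple root $\alpha$ at the right end of a reduced word for $w_1$ and the left end of one for $w_2$, and this is not always possible: for instance in type $A_2$ with $w_1=w_2=s_1s_2$, every reduced expression of $w_1$ ends in $s_2$ while every reduced expression of $w_2$ begins with $s_1$, so no common $\alpha$ can be placed adjacently, and the exchange cannot be applied as you describe. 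Lusztig's proof in \cite{L94} proceeds differently, and you would need either to consult it or to find a replacement for these two steps.
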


\subsection{Classification of unipotent conjugacy classes}  
We review the classification of unipotent classes following the textbook of Carter \cite{Ca}. 

A closed connected subgroup $P$ of $G$ that contains a Borel subgroup of $G$ is called a \emph{parabolic subgroup}. For such a subgroup, there is a Levi decomposition $P = L_P\cdot U_P$, where $L_P$ is a reductive group, called a \emph{Levi subgroup}, and $U_P$ is the unipotent radical of $P$. Fix a maximal torus $T$ and Borel subgroup $B^+$ of $G$ containing $T$, one chooses a set of simple roots, denoted by $I$. The parabolic subgroups that contain $B^+$ are in correspondence with subsets of $I$. Let $P_J^+$ be the parabolic subgroup of $G$ associated with the subset $J$ of $I$, and $P^-_J$ be its opposite. We call $L_J:=P^+_J\cap P^-_J$ the \emph{standard} Levi subgroup of $G$ associated with $J$.

For a unipotent conjugacy class $\mathcal C$ of $G$, there is a Levi subgroup $L$ of $G$ such that it is minimal among the Levi subgroups that intersect $\mathcal C$. For any $\mathcal C$, the minimal Levi subgroup is well-defined up to conjugacy. We say that $L$ is a \emph{minimal} Levi subgroup of $\mathcal C$. We say that $P = L_P\cdot U_P$ is a \emph{distinguished} parabolic subgroup of $G$ if $\dim L_P = \dim U_P/(U_P,U_P)$, where $(U_P,U_P)$ is the derived subgroup of $U_P$.

The below theorem, due to Bala and Carter, gives a classification of unipotent conjugacy classes.

\begin{theorem}
There is a natural bijection 

\begin{tabular}{c c p{2cm}}
$\left\{\text{unipotent conjugacy classes of $G$}\right\}$ & $\longleftrightarrow$ & $\left\{\text{$G$-conjugacy classes of pairs $(L,P_L)$}\right\}$
\end{tabular}\\
where $L$ is a Levi subgroup of $G$ and $P_L$ is a distinguished parabolic subgroup of the derived subgroup $(L,L)$ of $L$.
\end{theorem}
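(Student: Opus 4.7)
The plan is to construct the bijection in both directions and verify they are mutually inverse.

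For the forward direction, given a unipotent class $\mathcal C$ and a representative $u \in \mathcal C$, I would first choose a Levi subgroup $L$ of $G$ of minimal dimension among those meeting $\mathcal C$. A rank argument, combined with the conjugacy of maximal tori in the centralizer $C_G(u)$, would show that $L$ is well-defined up to $G$-conjugacy; concretely, $L$ is the centralizer in $G$ of a maximal torus of $C_G(u)^\circ$. Inside $(L,L)$ the element $u$ is then \emph{distinguished}, meaning it lies in no proper Levi subgroup of $(L,L)$.

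Next, to produce the parabolic $P_L$ from such a distinguished $u$, I would invoke the Jacobson--Morozov theorem to embed $u = \exp(e)$ into an $\mathfrak{sl}_2$-triple $(e,h,f)$ inside $\mathfrak{m} := \Lie(L,L)$. Decomposing $\mathfrak{m} = \bigoplus_{i \in \BZ} \mathfrak{m}(i)$ into $\ad(h)$-eigenspaces produces a parabolic subalgebra $\mathfrak{p}_L = \bigoplus_{i \ge 0} \mathfrak{m}(i)$ with Levi factor $\mathfrak{m}(0)$ and nilradical $\bigoplus_{i > 0}\mathfrak{m}(i)$. The key computation uses $\mathfrak{sl}_2$-representation theory: distinguishedness of $u$ forces the $\mathfrak{sl}_2$-triple to have trivial reductive centralizer in $\mathfrak{m}$, hence $\mathfrak{m}$ contains no trivial $\mathfrak{sl}_2$-summand, which yields $\dim \mathfrak{m}(0) = \dim \mathfrak{m}(2)$. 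Since $\mathfrak{m}(2)$ identifies with $\Lie(U_{P_L})/\Lie((U_{P_L},U_{P_L}))$ via the bracket action of $e$, this gives $\dim L_{P_L} = \dim U_{P_L}/(U_{P_L},U_{P_L})$, so $P_L$ is distinguished. Uniqueness of $P_L$ up to $L$-conjugacy then follows from Kostant's conjugacy theorem for $\mathfrak{sl}_2$-triples attached to a given nilpotent.

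For the backward direction, given a pair $(L,P_L)$, I would appeal to Richardson's dense-orbit theorem: $L_{P_L}$ acts on the unipotent radical $U_{P_L}$ with a unique open orbit, and any element of that orbit is a distinguished unipotent of $(L,L)$; its $G$-conjugacy class provides the inverse assignment. The two maps are mutually inverse: starting from $u$, the $\mathfrak{sl}_2$-grading reconstructed from the Richardson element recovers the same parabolic, and the open $L_{P_L}$-orbit on $U_{P_L}$ contains $u$ itself.

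The main obstacle is the well-definedness of the minimal Levi $L$ up to $G$-conjugacy: two minimal Levis containing $u$ must both correspond to maximal tori of the possibly-disconnected centralizer $C_G(u)$, and one must invoke conjugacy of such tori within $C_G(u)^\circ$ to identify them. A secondary subtle point is verifying that the Jacobson--Morozov parabolic is distinguished exactly when $u$ is distinguished in $(L,L)$; this amounts to matching the $\ad(h)$-zero weight space with the Lie algebra of the centralizer of a maximal torus in $L_{P_L}$, and observing that any proper Levi of $(L,L)$ containing $u$ would give rise to an additional trivial $\mathfrak{sl}_2$-summand coming from a nontrivial central torus of that Levi.
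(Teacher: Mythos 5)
This theorem is the classical Bala--Carter classification; the paper states it without proof, citing Carter's textbook, so there is no in-paper argument to compare against. Your outline is essentially the standard proof found in that source (and in Collingwood--McGovern): minimal Levi via the centralizer of a maximal torus of $C_G(u)$, the Jacobson--Morozov parabolic for distinguished elements, and Richardson's dense-orbit theorem for the inverse map. Two genuine gaps remain in your sketch, though. First, your identification of $\mathfrak{m}(2)$ with $\Lie(U_{P_L})/\Lie\bigl((U_{P_L},U_{P_L})\bigr)$ silently assumes the grading is even, i.e.\ $\mathfrak{m}(1)=0$; otherwise the nilradical is $\bigoplus_{i\geq 1}\mathfrak{m}(i)$ and the quotient by its derived algebra also sees $\mathfrak{m}(1)$, so the dimension count $\dim\mathfrak{m}(0)=\dim\mathfrak{m}(2)$ does not by itself make $P_L$ distinguished. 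That distinguished nilpotents are even is a separate theorem (Carter, Prop.~5.7.6/5.8.7) whose proof is not a formal consequence of the absence of trivial $\mathfrak{sl}_2$-summands, and it must be invoked or proved. Second, you verify that the two assignments compose to the identity on representatives, but you do not address injectivity on conjugacy classes: one must show that if two pairs $(L,P_L)$ and $(L',P_{L'}')$ yield $G$-conjugate Richardson elements then the pairs are themselves $G$-conjugate, which in the standard treatment uses Kostant's conjugacy of $\mathfrak{sl}_2$-triples together with the uniqueness of the minimal Levi up to conjugacy that you established in the forward direction. With those two points supplied, the argument is the standard one.
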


Under this bijection, the pair $(L,P_L)$ corresponding to a unipotent conjugacy class $\mathcal C$ is called the Bala-Carter data of $\mathcal C$, and $L$ is a minimal Levi subgroup of $\mathcal C$.

In this paper, we are mainly interested in the unipotent conjugacy classes $\mathcal C$ where there is a Levi subgroup $L$ of $G$ such that every element in $\mathcal C\cap L$ is regular in $L$. Here we recall some facts about regular unipotent elements.

\begin{itemize}
    \item The set of regular unipotent elements in $G$ forms a single conjugacy class. Its Bala-Carter data is $(G,B)$, where $B$ is a Borel subgroup. As a consequence, for every Levi subgroup $L$ of $G$, there is a unique unipotent conjugacy class $\mathcal C$ of $G$ such that every element in $\mathcal C\cap L$ is regular in $L$. The Bala-Carter data of $\mathcal C$ is $(L,B_L)$ where $B_L$ is a Borel subgroup of the derived subgroup of $L$.
    \item A unipotent element $u\in G$ is regular if and only if there is a unique Borel subgroup $B$ of $G$ such that $u\in B$.
    \item A unipotent element $u\in G$ is regular if and only if it is conjugate to an element $\prod_{\alpha\in\Phi^+}u_\alpha(a_\alpha)$, where $a_\alpha\neq 0$ whenever $\alpha$ is a simple root.
\end{itemize}

Compared to Bala-Carter theory, the conjecture we are about to prove states that the totally nonnegative monoid only intersects with ``nice'' unipotent classes, namely, the bijection above restricts to 

\begin{tabular}{l l l}
  \bigg\{ \parbox{5cm}{\textit{unipotent conjugacy classes that intersect } $G_{\geq 0}$}  \bigg\}
  &
  $\longleftrightarrow$
  &
  $\left\{\textit{$G$-conjugacy classes of Levi subgroups} \right\}.$\\
\end{tabular}\\

\subsection{Basics on Lie algebras and adjoint action} A general reference for this section is \cite{Kn}.
Let $\mathfrak g$ be the Lie algebra of $G$. Denote by $\mathfrak{g}^{\text{reg}}$ the set of all regular elements in $\mathfrak g$, and similar notation for $G^{\text{reg}}$. Let $U^+$ be the unipotent radical of $B^+$, with Lie algebra $\mathfrak u^+$; similar notations for $U^-$ and $\mathfrak u^-$. For a standard Levi subgroup $L_J$, denote its Lie algebra by $\mathfrak{l}_J$.

The exponential map $\text{Exp}:\mathfrak g\to G$ gives rise to a bijection between unipotent conjugacy classes in $G$ and the nilpotent orbits in $\mathfrak g$. Moreover, it is $G$-equivariant and sends $\mathfrak l_J^{\text{reg}}\cap u^+$ to $L_J^{\text{reg}}\cap U^+$ and $\mathfrak l_J^{\text{reg}}\cap\mathfrak u^-$ to $L_J^{\text{reg}}\cap U^-$. Thus, the nilpotent orbit that contains regular nilpotent elements in $\mathfrak l_J$ corresponds to the unipotent conjugacy class that contains regular nilpotent elements in $L_J$ under this bijection.

Finally, we review the adjoint action of some particular elements on the root vectors. Let $\Phi$ be the root system of $G$ with the set of positive roots denoted by $\Phi^+$. We fix a root group realization $\{u_\alpha:\mathbf G_a\to G\mid \alpha\in\Phi\}$ that is compatible with the chosen pinning, and a set of \emph{root vectors} $\{X_\alpha\mid \alpha\in\Phi\}$ in $\mathfrak g$. Here, we review some well-known facts about the adjoint action.

\begin{itemize} 
    \item Let $w\in W$. Denote $\dot w\in G$ to be one of its representatives. Let $X_\alpha$ be a root vector with respect to root $\alpha\in\Phi$. Then $$\text{Ad}(\dot w)X_\alpha = c_{\dot{w},\alpha}X_{w\cdot \alpha}$$ for some nonzero constant $c_{\dot{w},\alpha}$.
    
    \item Let $\alpha,\beta\in\Phi$ with $\alpha\neq-\beta$. There exist constants $c_{\alpha,\beta,i}$ such that $$\text{Ad}(u_\alpha(a))X_\beta = X_\beta+\sum_{\beta+i\alpha\in\Phi, i>0}c_{\alpha,\beta,i}a^iX_{\beta+i\alpha}.$$

    Moreover, $c_{\alpha,\beta,i}\neq 0$ whenever $\beta+i\alpha\in\Phi$.
\end{itemize}

\section{Main result}
In this section, we present the main result of this paper, which determines the unipotent conjugacy class of a unipotent element in $\mathcal U\cap G_{\geq 0}$ according to the cell decomposition in \cref{prop:cell}. Also, we prove some lemmas to reduce the problem in consideration.

\subsection{The fusion map} 
Let $I$ be the set of all simple roots. Let $U^+(w_1)U^-(w_2)$ be a cell in \cref{prop:cell}.

\begin{lemma} \label{lem:uniqueness}
 Let  $J_1,J_2$ be subsets of $I$ such that $J_1\cap J_2=\emptyset$. Then $(L_{J_1}^{\text{reg}}\cap U^+)\cdot(L_{J_2}^{\text{reg}}\cap U^-)$ lies in a single unipotent conjugacy class. 

\begin{proof}
Denote $w_{J_2}$ to be the longest element in the parabolic subgroup $W_{J_2}$. Since $J_1\cap J_2 = \emptyset$, we have that $$\dot{w}_{J_2}(L_{J_1}^{\text{reg}}\cap U^+)\cdot(L_{J_2}^{\text{reg}}\cap U^-)\dot{w}_{J_2}^{-1}\subseteq U^+.$$
Hence, are elements in this set are unipotent.

Suppose that $x_1, x_2\in L_{J_1}^{\text{reg}}\cap U^+$ and $y_1,y_2\in L_{J_2}^{\text{reg}}\cap U^-$. Let $z_1 = x_1y_1$ and $z_2 = x_2y_2$. We want to show that $z_1$ and $z_2$ are conjugate.

Since both $x_1$ and $x_2$ are regular unipotent element of $L_{J_1}$, we can find $g\in L_{J_1}$ such that $gx_1g^{-1}=x_2$. Now
$$gx_1g^{-1}\in g(L_{J_1}\cap B^+)g^{-1}\cap (L_{J_1}\cap B^+),$$
we have that $g(L_{J_1}\cap B^+)g^{-1}=L_{J_1}\cap B^+$ since a regular unipotent element lies in a unique Borel subgroup. Hence, $g\in L_{J_1}\cap B^+$. We may write $g = tu$ where $t\in T$ and $u\in U_{J_1}^+$, the unipotent radical of $L_{J_1}\cap B^+$.

Note that $u$ can be written as a product of root group elements with respect to positive roots spanned by $J_1$, while $y_1$ can be written as a product of root group elements with respect to negative roots spanned by $J_2$. Since $J_1\cap J_2=\emptyset$, a sum of such two roots is never a root. Hence, $u$ commutes with $y_1$. Then $gz_1g^{-1} = x_2(ty_1t^{-1})$. 

Then, take $t_0\in T$ such that $\alpha(t_0)=1$ for all $\alpha\in J_1$ and $\beta(t_0) = \beta(t)^{-1}$ for all $\beta\in J_2$. Again, by the assumption that $J_1\cap J_2=\emptyset$, such a $t_0$ always exists.  Then  $(gt_0)z_1(gt_0)^{-1} = x_2y_1$. 

Finally, the same technique can be applied to $x_2y_1$, namely, there is an $h\in G$ such that $h(x_2y_1)h^{-1} = z_2$, so $z_1$ is conjugate to $z_2$.
\end{proof}

\end{lemma}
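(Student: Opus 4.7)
The plan rests on two basic facts: (a) a regular unipotent element of a reductive group lies in a unique Borel subgroup, and (b) when $J_1\cap J_2=\emptyset$, a root group attached to a positive root spanned by $J_1$ commutes with any root group attached to a negative root spanned by $J_2$, since the sum of such roots would have simple-root coefficients of both signs and hence cannot be a root. Together these will let me transfer any two elements of $(L_{J_1}^{\text{reg}}\cap U^+)(L_{J_2}^{\text{reg}}\cap U^-)$ to a common element by conjugation.

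First I would verify unipotency. Let $\dot w_{J_2}$ be a representative of the longest element of $W_{J_2}$. Conjugation by $\dot w_{J_2}$ carries $L_{J_2}^{\text{reg}}\cap U^-$ into $L_{J_2}^{\text{reg}}\cap U^+\subseteq U^+$, and since $w_{J_2}$ sends $\Phi^+\setminus\Phi_{J_2}\supseteq\Phi^+_{J_1}$ into $\Phi^+$, it also sends $L_{J_1}^{\text{reg}}\cap U^+$ into $U^+$. Thus for any $x\in L_{J_1}^{\text{reg}}\cap U^+$ and $y\in L_{J_2}^{\text{reg}}\cap U^-$, the product $xy$ is conjugate to an element of $U^+$, hence unipotent.

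For the conjugacy statement, given $z_1=x_1y_1$ and $z_2=x_2y_2$, I would proceed in two symmetric stages. In the first stage, regularity provides $g\in L_{J_1}$ with $gx_1g^{-1}=x_2$; applying fact (a) inside $L_{J_1}$ forces $g$ to preserve the Borel $L_{J_1}\cap B^+$, so $g=tu$ with $t\in T$ and $u\in U_{J_1}^+$. By fact (b), $u$ commutes with $y_1$, yielding $gz_1g^{-1}=x_2(ty_1t^{-1})$. Because $J_1\cap J_2=\emptyset$, a torus element $t_0\in T$ with $\alpha(t_0)=1$ for $\alpha\in J_1$ and $\beta(t_0)=\beta(t)^{-1}$ for $\beta\in J_2$ exists; conjugating further by $t_0$ then fixes $x_2$ while the combined element $t_0t$ acts trivially on $y_1$, producing $x_2y_1$. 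The second stage mirrors the first, using $L_{J_2}$, $B^-$, and $U_{J_2}^-$ to move $y_1$ to $y_2$ while fixing $x_2$, and thereby brings the element to $x_2y_2=z_2$.

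I expect the main subtlety to be the verification that $g$ must lie in $L_{J_1}\cap B^+$: one applies the uniqueness-of-Borel property from fact (a) \emph{inside} the Levi $L_{J_1}$ rather than inside $G$, and then observes that the Borel preserved by $g$-conjugation is forced to be $L_{J_1}\cap B^+$. Everything else is essentially bookkeeping, but the disjoint-support hypothesis is used in three essentially independent places -- to give commuting root groups, to guarantee existence of the torus corrections $t_0$, and (via $w_{J_2}$) to land in $U^+$ during the unipotency check -- so the argument's correctness hinges on keeping these three uses cleanly separated.
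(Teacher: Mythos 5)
Your proposal is correct and follows essentially the same route as the paper's own proof: the unipotency check via conjugation by $\dot w_{J_2}$, the reduction of $g$ to $L_{J_1}\cap B^+$ using uniqueness of the Borel containing a regular unipotent element, the decomposition $g=tu$ with $u$ commuting with $y_1$ by disjointness of supports, the torus correction $t_0$, and the symmetric second stage. The only additions are harmless elaborations (e.g.\ spelling out why $w_{J_2}$ sends $\Phi^+_{J_1}$ into $\Phi^+$), which the paper leaves implicit.
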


\begin{corollary} \label{cor:singleConjClass}
The cell $U^+(w_1)U^-(w_2)$ lies in a single unipotent conjugacy class depending only on $J_1=\text{supp}(w_1)$ and $J_2=\text{supp}(w_2)$, not on $w_1$ and $w_2$.

\begin{proof}
Let $J_1=\text{supp}(w_1)$ and $J_2=\text{supp}(w_2)$. The element $u_{\alpha_{i_1}}(a_1)u_{\alpha_{i_2}(a_2)}\cdots u_{i_k}(a_k)$ in $U^+(w_1)$ has image $u_\alpha(\sum_{\alpha_i=\alpha}a_i)\neq e$ under the projection of $U^+$ to the simple root group $U_\alpha^+$. Hence, $U^+(w_1)\subseteq L_{J_1}^{\text{reg}}$, and similar for $U^-(w_2)$. We conclude that $U^+(w_1)\subseteq L_{J_1}^{\text{reg}}\cap U^+$ and $U^-(w_2)\subseteq L_{J_2}^{\text{reg}}\cap U^-$.
\end{proof}

\end{corollary}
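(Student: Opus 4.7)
The plan is to reduce the corollary to Lemma \ref{lem:uniqueness} by showing the two containments
\[
U^+(w_1) \subseteq L_{J_1}^{\text{reg}} \cap U^+ \qquad\text{and}\qquad U^-(w_2) \subseteq L_{J_2}^{\text{reg}} \cap U^-,
\]
and then invoking the lemma (which applies since $(w_1,w_2)\in Z$ forces $J_1\cap J_2=\emptyset$). The conjugacy class will then depend only on $(J_1,J_2)$, since that is already the case for $(L_{J_1}^{\text{reg}}\cap U^+)\cdot(L_{J_2}^{\text{reg}}\cap U^-)$ by the lemma.

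First I would verify the containment in $L_{J_1}$. Every reduced expression of $w_1$ uses only simple reflections $s_\alpha$ with $\alpha\in J_1=\text{supp}(w_1)$, so each factor $u_{\alpha_{i_j}}(a_j)$ of a typical element of $U^+(w_1)$ lies in a root subgroup $U^+_\alpha$ with $\alpha\in J_1$. Thus the whole product lies in $L_{J_1}\cap U^+$. The symmetric argument gives $U^-(w_2)\subseteq L_{J_2}\cap U^-$.

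Next I would check regularity. A standard characterization (recalled in the preliminaries) says that a unipotent $u\in L_{J_1}\cap U^+$ is regular in $L_{J_1}$ iff its image in each simple root quotient $U^+_\alpha$, $\alpha\in J_1$, is nontrivial; equivalently, $u$ is conjugate in $L_{J_1}$ to an element $\prod_{\gamma\in\Phi^+_{J_1}} u_\gamma(a_\gamma)$ with $a_\gamma\neq 0$ for each simple $\gamma$. For an element $u_{\alpha_{i_1}}(a_1)\cdots u_{\alpha_{i_n}}(a_n)\in U^+(w_1)$ with all $a_j>0$, the projection to $U^+_\alpha$ is $u_\alpha\!\bigl(\sum_{j:\,\alpha_{i_j}=\alpha}a_j\bigr)$, which is nontrivial precisely when $s_\alpha$ occurs in the reduced word, i.e.\ when $\alpha\in J_1$. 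This gives $U^+(w_1)\subseteq L_{J_1}^{\text{reg}}\cap U^+$, and the same argument works for $U^-(w_2)$.

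There is no real obstacle here: the corollary is a direct consequence of Lemma \ref{lem:uniqueness}, and the only genuine content beyond the lemma is the regularity check, which follows from the positivity of the parameters $a_j$ together with the definition of support. The substantive work of the paper begins after this, where one must identify the conjugacy class $\mathcal C_{(J_1,J_2)}$ explicitly as the regular unipotent class of a conjugate of some standard Levi $L_J$.
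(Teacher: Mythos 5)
Your proposal is correct and follows essentially the same route as the paper: establish the containments $U^+(w_1)\subseteq L_{J_1}^{\text{reg}}\cap U^+$ and $U^-(w_2)\subseteq L_{J_2}^{\text{reg}}\cap U^-$ via the support condition and the nonvanishing of the projection to each simple root subgroup (which uses positivity of the parameters), then invoke \cref{lem:uniqueness}. Your write-up is in fact slightly more explicit than the paper's about why the projection criterion certifies regularity, but the argument is the same.
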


\begin{remark} \label{equivalent} \cref{lem:uniqueness} has a Lie algebra version where $\left(L_{J_1}^{\text{reg}}\cap U^+\right)\cdot \left(L_{J_2}^{\text{reg}}\cap U^-\right)$ is replaced by $\left(\mathfrak l_{J_1}^{\text{reg}}\cap\mathfrak u^+\right)\oplus\left(\mathfrak l_{J_2}^{\text{reg}}\cap\mathfrak u^-\right)$. Moreover, the nilpotent orbit so obtained corresponds to the unipotent conjugacy class in this proposition under the bijection via the exponential map. 

\end{remark}

\begin{notation}
\begin{enumerate}
    \item Denote the unipotent conjugacy class that $\left(L_{J_1}^{\text{reg}}\cap U^+\right)\cdot \left(L_{J_2}^{\text{reg}}\cap U^-\right)$ lies in by $\mathcal C_{(J_1,J_2)}$, and the respective nilpotent orbit by $\mathcal O_{(J_1,J_2)}$.
    \item Denote the unipotent conjugacy class that the regular unipotent elements of $L_J$ lie in by $\mathcal C_J$, and the respective nilpotent orbit by $\mathcal O_J$.
\end{enumerate}
\end{notation}

The rest of this paper is devoted to proving the following theorem, confirming Lusztig's conjecture.

\begin{theorem}  \label{thm:regularity}
There is a Levi subgroup $L$ such that every element in $\mathcal C_{(J_1,J_2)}\cap L$ is regular in $L$.
\end{theorem}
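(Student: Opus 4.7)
The plan is to translate the problem to nilpotent orbits in $\mathfrak{g}$ via the exponential map, invoking \cref{equivalent} to reduce \cref{thm:regularity} to showing that the orbit $\mathcal{O}_{(J_1,J_2)}$ agrees with $\mathcal{O}_J$ for some $J \subseteq I$. Concretely, I would work with the canonical representative
\[
X_{(J_1, J_2)} \;=\; \sum_{\alpha \in J_1} X_{\alpha} \;+\; \sum_{\beta \in J_2} X_{-\beta}
\;\in\; \bigl(\mathfrak{l}_{J_1}^{\text{reg}} \cap \mathfrak{u}^+\bigr) \oplus \bigl(\mathfrak{l}_{J_2}^{\text{reg}} \cap \mathfrak{u}^-\bigr)
\]
and try to exhibit $g \in G$ such that $\mathrm{Ad}(g)X_{(J_1,J_2)} = \sum_{\gamma \in J} c_\gamma X_\gamma$ with all $c_\gamma \neq 0$, for some $J \subseteq I$. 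The allowable moves are supplied by the two bullet points at the end of Section 2: Weyl element conjugation $\dot{w}$ permutes root vectors up to nonzero scalars, while conjugation by $u_\alpha(a)$ adds nonzero multiples of $a^i X_{\beta + i\alpha}$ to $X_\beta$ whenever $\beta + i\alpha \in \Phi$. Composing such moves, one progressively absorbs the negative-root vectors indexed by $J_2$ into positive territory, eventually landing in $\mathfrak{l}_J^{\text{reg}} \cap \mathfrak{u}^+$.

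The argument naturally splits by type. In the simply-laced types $A$, $D$, $E$ (\cref{section:AD,section:E}), I would design and execute a combinatorial procedure (\cref{algorithm: weight_alg}) that assigns a ``weight'' to each simple root based on its position in the Dynkin diagram relative to $J_1$ and $J_2$; the weights single out the candidate subset $J \subseteq I$, and then one writes down an explicit sequence of $\dot{w}$- and $u_\alpha(a)$-conjugations realizing $X_{(J_1,J_2)}$ as a regular nilpotent in $\mathfrak{l}_J$. For $A$ and $D$, the verification is carried out by hand, proceeding by induction on the rank or on $|J_1 \cup J_2|$. For $E$, the verification is finite and can be confirmed with a computer check. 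For the non-simply-laced types $B$, $C$, $F_4$, $G_2$ (\cref{section:folding}), I would apply the folding method: realize $G$ as the fixed-point subgroup of a pinning-preserving diagram automorphism $\sigma$ of a simply-laced group $\widetilde{G}$, lift $(J_1, J_2)$ to a $\sigma$-stable pair $(\widetilde{J}_1, \widetilde{J}_2)$ in $\widetilde{G}$, apply the simply-laced result, and descend, checking that supports, standard Levi subgroups, and the Bala--Carter data are compatible with $\sigma$.

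The main obstacle will be designing the combinatorial algorithm so that it outputs the correct $J$ uniformly across all admissible pairs $(J_1, J_2)$, and proving that the conjugation it prescribes genuinely produces a regular element of $\mathfrak{l}_J$. The delicate point in the simply-laced analysis is to ensure that, after each successive adjoint action, none of the coefficients of the simple-root vectors of $J$ accidentally vanish; this requires a careful inductive bookkeeping governed by the weight function. By contrast, once the simply-laced cases are in hand, the folding step is relatively clean, as it reduces to checking $\sigma$-equivariance of the map $\mathcal{F}$ and compatibility of the Bala--Carter correspondence with folding.
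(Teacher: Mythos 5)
Your overall skeleton matches the paper's: pass to nilpotent orbits via \cref{equivalent}, work with the representative $X=\sum_{\alpha\in J_1}X_\alpha+\sum_{\beta\in J_2}X_{-\beta}$, treat simply-laced types directly, and obtain $B$, $C$, $F_4$, $G_2$ by folding (where, as you say, one takes $\dot J_i=\pi^{-1}(J_i)$, which is automatically $\sigma$-stable, and checks that the simply-laced output $\dot J$ can be chosen $\sigma$-stable). Your type $E$ plan and the folding step are essentially the paper's, except that the paper's type $E$ argument is carried out by hand via a finite list of ``local patterns'' (\cref{lemma:pattern}, \cref{lemma:detection}) plus induction on rank, with the computer used only to verify that the weight algorithm's output agrees with the pattern-by-pattern answer (\cref{theorem:weightthm}), not to prove \cref{thm:regularity} itself. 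Where you genuinely diverge is in types $A$ and $D$: you propose to conjugate $X$ all the way to $\sum_{\gamma\in J}c_\gamma X_\gamma$ by an explicit sequence of $\dot w$- and $u_\alpha(a)$-moves governed by the weight function, whereas the paper never conjugates $X$ in the classical types (apart from one root-group move in the very even $D$ case); instead it computes the Jordan partition of the matrix $X$ combinatorially, via a longest-path algorithm on an associated digraph (\cref{algorithm:partitionA}, \cref{algorithm:partitionD}), and then separately writes down a regular element of some $\mathfrak l_J$ with the same partition, invoking the partition classification of nilpotent orbits. The paper's route buys a clean, checkable computation in classical types at the cost of having to handle the very even partitions of type $D$ separately (two orbits share one partition); your route would sidestep that ambiguity since conjugation pins down the orbit exactly, but it shifts all the difficulty onto the unproven claim that the weight-guided conjugation never kills a needed coefficient -- precisely the ``careful inductive bookkeeping'' you acknowledge but do not supply, and which is the actual content of the proof. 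As written, your proposal is a viable plan rather than a proof: the existence and correctness of the normalizing conjugations in types $A$ and $D$, and the correctness of the weight algorithm, are asserted rather than established.
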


Here we give an overview of the proof. First, note that it is sufficient to find a Levi subalgebra $\mathfrak l$ of $\mathfrak g$ and show that $\mathcal{O}_{(J_1,J_2)}\cap\mathfrak l\subseteq\mathfrak{l}^{\text{reg}}$ since once this is completed, we may pass to the similar conclusion of unipotent conjugacy classes by the exponential map. Consider a representative $$X \defeq \sum_{\alpha\in J_1}X_\alpha+\sum_{\beta\in J_2}X_{-\beta}\in\mathcal O_{(J_1,J_2)}.$$ It is sufficient to determine the nilpotent orbit of $X$. In type $A$ and type $D$, we give an algorithm to calculate the partition of $X$ and determine the nilpotent orbit of $X$ in terms of Bala-Carter theory. In type $E$, we use a sequence of group elements to perform adjoint actions on $X$ such that we end up with a nilpotent element of the form $\sum_{\gamma\in J}X_\gamma$ for some $J\subseteq I$. In this way, we show that $\mathfrak{l}_J$ is exactly a minimal Levi subalgebra of $\mathcal O_{(J_1,J_2)}$ and $\mathcal{O}_{(J_1,J_2)}\cap\mathfrak l_J\subseteq\mathfrak{l}_J^{\text{reg}}$. We derive the result of non-simply-laced types via folding.

\begin{remark}
Unlike the cell $U^+(w_1)U^-(w_2)$, the set $\left(L_{J_1}^{\text{reg}}\cap U^+\right)\cdot \left(L_{J_2}^{\text{reg}}\cap U^-\right)$ is defined over any characteristic, and \cref{lem:uniqueness} does not depend on the characteristic. It would be an interesting question to determine whether \cref{thm:regularity} still holds in positive characteristics or not. The respective calculation will be more involved since one can no longer pass to Lie algebra.
\end{remark}

\cref{thm:regularity} allows us to give the following definition.

\begin{definition} \label{fusion}
Let $J$ be a subset of $I$ such that $L_J$ is a standard Levi subgroup such that the elements in $\mathcal C_{(J_1,J_2)}\cap L_J$ are regular in $L_J$. This gives rise to a map $$\mathcal F: \{(J_1,J_2)\mid J_1,J_2\subseteq I,J_1\cap J_2=\emptyset\}\to \{J\mid J\subseteq I\}/\sim$$ which we call the \emph{fusion map}. Here, $\sim$ is an equivalent relation on the set $\{J|J\subseteq I\}$ defined by $J\sim J^\prime$ if and only if there is $w\in W$ such that $J = w\cdot J^\prime$.
\end{definition}

\subsection{Assumption and some useful lemmas}

It is easy to show that if \cref{thm:regularity} holds for every simple factor of $G$ then it holds for $G$. It is enough to consider the Levi subgroup $L_{J_1\cup J_2}$ if $I\neq J_1\cup J_2$. In the rest of the paper, we will make the following assumption.

\begin{assumption}
$G$ is quasi-simple and $I = J_1\cup J_2$.
\end{assumption}

We prove two lemmas that will be useful in the sequel. They are stated in terms of unipotent conjugacy classes, but there are counterparts regarding nilpotent orbits.
  
  \begin{lemma}  \label{flip}
  Assume that $\mathcal C_{(J_1,J_2)} = \mathcal C_J$. Then, interchanging the roles of $J_1$ and $J_2$ does not change the corresponding unipotent conjugacy class, namely, $\mathcal C_{(J_1,J_2)} = \mathcal C_{(J_2,J_1)} = \mathcal C_J$. 
 \begin{proof}
  For a fixed pinning there is an automorphism $\phi$ of $G$ such that $\phi(u_{\alpha}(a)) = u_{-\alpha}(-a)$ for $\alpha\in I\cup -I$, $a\in\mathbb C$ and $\phi(t) = t^{-1}$ for $t\in T$. Then $$\phi\left(\prod_{\alpha\in J_1}u_{\alpha}(1)\prod_{\beta\in J_2}u_{-\beta}(1)\right)=\prod_{\alpha\in J_1}u_{-\alpha}(-1)\prod_{\beta\in J_2}u_{\beta}(-1).$$
  
  By our assumption, $\prod_{\alpha\in J_1}u_{\alpha}(1)\prod_{\beta\in J_2}u_{-\beta}(1)$ is conjugated to $\prod_{\gamma\in J}u_\gamma(1)$. Since $\phi$ is an automorphism, it preserves conjugation, so $\prod_{\alpha\in J_1}u_{-\alpha}(-1)\prod_{\beta\in J_2}u_{\beta}(-1)$ is conjugated to $\phi\left(\prod_{\gamma\in J}u(1)\right) = \prod_{\gamma\in J}u_{-\gamma}(-1)$. Two regular unipotent elements are conjugated to each other, so we know that $\prod_{\alpha\in J_1}u_{\alpha}(1)\prod_{\beta\in J_2}u_{-\beta}(1)$ is conjugated to $\prod_{\alpha\in J_1}u_{-\alpha}(-1)\prod_{\beta\in J_2}u_{\beta}(-1)$ and $\mathcal C_{(J_1,J_2)} = \mathcal C_{(J_2,J_1)} = \mathcal C_J$.

  \end{proof}
  \end{lemma}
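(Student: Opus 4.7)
The plan is to exploit the Chevalley (or opposition) involution of $G$, which interchanges positive and negative root groups and so implements the swap $J_1 \leftrightarrow J_2$ at the level of cells. Since the assumption already gives $\mathcal C_{(J_1,J_2)} = \mathcal C_J$, the real content is to prove $\mathcal C_{(J_2,J_1)} = \mathcal C_J$.

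First, I would invoke the pinning involution $\phi \colon G \to G$ determined by $\phi(u_{\pm\alpha_i}(a)) = u_{\mp\alpha_i}(-a)$ for each simple root $\alpha_i$ and $\phi(t) = t^{-1}$ for $t \in T$. Standard pinning theory guarantees that $\phi$ extends uniquely to an automorphism of $G$ satisfying $\phi(u_\alpha(a)) = u_{-\alpha}(-a)$ for every root $\alpha$. Then I would pick the convenient representative
$$x \;=\; \prod_{\alpha \in J_1} u_\alpha(1) \cdot \prod_{\beta \in J_2} u_{-\beta}(1),$$
which lies in $\mathcal C_{(J_1,J_2)}$ by \cref{cor:singleConjClass}. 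Applying $\phi$ yields $\phi(x) = \prod_{\alpha \in J_1} u_{-\alpha}(-1) \cdot \prod_{\beta \in J_2} u_{\beta}(-1)$. Because $J_1 \cap J_2 = \emptyset$, no sum of a root supported in $J_1$ with one supported in $J_2$ is a root, so the two factors of $\phi(x)$ commute; rearranging places $\phi(x)$ inside $(L_{J_2}^{\mathrm{reg}} \cap U^+) \cdot (L_{J_1}^{\mathrm{reg}} \cap U^-)$, whence $\phi(x) \in \mathcal C_{(J_2,J_1)}$ by \cref{cor:singleConjClass}.

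On the other hand, the hypothesis $\mathcal C_{(J_1,J_2)} = \mathcal C_J$ says $x$ is $G$-conjugate to the regular unipotent $y = \prod_{\gamma \in J} u_\gamma(1) \in L_J$. Since $\phi$ is an automorphism, it preserves $G$-conjugacy, so $\phi(x)$ is $G$-conjugate to $\phi(y) = \prod_{\gamma \in J} u_{-\gamma}(-1)$. The latter is again a regular unipotent element of $L_J$, now inside $U^-$, so it lies in $\mathcal C_J$ (the unique $G$-conjugacy class meeting $L_J^{\mathrm{reg}}$). Chaining the two conclusions, $\mathcal C_{(J_2,J_1)} = \mathcal C_J$.

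The one step requiring a bit of care is verifying that $\phi$ genuinely extends to an automorphism compatible with all root groups, not just the simple ones — but this is standard from the axioms of a pinning. There is no deep obstacle in this lemma; it is essentially a symmetry observation, and once the Chevalley involution is in hand the rest is immediate bookkeeping.
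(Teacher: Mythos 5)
Your proposal is correct and follows essentially the same route as the paper: both use the Chevalley involution $\phi$ with $\phi(u_\alpha(a)) = u_{-\alpha}(-a)$, apply it to the representative $\prod_{\alpha\in J_1}u_\alpha(1)\prod_{\beta\in J_2}u_{-\beta}(1)$, and use that automorphisms preserve conjugacy together with the fact that $\phi$ of a regular unipotent of $L_J$ is again one. Your extra remark that the two factors of $\phi(x)$ commute (so that $\phi(x)$ visibly lies in $(L_{J_2}^{\mathrm{reg}}\cap U^+)\cdot(L_{J_1}^{\mathrm{reg}}\cap U^-)$) is a small clarification the paper leaves implicit, but it is not a different argument.
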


   \begin{lemma} \label{DiagramAuto}
     Let $\sigma$ be a diagram automorphism of the Dynkin diagram. If $\mathcal C_{(J_1,J_2)} = \mathcal C_J$, then $\mathcal C_{(\sigma(J_1),\sigma(J_2))} = \mathcal C_{\sigma(J)}$.
     \begin{proof}
       The diagram automorphism $\sigma$ can be lifted to an automorphism $\dot{\sigma}:G\to G$, satisfying $\dot{\sigma}(u_\alpha(a)) = u_{\sigma(\alpha)}(a)$. Now if for $g\in G$, $$g\left(\prod_{\alpha\in J_1}u_{\alpha}(1)\prod_{\beta\in J_2}u_{-\beta}(1)\right)g^{-1} = \prod_{\gamma\in J}u_\gamma(1),$$
        then we can apply $\dot{\sigma}$ and get
        $$\dot{\sigma}(g)\left(\prod_{\alpha\in \sigma(J_1)}u_{\alpha}(1)\prod_{\beta\in \sigma(J_2)}u_{-\beta}(1)\right)\dot{\sigma}(g)^{-1} = \prod_{\gamma\in \sigma(J)}u_\gamma(1).$$
     \end{proof}
   \end{lemma}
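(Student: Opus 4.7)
The plan is to lift the diagram automorphism $\sigma$ to an algebraic group automorphism $\dot\sigma : G \to G$ via the chosen pinning $\underline{\mathbf P}$. By the rigidity of pinnings, there is a unique such lift satisfying $\dot\sigma(u_{\pm\alpha}(a)) = u_{\pm\sigma(\alpha)}(a)$ for every simple root $\alpha$. In particular, $\dot\sigma$ stabilizes $T$, carries $L_J$ to $L_{\sigma(J)}$, and hence sends regular unipotent elements of $L_J$ to regular unipotent elements of $L_{\sigma(J)}$; moreover, as a group homomorphism it preserves conjugation in the sense that $gxg^{-1}=y$ implies $\dot\sigma(g)\dot\sigma(x)\dot\sigma(g)^{-1} = \dot\sigma(y)$.

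To execute the argument, I would invoke \cref{cor:singleConjClass} to reduce to the concrete representative $x = \prod_{\alpha\in J_1} u_\alpha(1)\prod_{\beta\in J_2} u_{-\beta}(1)$ of $\mathcal C_{(J_1,J_2)}$. The hypothesis $\mathcal C_{(J_1,J_2)} = \mathcal C_J$ then provides some $g\in G$ with $gxg^{-1} = y$, where $y = \prod_{\gamma\in J} u_\gamma(1)$ is a standard regular unipotent in $L_J$. Applying $\dot\sigma$ to this identity and using the above formulas rewrites the left-hand side as
\[
\dot\sigma(g)\Bigl(\prod_{\alpha\in\sigma(J_1)} u_\alpha(1)\prod_{\beta\in\sigma(J_2)} u_{-\beta}(1)\Bigr)\dot\sigma(g)^{-1},
\]
whose conjugated factor is a representative of $\mathcal C_{(\sigma(J_1),\sigma(J_2))}$, and rewrites the right-hand side as $\prod_{\gamma\in\sigma(J)} u_\gamma(1)$, which is regular unipotent in $L_{\sigma(J)}$. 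The desired equality $\mathcal C_{(\sigma(J_1),\sigma(J_2))} = \mathcal C_{\sigma(J)}$ follows immediately.

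There is no serious obstacle here: the proof is essentially formal. The one substantive input is the rigidity of the pinning, which elevates the combinatorial symmetry $\sigma$ of the Dynkin diagram to an algebraic group automorphism interacting correctly with both the positive and negative simple root subgroups. Once the lift is in hand, functoriality of conjugation under homomorphisms does all the remaining work; in spirit this is the same mechanism as in \cref{flip}, where an involutive pinned automorphism was used to swap the roles of $J_1$ and $J_2$.
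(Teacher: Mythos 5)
Your proposal is correct and follows essentially the same route as the paper: lift $\sigma$ to a pinned automorphism $\dot\sigma$ with $\dot\sigma(u_{\pm\alpha}(a)) = u_{\pm\sigma(\alpha)}(a)$, apply it to the conjugation identity for the representative $\prod_{\alpha\in J_1}u_\alpha(1)\prod_{\beta\in J_2}u_{-\beta}(1)$, and conclude. The extra remarks on rigidity of the pinning and the reduction via \cref{cor:singleConjClass} only make explicit what the paper leaves implicit.
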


We postpone the proof of \cref{thm:regularity} after \cref{section:weight}.

\section{The weight algorithm}\label{section:weight}
    In this section, we provide a uniform algorithm to calculate the image of the fusion map \ref{fusion} for simply-laced types. The algorithm is graph-theoretic so we will not distinguish vertices of the Dynkin diagram and roots.


First, we introduce the set-ups. 

\begin{definition} Consider a simply-laced Dynkin diagram $\Gamma$ and let $I$ be the set of vertices. Let $J_1$ and $J_2$ be two disjoint subsets of $I$. We slightly abuse notations and use $K$ to indicate the Dynkin subdiagram with vertices set $K\subseteq I$. 
\begin{enumerate}
    \item For each vertex $v\in I$ we attach a label $+$ (resp. $-$) if $v\in J_1$ (resp. $J_2$), and obtain the \emph{labeled Dynkin diagram} ($\Gamma,J_1,J_2$).
    \item A \emph{chunk} in a labeled Dynkin diagram is a connected component $C$ of $J_1$ or $J_2$, which is again a Dynkin diagram.
    \item The \emph{rank} of a chunk $C$ is the number of vertices in $C$.
    \item The \emph{type} of a chunk is its type as a Dynkin diagram.
\end{enumerate}
    
\end{definition}  

Now fix a labeled Dynkin diagram ($\Gamma,J_1,J_2$) of type $X$. For a chunk of type $Y$ and of rank $r$, we assign a number to the chunk called \emph{weight} given in the following table. Here, we omit the labels of chunks as the weights are independent of the labels.

\begin{center}
\begin{tabular}{|c|c|c|c|c|}
\hline
  $X$ & $Y$ & $r$ & Weight & Notation \\
    \hline
     $A$ & $A_n$ & $n$ & $n+1$ & NA \\
     \hline
     \multirow{3}*{$D$} &$A_n$ & $n$ & $n+1$ & NA \\
     \cline{2-5}
     &$D_n$ & $n\geq 4$ & $2n-1$ & NA \\
     \cline{2-5}
     &$\dynkin[edge length=.5cm,upside down] D{x***}$ & $3$ & $5$ & $D_3$ \\
     \hline
     \multirow{6}*{$E$}&$A_n$ & $n$ & $n+1$ & NA \\
     \cline{2-5}
    & $D_n$ & $n\geq 4$ & $2n-1$ & NA \\
     \cline{2-5}
    & $E$ & $6,7,8$ & $\infty$ & NA \\
     \cline{2-5}
   & $\dynkin[edge length=.5cm, ] D{x**x}$ & $2$ & $3$ & $A^b$\\
     \cline{2-5}
    & $\dynkin[edge length=.5cm] D{x**x*}$ & $3$ & $4$ & $A^\#$\\
     \cline{2-5}
   & $\dynkin[edge length=.5cm,upside down] E{****xxxx}$ & $4$ & $5$ & $A^\#$\\
     \hline
\end{tabular}
\end{center}

The symbol $\times$ instead of $\bullet$ here indicates that the vertices lie in the parent diagram but do not lie in the chunk itself. Note that three kinds of chunks deserve special notation. The name $D_3$ suggests that it has weight $5$ instead of $4$ as usual $A_3$ chunk. The names $A^b$ and $A^\#$ suggest some priority in the algorithm, which will be clear in the \cref{algorithm: weight_alg}

We need more terminology in the algorithm. 

\begin{definition} In the setting above, 
    \begin{enumerate}
    \item  a \emph{dominant chunk} is a chunk with the largest weight among all chunks, with the convention that $\infty$ is larger than any integer;
        \item a \emph{selection} $P$ of dominant chunks is a collection of mutually disjoint dominant chunks, i.e., the union of any two dominant chunks is not a connected Dynkin diagram as a subset of $I$;
        \item if $\Gamma$ is of type $D$ or $E$, the unique vertex that is connected to three vertices is called the \emph{branching vertex};
        \item a vertex $v\in \Gamma$ is an \emph{extra vertex} to a selection $P$ if
        \begin{itemize}
            \item $v$ is not in any chunks in $P$, and
            \item $v$ is only connected to the branching vertex that is contained in some dominant chunk of $P$.
        \end{itemize}
    \end{enumerate}
\end{definition}

We now introduce the \emph{weight algorithm}.

\begin{algorithm}
\label{algorithm: weight_alg}
Let ($\Gamma,J_1,J_2$) be a labeled Dynkin diagram.
We will do the following procedure to find one dominant selection $P$ of $\Gamma$ according to the following priority.

\begin{enumerate}[label=\textbf{Step \arabic*}.]
    \item $\mathcal{P}_1\defeq$\{the selections $P$ with maximal number of dominant chunks\}. If $\mathcal P_1$ contains only one element, then note down this element and jump to Step 7, else continue.
    \item $\mathcal{P}_2\defeq$\{the selections $P\in \mathcal{P}_1$ with maximal number of type $A$ chunks\}. If $\mathcal P_2$ contains only one element, then note down this element and jump to Step 7, else continue.
    \item $\mathcal{P}_3\defeq$\{the selections $P\in\mathcal{P}_2$ that are not adjacent to any extra vertices, where adjacent means that some dominant chunks in $P$ are connected to an extra vertex\}. If $\mathcal{P}_3=\emptyset$, then $\mathcal{P}_3\defeq\mathcal{P}_2$. If $\mathcal P_3$ contains only one element, then note down this element and jump to Step 7, else continue.
    \item $\mathcal{P}_4\defeq$\{the selections $P\in\mathcal{P}_3$ that has dominant chunks $A^\#$\}. If $\mathcal{P}_4=\emptyset$, then $\mathcal{P}_4\defeq\mathcal{P}_3$. If $\mathcal P_4$ contains only one element, then note down this element and jump to Step 7, else continue.
    \item $\mathcal{P}_5\defeq$\{the selections $P\in\mathcal{P}_4$ that does not have dominant chunks $A^b$\}. If $\mathcal{P}_5=\emptyset$, then $\mathcal{P}_5\defeq\mathcal{P}_4$. If $\mathcal P_5$ contains only one element, then note down this element and jump to Step 7, else continue.
    \item Note down any selection in $\mathcal{P}_5$.
    \item Let $P$ be the dominant selection noted down in Steps 1-6 of $\Gamma$. Then remove the label of all vertices adjacent to some chunks in $\Gamma$, call it $\Gamma^\prime$ and change the labeling of selected chunks to $J$. Go to Step 1 again for each connected component of $\Gamma^\prime$, until all the vertices are either labeled $J$ or not labeled.
\end{enumerate}

\end{algorithm}

We provide four examples covering tie-breaking cases.

\begin{example} \label{AlgorithmExample}

Consider type $D_{16}$ with $J_1=\{2,3,4,5,10,11,12,13\}$ labeled as ``$+$'' in the Dynkin diagram, $J_2=\{1,6,7,8,9,14,15,16\}$ labeled as ``$-$'' in the Dynkin diagram.
\begin{center}
         \begin{tabular}{c p{6cm}}
              \tikz[scale=0.5,decoration={markings, mark= at position 1.5 with {\arrow{stealth}}},baseline=(current bounding box.center),every circle node/.style={draw}]{
                \draw[fill=black] (-7,0) circle (3pt) node[anchor=south] {$-$} node[anchor=north] {$1$};
                \draw[fill=black] (-6,0) circle (3pt) node[anchor=south] {$+$} node[anchor=north] {$2$};
                \draw[fill=black] (-5,0) circle (3pt) node[anchor=south] {$+$} node[anchor=north] {$3$};
                \draw[fill=black] (-4,0) circle (3pt) node[anchor=south] {$+$} node[anchor=north] {$4$};
                \draw[fill=black] (-3,0) circle (3pt) node[anchor=south] {$+$} node[anchor=north] {$5$};
                \draw[fill=black] (-2,0) circle (3pt) node[anchor=south] {$-$} node[anchor=north] {$6$};
                \draw[fill=black] (-1,0) circle (3pt) node[anchor=south] {$-$} node[anchor=north] {$7$};
                \draw[fill=black] (0,0) circle (3pt) node[anchor=south] {$-$} node[anchor=north] {$8$};
                \draw[fill=black] (1,0) circle (3pt) node[anchor=south] {$-$} node[anchor=north] {$9$};
                \draw[fill=black] (2,0) circle (3pt) node[anchor=south] {$+$} node[anchor=north] {$10$};
                \draw[fill=black] (3,0) circle (3pt) node[anchor=south] {$+$} node[anchor=north] {$11$};
                \draw[fill=black] (4,0) circle (3pt) node[anchor=south] {$+$} node[anchor=north] {$12$};
                \draw[fill=black] (5,0) circle (3pt) node[anchor=south] {$+$} node[anchor=north] {$13$};
                \draw[fill=black] (6,0) circle (3pt) node[anchor=south] {$-$} node[anchor=north] {$14$};
                \draw[fill=black] (7,1) circle (3pt) node[anchor=south] {$-$} node[anchor=north] {$15$};
                \draw[fill=black] (7,-1) circle (3pt) node[anchor=south] {$-$} node[anchor=north] {$16$};
                \draw (-7,0) -- (-6,0);
                \draw (-6,0) -- (-5,0);
                \draw (-5,0) -- (-4,0);
                \draw (-4,0) -- (-3,0);
                \draw (-3,0) -- (-2,0);
                \draw (-2,0) -- (-1,0);
                \draw (-1,0) -- (0,0);
                \draw (0,0) -- (1,0);
                \draw (1,0) -- (2,0);
                \draw (2,0) -- (3,0);
                \draw (3,0) -- (4,0);
                \draw (4,0) -- (5,0);
                \draw (5,0) -- (6,0);
                \draw (6,0) -- (7,1);
                \draw (6,0) -- (7,-1);
                \draw[black,thick,dotted] ($(-6.5,-1)$)  rectangle ($(-2.5,1)$);
                \draw[black,thick,dotted] ($(1.5,-1)$)  rectangle ($(5.5,1)$);
            }
               & choose the selection with maximal number of type $A$ chunks (Step 2)\\
               
               \tikz[baseline=(current bounding box.center)]{\draw[-{Stealth[black]}] (0,0.4)   -- (0,-0.4);} & {adjacent vertices=$\{1,6,9,14\}$ (Step 7)} \\
               \tikz[scale=0.5,decoration={markings, mark= at position 1.5 with {\arrow{stealth}}},baseline=(current bounding box.center),every circle node/.style={draw}]{
                \draw[fill=black] (-7,0) circle (3pt) node[anchor=south] {} node[anchor=north] {$1$};
                \draw[fill=black] (-6,0) circle (3pt) node[anchor=south] {$J$} node[anchor=north] {$2$};
                \draw[fill=black] (-5,0) circle (3pt) node[anchor=south] {$J$} node[anchor=north] {$3$};
                \draw[fill=black] (-4,0) circle (3pt) node[anchor=south] {$J$} node[anchor=north] {$4$};
                \draw[fill=black] (-3,0) circle (3pt) node[anchor=south] {$J$} node[anchor=north] {$5$};
                \draw[fill=black] (-2,0) circle (3pt) node[anchor=south] {} node[anchor=north] {$6$};
                \draw[fill=black] (-1,0) circle (3pt) node[anchor=south] {$-$} node[anchor=north] {$7$};
                \draw[fill=black] (0,0) circle (3pt) node[anchor=south] {$-$} node[anchor=north] {$8$};
                \draw[fill=black] (1,0) circle (3pt) node[anchor=south] {} node[anchor=north] {$9$};
                \draw[fill=black] (2,0) circle (3pt) node[anchor=south] {$J$} node[anchor=north] {$10$};
                \draw[fill=black] (3,0) circle (3pt) node[anchor=south] {$J$} node[anchor=north] {$11$};
                \draw[fill=black] (4,0) circle (3pt) node[anchor=south] {$J$} node[anchor=north] {$12$};
                \draw[fill=black] (5,0) circle (3pt) node[anchor=south] {$J$} node[anchor=north] {$13$};
                \draw[fill=black] (6,0) circle (3pt) node[anchor=south] {} node[anchor=north] {$14$};
                \draw[fill=black] (7,1) circle (3pt) node[anchor=south] {$-$} node[anchor=north] {$15$};
                \draw[fill=black] (7,-1) circle (3pt) node[anchor=south] {$-$} node[anchor=north] {$16$};
                \draw (-7,0) -- (-6,0);
                \draw (-6,0) -- (-5,0);
                \draw (-5,0) -- (-4,0);
                \draw (-4,0) -- (-3,0);
                \draw (-3,0) -- (-2,0);
                \draw (-2,0) -- (-1,0);
                \draw (-1,0) -- (0,0);
                \draw (0,0) -- (1,0);
                \draw (1,0) -- (2,0);
                \draw (2,0) -- (3,0);
                \draw (3,0) -- (4,0);
                \draw (4,0) -- (5,0);
                \draw (5,0) -- (6,0);
                \draw (6,0) -- (7,1);
                \draw (6,0) -- (7,-1);
                \draw[black,thick,dotted] ($(-1.5,-1)$)  rectangle ($(0.5,1)$);
                \draw[black,thick,dotted] ($(6.5,0)$)  rectangle ($(7.5,2)$);
                \draw[black,thick,dotted] ($(6.5,-2)$)  rectangle ($(7.5,0)$);
            }

               & choose the selection with the maximal number of dominant chunks (Step 1) \\
               
               \tikz[baseline=(current bounding box.center)]{\draw[-{Stealth[black]}] (0,0.4)   -- (0,-0.4);} & adjacent vertex=$\emptyset$ (Step 7)\\
               
               \tikz[scale=0.5,decoration={markings, mark= at position 1.5 with {\arrow{stealth}}},baseline=(current bounding box.center),every circle node/.style={draw}]{
                \draw[fill=black] (-7,0) circle (3pt) node[anchor=south] {} node[anchor=north] {$1$};
                \draw[fill=black] (-6,0) circle (3pt) node[anchor=south] {$J$} node[anchor=north] {$2$};
                \draw[fill=black] (-5,0) circle (3pt) node[anchor=south] {$J$} node[anchor=north] {$3$};
                \draw[fill=black] (-4,0) circle (3pt) node[anchor=south] {$J$} node[anchor=north] {$4$};
                \draw[fill=black] (-3,0) circle (3pt) node[anchor=south] {$J$} node[anchor=north] {$5$};
                \draw[fill=black] (-2,0) circle (3pt) node[anchor=south] {} node[anchor=north] {$6$};
                \draw[fill=black] (-1,0) circle (3pt) node[anchor=south] {$J$} node[anchor=north] {$7$};
                \draw[fill=black] (0,0) circle (3pt) node[anchor=south] {$J$} node[anchor=north] {$8$};
                \draw[fill=black] (1,0) circle (3pt) node[anchor=south] {} node[anchor=north] {$9$};
                \draw[fill=black] (2,0) circle (3pt) node[anchor=south] {$J$} node[anchor=north] {$10$};
                \draw[fill=black] (3,0) circle (3pt) node[anchor=south] {$J$} node[anchor=north] {$11$};
                \draw[fill=black] (4,0) circle (3pt) node[anchor=south] {$J$} node[anchor=north] {$12$};
                \draw[fill=black] (5,0) circle (3pt) node[anchor=south] {$J$} node[anchor=north] {$13$};
                \draw[fill=black] (6,0) circle (3pt) node[anchor=south] {} node[anchor=north] {$14$};
                \draw[fill=black] (7,1) circle (3pt) node[anchor=south] {$J$} node[anchor=north] {$15$};
                \draw[fill=black] (7,-1) circle (3pt) node[anchor=south] {$J$} node[anchor=north] {$16$};
                \draw (-7,0) -- (-6,0);
                \draw (-6,0) -- (-5,0);
                \draw (-5,0) -- (-4,0);
                \draw (-4,0) -- (-3,0);
                \draw (-3,0) -- (-2,0);
                \draw (-2,0) -- (-1,0);
                \draw (-1,0) -- (0,0);
                \draw (0,0) -- (1,0);
                \draw (1,0) -- (2,0);
                \draw (2,0) -- (3,0);
                \draw (3,0) -- (4,0);
                \draw (4,0) -- (5,0);
                \draw (5,0) -- (6,0);
                \draw (6,0) -- (7,1);
                \draw (6,0) -- (7,-1);
            }
            & the result \\
            
          \end{tabular}
\end{center}
  The algorithm produces $J=\{2,3,4,5,7,8,10,11,12,13,15,16\}$.

\end{example}

\begin{example}

Consider type $D_{14}$ with $J_1=\{1,2,5,6,9,12,13\}$ labeled as ``$+$'' in the Dynkin diagram, $J_2=\{3,4,7,8,10,11,14\}$ labeled as ``$-$'' in the Dynkin diagram.

\begin{center}
    \begin{tabular}{c p{6cm}}
          \tikz[scale=0.5,decoration={markings, mark= at position 1.5 with {\arrow{stealth}}},baseline=(current bounding box.center),every circle node/.style={draw}]{
            \draw[fill=black] (-7,0) circle (3pt) node[anchor=south] {$+$} node[anchor=north] {$1$};
            \draw[fill=black] (-6,0) circle (3pt) node[anchor=south] {$+$} node[anchor=north] {$2$};
            \draw[fill=black] (-5,0) circle (3pt) node[anchor=south] {$-$} node[anchor=north] {$3$};
            \draw[fill=black] (-4,0) circle (3pt) node[anchor=south] {$-$} node[anchor=north] {$4$};
            \draw[fill=black] (-3,0) circle (3pt) node[anchor=south] {$+$} node[anchor=north] {$5$};
            \draw[fill=black] (-2,0) circle (3pt) node[anchor=south] {$+$} node[anchor=north] {$6$};
            \draw[fill=black] (-1,0) circle (3pt) node[anchor=south] {$-$} node[anchor=north] {$7$};
            \draw[fill=black] (0,0) circle (3pt) node[anchor=south] {$-$} node[anchor=north] {$8$};
            \draw[fill=black] (1,0) circle (3pt) node[anchor=south] {$+$} node[anchor=north] {$9$};
            \draw[fill=black] (2,0) circle (3pt) node[anchor=south] {$-$} node[anchor=north] {$10$};
            \draw[fill=black] (3,0) circle (3pt) node[anchor=south] {$-$} node[anchor=north] {$11$};
            \draw[fill=black] (4,0) circle (3pt) node[anchor=south] {$+$} node[anchor=north] {$12$};
            \draw[fill=black] (5,1) circle (3pt) node[anchor=south] {$+$} node[anchor=north] {$13$};
            \draw[fill=black] (5,-1) circle (3pt) node[anchor=south] {$-$} node[anchor=north] {$14$};
            \draw (-7,0) -- (-6,0);
            \draw (-6,0) -- (-5,0);
            \draw (-5,0) -- (-4,0);
            \draw (-4,0) -- (-3,0);
            \draw (-3,0) -- (-2,0);
            \draw (-2,0) -- (-1,0);
            \draw (-1,0) -- (0,0);
            \draw (0,0) -- (1,0);
            \draw (1,0) -- (2,0);
            \draw (2,0) -- (3,0);
            \draw (3,0) -- (4,0);
            \draw (4,0) -- (5,1);
            \draw (4,0) -- (5,-1);
            \draw[black,thick,dotted] ($(-7.5,-1)$)  rectangle ($(-5.5,1)$);
            \draw[black,thick,dotted] ($(-3.5,-1)$)  rectangle ($(-1.5,1)$);
            \draw[black,thick,dotted] ($(1.5,-1)$)  rectangle ($(3.5,1)$);
        }
           & choose the selection not adjacent to the extra vertex $14$ (Step 3)\\
           
           \tikz[baseline=(current bounding box.center)]{\draw[-{Stealth[black]}] (0,0.4)   -- (0,-0.4);} & {adjacent vertices=$\{3,4,7,9,12\}$ (Step 7)} \\
           
           \tikz[scale=0.5,decoration={markings, mark= at position 1.5 with {\arrow{stealth}}},baseline=(current bounding box.center),every circle node/.style={draw}]{
            \draw[fill=black] (-7,0) circle (3pt) node[anchor=south] {$J$} node[anchor=north] {$1$};
            \draw[fill=black] (-6,0) circle (3pt) node[anchor=south] {$J$} node[anchor=north] {$2$};
            \draw[fill=black] (-5,0) circle (3pt) node[anchor=south] {} node[anchor=north] {$3$};
            \draw[fill=black] (-4,0) circle (3pt) node[anchor=south] {} node[anchor=north] {$4$};
            \draw[fill=black] (-3,0) circle (3pt) node[anchor=south] {$J$} node[anchor=north] {$5$};
            \draw[fill=black] (-2,0) circle (3pt) node[anchor=south] {$J$} node[anchor=north] {$6$};
            \draw[fill=black] (-1,0) circle (3pt) node[anchor=south] {} node[anchor=north] {$7$};
            \draw[fill=black] (0,0) circle (3pt) node[anchor=south] {$-$} node[anchor=north] {$8$};
            \draw[fill=black] (1,0) circle (3pt) node[anchor=south] {} node[anchor=north] {$9$};
            \draw[fill=black] (2,0) circle (3pt) node[anchor=south] {$J$} node[anchor=north] {$10$};
            \draw[fill=black] (3,0) circle (3pt) node[anchor=south] {$J$} node[anchor=north] {$11$};
            \draw[fill=black] (4,0) circle (3pt) node[anchor=south] {} node[anchor=north] {$12$};
            \draw[fill=black] (5,1) circle (3pt) node[anchor=south] {$+$} node[anchor=north] {$13$};
            \draw[fill=black] (5,-1) circle (3pt) node[anchor=south] {$-$} node[anchor=north] {$14$};
            \draw (-7,0) -- (-6,0);
            \draw (-6,0) -- (-5,0);
            \draw (-5,0) -- (-4,0);
            \draw (-4,0) -- (-3,0);
            \draw (-3,0) -- (-2,0);
            \draw (-2,0) -- (-1,0);
            \draw (-1,0) -- (0,0);
            \draw (0,0) -- (1,0);
            \draw (1,0) -- (2,0);
            \draw (2,0) -- (3,0);
            \draw (3,0) -- (4,0);
            \draw (4,0) -- (5,1);
            \draw (4,0) -- (5,-1);
            \draw[black,thick,dotted] ($(-0.5,-1)$)  rectangle ($(0.5,1)$);
            \draw[black,thick,dotted] ($(4.5,0)$)  rectangle ($(5.5,2)$);
            \draw[black,thick,dotted] ($(4.5,-2)$)  rectangle ($(5.5,0)$);
        }
           & choose the selection with the maximal number of dominant chunks (Step 1) \\
           
           \tikz[baseline=(current bounding box.center)]{\draw[-{Stealth[black]}] (0,0.4)   -- (0,-0.4);} & adjacent vertices=$\emptyset$ (Step 7)\\
           
           \tikz[scale=0.5,decoration={markings, mark= at position 1.5 with {\arrow{stealth}}},baseline=(current bounding box.center),every circle node/.style={draw}]{
            \draw[fill=black] (-7,0) circle (3pt) node[anchor=south] {$J$} node[anchor=north] {$1$};
            \draw[fill=black] (-6,0) circle (3pt) node[anchor=south] {$J$} node[anchor=north] {$2$};
            \draw[fill=black] (-5,0) circle (3pt) node[anchor=south] {} node[anchor=north] {$3$};
            \draw[fill=black] (-4,0) circle (3pt) node[anchor=south] {} node[anchor=north] {$4$};
            \draw[fill=black] (-3,0) circle (3pt) node[anchor=south] {$J$} node[anchor=north] {$5$};
            \draw[fill=black] (-2,0) circle (3pt) node[anchor=south] {$J$} node[anchor=north] {$6$};
            \draw[fill=black] (-1,0) circle (3pt) node[anchor=south] {} node[anchor=north] {$7$};
            \draw[fill=black] (0,0) circle (3pt) node[anchor=south] {$J$} node[anchor=north] {$8$};
            \draw[fill=black] (1,0) circle (3pt) node[anchor=south] {} node[anchor=north] {$9$};
            \draw[fill=black] (2,0) circle (3pt) node[anchor=south] {$J$} node[anchor=north] {$10$};
            \draw[fill=black] (3,0) circle (3pt) node[anchor=south] {$J$} node[anchor=north] {$11$};
            \draw[fill=black] (4,0) circle (3pt) node[anchor=south] {} node[anchor=north] {$12$};
            \draw[fill=black] (5,1) circle (3pt) node[anchor=south] {$J$} node[anchor=north] {$13$};
            \draw[fill=black] (5,-1) circle (3pt) node[anchor=south] {$J$} node[anchor=north] {$14$};
            \draw (-7,0) -- (-6,0);
            \draw (-6,0) -- (-5,0);
            \draw (-5,0) -- (-4,0);
            \draw (-4,0) -- (-3,0);
            \draw (-3,0) -- (-2,0);
            \draw (-2,0) -- (-1,0);
            \draw (-1,0) -- (0,0);
            \draw (0,0) -- (1,0);
            \draw (1,0) -- (2,0);
            \draw (2,0) -- (3,0);
            \draw (3,0) -- (4,0);
            \draw (4,0) -- (5,1);
            \draw (4,0) -- (5,-1);
        }
        & the result  \\
      \end{tabular}
    \end{center}

  The algorithm produces $J=\{1,2,5,6,8,10,11,13,14\}$.

\end{example}

\begin{example}

Consider type $E_8$ with $J_1=\{1,3,5,6\}$ labeled as ``$+$'' in the Dynkin diagram, $J_2=\{2,4,7,8\}$ labeled as ``$-$'' in the Dynkin diagram.

\begin{center}
        \begin{tabular}{c p{8cm}}
             \tikz[scale=0.5,decoration={markings, mark= at position 1.5 with {\arrow{stealth}}},baseline=(current bounding box.center),every circle node/.style={draw}]{
                \draw[fill=black] (-7,0) circle (3pt) node[anchor=south] {$+$} node[anchor=north] {$1$};
                \draw[fill=black] (-6,0) circle (3pt) node[anchor=south] {$+$} node[anchor=north] {$3$};
                \draw[fill=black] (-5,0) circle (3pt) node[anchor=south] {$+$} node[anchor=north east] {$4$};
                \draw[fill=black] (-4,0) circle (3pt) node[anchor=south] {$-$} node[anchor=north] {$5$};
                \draw[fill=black] (-3,0) circle (3pt) node[anchor=south] {$-$} node[anchor=north] {$6$};
                \draw[fill=black] (-2,0) circle (3pt) node[anchor=south] {$-$} node[anchor=north] {$7$};
                \draw[fill=black] (-1,0) circle (3pt) node[anchor=south] {$-$} node[anchor=north] {$8$};
                \draw[fill=black] (-5,-1.5) circle (3pt) node[anchor=north] {$+$} node[anchor=east] {$2$};
                \draw (-7,0) -- (-6,0);
                \draw (-6,0) -- (-5,0);
                \draw (-5,0) -- (-4,0);
                \draw (-4,0) -- (-3,0);
                \draw (-3,0) -- (-2,0);
                \draw (-2,0) -- (-1,0);
                \draw (-5,0) -- (-5,-1.5);
                \draw[black,thick,dotted] ($(-7.5,-2.5)$)  rectangle ($(-4.5,1)$);
            }
               & choose the selection with $A^\#$ (Step 4) \\
               
               \tikz[baseline=(current bounding box.center)]{\draw[-{Stealth[black]}] (0,0.4)   -- (0,-0.4);} & {adjacent vertex=$\{5\}$ (Step 7)} \\
               
               \tikz[scale=0.5,decoration={markings, mark= at position 1.5 with {\arrow{stealth}}},baseline=(current bounding box.center),every circle node/.style={draw}]{
                \draw[fill=black] (-7,0) circle (3pt) node[anchor=south] {$J$} node[anchor=north] {$1$};
                \draw[fill=black] (-6,0) circle (3pt) node[anchor=south] {$J$} node[anchor=north] {$3$};
                \draw[fill=black] (-5,0) circle (3pt) node[anchor=south] {$J$} node[anchor=north east] {$4$};
                \draw[fill=black] (-4,0) circle (3pt) node[anchor=south] {} node[anchor=north] {$5$};
                \draw[fill=black] (-3,0) circle (3pt) node[anchor=south] {$-$} node[anchor=north] {$6$};
                \draw[fill=black] (-2,0) circle (3pt) node[anchor=south] {$-$} node[anchor=north] {$7$};
                \draw[fill=black] (-1,0) circle (3pt) node[anchor=south] {$-$} node[anchor=north] {$8$};
                \draw[fill=black] (-5,-1.5) circle (3pt) node[anchor=north] {$J$} node[anchor=east] {$2$};
                \draw (-7,0) -- (-6,0);
                \draw (-6,0) -- (-5,0);
                \draw (-5,0) -- (-4,0);
                \draw (-4,0) -- (-3,0);
                \draw (-3,0) -- (-2,0);
                \draw (-2,0) -- (-1,0);
                \draw (-5,0) -- (-5,-1.5);
                \draw[black,thick,dotted] ($(-3.5,-1)$)  rectangle ($(-0.5,1)$);
            }
               & choose the selection with the maximal number of dominant chunks (Step 1)\\
               
               \tikz[baseline=(current bounding box.center)]{\draw[-{Stealth[black]}] (0,0.4)   -- (0,-0.4);} & adjacent vertex=$\emptyset$ (Step 7) \\

                \tikz[scale=0.5,decoration={markings, mark= at position 1.5 with {\arrow{stealth}}},baseline=(current bounding box.center),every circle node/.style={draw}]{
                \draw[fill=black] (-7,0) circle (3pt) node[anchor=south] {$J$} node[anchor=north] {$1$};
                \draw[fill=black] (-6,0) circle (3pt) node[anchor=south] {$J$} node[anchor=north] {$3$};
                \draw[fill=black] (-5,0) circle (3pt) node[anchor=south] {$J$} node[anchor=north east] {$4$};
                \draw[fill=black] (-4,0) circle (3pt) node[anchor=south] {} node[anchor=north] {$5$};
                \draw[fill=black] (-3,0) circle (3pt) node[anchor=south] {$J$} node[anchor=north] {$6$};
                \draw[fill=black] (-2,0) circle (3pt) node[anchor=south] {$J$} node[anchor=north] {$7$};
                \draw[fill=black] (-1,0) circle (3pt) node[anchor=south] {$J$} node[anchor=north] {$8$};
                \draw[fill=black] (-5,-1.5) circle (3pt) node[anchor=north] {$J$} node[anchor=east] {$2$};
                \draw (-7,0) -- (-6,0);
                \draw (-6,0) -- (-5,0);
                \draw (-5,0) -- (-4,0);
                \draw (-4,0) -- (-3,0);
                \draw (-3,0) -- (-2,0);
                \draw (-2,0) -- (-1,0);
                \draw (-5,0) -- (-5,-1.5);
                }
                
                & the result\\

          \end{tabular}
    \end{center}
    The algorithm produces $J=\{1,2,3,4,6,7,8\}$.

\end{example}

\begin{example}
   
  Consider type $E_8$ with $J_1=\{1,2,3,4\}$ labeled as ``$+$'' in the Dynkin diagram, $J_2=\{5,6,7,8\}$ labeled as ``$-$'' in the Dynkin diagram.
    \begin{center}
        \begin{tabular}{c p{8cm}}
         \tikz[scale=0.5,decoration={markings, mark= at position 1.5 with {\arrow{stealth}}},baseline=(current bounding box.center),every circle node/.style={draw}]{
            \draw[fill=black] (-7,0) circle (3pt) node[anchor=south] {$+$} node[anchor=north] {$1$};
            \draw[fill=black] (-6,0) circle (3pt) node[anchor=south] {$+$} node[anchor=north] {$3$};
            \draw[fill=black] (-5,0) circle (3pt) node[anchor=south] {$-$} node[anchor=north west] {$4$};
            \draw[fill=black] (-4,0) circle (3pt) node[anchor=south] {$+$} node[anchor=north] {$5$};
            \draw[fill=black] (-3,0) circle (3pt) node[anchor=south] {$+$} node[anchor=north] {$6$};
            \draw[fill=black] (-2,0) circle (3pt) node[anchor=south] {$-$} node[anchor=north] {$7$};
            \draw[fill=black] (-1,0) circle (3pt) node[anchor=south] {$-$} node[anchor=north] {$8$};
            \draw[fill=black] (-5,-1) circle (3pt) node[anchor=north] {$-$} node[anchor=east] {$2$};
            \draw (-7,0) -- (-6,0);
            \draw (-6,0) -- (-5,0);
            \draw (-5,0) -- (-4,0);
            \draw (-4,0) -- (-3,0);
            \draw (-3,0) -- (-2,0);
            \draw (-2,0) -- (-1,0);
            \draw (-5,0) -- (-5,-1);
            \draw[black,thick,dotted] ($(-7.5,-1)$)  rectangle ($(-5.5,1)$);
            \draw[black,thick,dotted] ($(-4.3,-1)$)  rectangle ($(-2.5,1)$);
        }
           & choose the selection without $A^b$ (Step 5) \\
           
           \tikz[baseline=(current bounding box.center)]{\draw[-{Stealth[black]}] (0,0.4)   -- (0,-0.4);} & {adjacent vertices=$\{4,7\}$ (Step 1) } \\
           
           \tikz[scale=0.5,decoration={markings, mark= at position 1.5 with {\arrow{stealth}}},baseline=(current bounding box.center),every circle node/.style={draw}]{
            \draw[fill=black] (-7,0) circle (3pt) node[anchor=south] {$J$} node[anchor=north] {$1$};
            \draw[fill=black] (-6,0) circle (3pt) node[anchor=south] {$J$} node[anchor=north] {$3$};
            \draw[fill=black] (-5,0) circle (3pt) node[anchor=south] {} node[anchor=north west] {$4$};
            \draw[fill=black] (-4,0) circle (3pt) node[anchor=south] {$J$} node[anchor=north] {$5$};
            \draw[fill=black] (-3,0) circle (3pt) node[anchor=south] {$J$} node[anchor=north] {$6$};
            \draw[fill=black] (-2,0) circle (3pt) node[anchor=south] {} node[anchor=north] {$7$};
            \draw[fill=black] (-1,0) circle (3pt) node[anchor=south] {$-$} node[anchor=north] {$8$};
            \draw[fill=black] (-5,-1) circle (3pt) node[anchor=north] {$-$} node[anchor=east] {$2$};
            \draw (-7,0) -- (-6,0);
            \draw (-6,0) -- (-5,0);
            \draw (-5,0) -- (-4,0);
            \draw (-4,0) -- (-3,0);
            \draw (-3,0) -- (-2,0);
            \draw (-2,0) -- (-1,0);
            \draw (-5,0) -- (-5,-1);
            \draw[black,thick,dotted] ($(-5.7,-2)$)  rectangle ($(-4.5,-0.5)$);
            \draw[black,thick,dotted] ($(-1.5,-1)$)  rectangle ($(-0.5,1)$);
        }
           & choose the selection with maximal number of dominant chunks (Step 1) \\
           
           \tikz[baseline=(current bounding box.center)]{\draw[-{Stealth[black]}] (0,0.4)   -- (0,-0.4);} & adjacent vertex=$\emptyset$ (Step 7) \\
    
            \tikz[scale=0.5,decoration={markings, mark= at position 1.5 with {\arrow{stealth}}},baseline=(current bounding box.center),every circle node/.style={draw}]{
            \draw[fill=black] (-7,0) circle (3pt) node[anchor=south] {$J$} node[anchor=north] {$1$};
            \draw[fill=black] (-6,0) circle (3pt) node[anchor=south] {$J$} node[anchor=north] {$3$};
            \draw[fill=black] (-5,0) circle (3pt) node[anchor=south] {} node[anchor=north west] {$4$};
            \draw[fill=black] (-4,0) circle (3pt) node[anchor=south] {$J$} node[anchor=north] {$5$};
            \draw[fill=black] (-3,0) circle (3pt) node[anchor=south] {$J$} node[anchor=north] {$6$};
            \draw[fill=black] (-2,0) circle (3pt) node[anchor=south] {} node[anchor=north] {$7$};
            \draw[fill=black] (-1,0) circle (3pt) node[anchor=south] {$J$} node[anchor=north] {$8$};
            \draw[fill=black] (-5,-1) circle (3pt) node[anchor=east] {$J$} node[anchor=north] {$2$};
            \draw (-7,0) -- (-6,0);
            \draw (-6,0) -- (-5,0);
            \draw (-5,0) -- (-4,0);
            \draw (-4,0) -- (-3,0);
            \draw (-3,0) -- (-2,0);
            \draw (-2,0) -- (-1,0);
            \draw (-5,0) -- (-5,-1);
        } & the result \\

        \end{tabular}
    \end{center}
    The algorithm produces $J=\{1,2,3,5,6,8\}$.

\end{example}

\begin{theorem} \label{theorem:weightthm}
    Given a labeled Dynkin diagram $(\Gamma,J_1,J_2)$ of simply-laced types, \cref{algorithm: weight_alg} gives a representative in the equivalence class $\mathcal F(J_1,J_2)$.
\end{theorem}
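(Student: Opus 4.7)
The plan is to proceed by induction on the rank of $\Gamma$, matching the algorithm's output against the Bala-Carter data of $\mathcal C_{(J_1,J_2)}$ computed directly in the subsequent sections. By \cref{equivalent}, it suffices to work on the Lie algebra side with the representative $X = \sum_{\alpha\in J_1} X_\alpha + \sum_{\beta\in J_2} X_{-\beta}$ and show that its $G$-orbit equals $\mathcal O_J$ for the $J$ produced by \cref{algorithm: weight_alg}. The strategy is to split into the three simply-laced types and verify in each that (i) the weights correctly record the contribution of a chunk to the Jordan/partition invariants of $\mathcal O_{(J_1,J_2)}$, and (ii) the greedy-plus-tie-breaking procedure correctly resolves the assembly of those contributions into a single Levi type.

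For type $A_{n-1}$, the element $X$ is a tridiagonal matrix in the defining representation, and a straightforward path-counting argument shows that the Jordan blocks of $X$ are in bijection with maximal alternating paths in the labeled diagram; a rank-$k$ $A$-chunk contributes a Jordan block of size $k+1$, matching the weight assignment. The greedy Step 1 then recovers the partition, and hence $J$, because removing a dominant chunk with its adjacent vertices precisely kills off one Jordan block while leaving the remaining structure intact. For type $D$, one works in the defining orthogonal representation: a $D_n$-chunk contributes a block of size $2n-1$, while a $D_3$-chunk attached at the branching vertex contributes size $5$ rather than the $A_3$-value $4$, accounting for the separate weight assignment. The extra-vertex rule (Step 3) is forced by the geometry at the branching vertex, where an unlabeled neighbor can be absorbed by the adjoint action of a root subgroup, reducing the computation to a lower-rank diagram. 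These assertions will be verified in detail in \cref{section:AD}.

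For type $E$, the justification rests on the reduction procedure developed in \cref{section:E}, which transforms $X$ by a finite sequence of adjoint actions into a canonical form $\sum_{\gamma\in J} X_\gamma$. The specially named chunks $A^b$ and $A^\#$ arise from configurations near the branching vertex where the reduction admits several candidate moves; the priority rules in Steps 4--5 select the move that actually produces a representative in $\mathfrak l_J^{\text{reg}}$ rather than one that collapses into a smaller Levi. One then argues inductively that after excising a dominant chunk and its immediate neighbors, the remaining labeled subdiagram has strictly smaller rank and the algorithm recurses to the correct answer by the inductive hypothesis.

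The main obstacle will be the tie-breaking in Steps 3--6, especially in type $E$. In the classical types these rules are forced by the greedy argument on Jordan blocks, but in exceptional types a purely conceptual case analysis across all labelings of $E_6$, $E_7$, $E_8$ is infeasible to present by hand. My plan is to close this gap with a computer verification: for every labeled Dynkin diagram of exceptional type, independently compute the Bala-Carter data of the orbit of $X$ via the adjoint-reduction routine of \cref{section:E} and compare against the output of \cref{algorithm: weight_alg}. Since the total number of labelings is finite and manageable, this finishes the exceptional cases, and combined with the inductive arguments for types $A$ and $D$ yields the theorem.
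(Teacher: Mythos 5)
Your proposal follows essentially the same route as the paper: for types $A$ and $D$ you verify that the weights and the greedy selection reproduce the output of the longest-path partition algorithms and the explicit constructions of \cref{section:AD}, and for type $E$ you rely on the adjoint-reduction of \cref{section:E} plus a finite computer check of the tie-breaking rules, which is precisely what the paper does (it compares the two resulting subsets up to $W$-conjugacy using a Levi-conjugacy criterion, verified by program). The approach and the division of labor between hand argument and machine verification match the paper's proof.
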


  We will prove this theorem for type $A$ and $D$ in \cref{section:weightproofAD} and for type $E$ in \cref{section:weightproofE}.

\section{Folding} \label{section:folding}
In this section, we will establish \cref{thm:regularity} for non-simply-laced types from simply-laced types via folding. It is worth mentioning that our approach in \cref{section:AD} for types $A$ and $D$ works for all classical types. What we do in \cref{section:E} for type $E$ applies to all exceptional types. However, the folding method provides an elegant and uniform approach to all non-simply-laced types from simple-laced ones.

We assume that \cref{thm:regularity} and \cref{theorem:weightthm} hold for all groups of simply-laced types.
    
We will use the following setting. For details, see \cite[1.5-1.6]{L94}. Let $\tilde G$ be the simply connected covering of the derived subgroup of $G$. There is a semi-simple, simply connected and simply-laced group $\dot G$, a pinning $(\dot T,\dot{B}^+,\dot{B}^-,\dot{u}_{\alpha_i},\dot{u}_{-\alpha_i};\alpha_i\in\dot I)$ on $\dot G$ and an automorphism $\sigma:\dot G\to\dot G$ compatible with the innings such that there is an isomorphism $\dot G^{\sigma}\to\tilde G$, where  $\dot{G}^\sigma$ is the group of $\sigma$-fixed points in $\dot G$. Moreover, such isomorphism is compatible with pinnings on both sides, in particular, the subgroup $\dot G^\sigma$ has a set of simple roots $I = \dot I/\sim_\sigma$, and its Weyl group $W$ is isomorphic to $\dot{W}^\sigma$, the group of $\sigma$-fixed points in $\dot W$.

   \begin{notation}
    We will use the dot notation for everything concerned with $\dot G$, for example, $\dot J$ for a subset of $\dot I$, $\dot{L}_{\dot J}$ for a Levi subgroup of $\dot G$ and $\dot{\mathcal C}_{\dot J}$ for the regular unipotent conjugacy class containing the regular elements in $\dot{L}_{\dot J}$. 
   \end{notation}

   In this way, we can realize any non-simply-laced group as a subgroup of a simply-laced group. The automorphism $\sigma$ can be induced from the diagram automorphism of the Dynkin diagrams. More precisely, given a diagram automorphism of the Dynkin diagram of a simply-laced group $\dot G$, we can lift it to be an automorphism $\sigma$ of $\dot G$, and the $\sigma$-fixed point subgroup will have the respective non-simply-laced Dynkin diagram.

   \begin{example} \label{folding}
      We may assume that both $G$ and $\dot G$ are quasi-simple, then by the classification of connected Dynkin diagrams and the respective diagram automorphisms, we can give a list of all possible foldings as follows.
   
   \begin{center}
   \begin{tabular}{|c|c|}
   \hline
    $A_{2n-1}\rightsquigarrow C_n$ & $\dynkin[edge length=.75cm,
involutions={15;24}]{A}{**.*.**} \rightsquigarrow \dynkin[%
edge length=.75cm,]{C}{**.**}$ \\ \hline

 $A_{2n}\rightsquigarrow B_n$ & $\dynkin[edge length=.75cm,
involutions={16;25;34}]{A}{**.**.**} \rightsquigarrow \dynkin[%
edge length=.75cm,]{B}{**.**}$ \\ \hline
       
 $D_{n+1}\rightsquigarrow B_n$ & $\dynkin[edge length=.75cm,
involutions={[out=60,relative]56}]{D}{**.****} \rightsquigarrow \dynkin[%
edge length=.75cm]{B}{**.**}$ \\ \hline

 $E_6\rightsquigarrow F_4$ & $\dynkin[edge length=.75cm, upside down,
involutions={16;35}]E6 \rightsquigarrow \dynkin[%
edge length=.75cm]F4$ \\ \hline

$D_4\rightsquigarrow G_2$ & $\tikzset{/Dynkin diagram/fold style/.style={-stealth,thick,
shorten <=1mm,shorten >=1mm,}}
\dynkin[ply=3,edge length=.75cm]D4
 \rightsquigarrow \dynkin[%
edge length=.75cm]G2$
\\ \hline
        
   \end{tabular}
    \end{center}

   \end{example}

   Denote $\pi: \dot I\to I$ to be the natural projection.
   
   \begin{lemma}
    Assume that $\sigma$ is one of the diagram automorphisms given in Example \ref{folding}. If two $\sigma$-stable subsets $\dot J$ and $\dot K$ of $\dot I$ are $\dot W$-conjugate, then they are $\dot{W}^\sigma$-conjugate, in other words, $J = \pi(\dot J)$ and $K = \pi(\dot K)$ are $W$-conjugate.

    \begin{proof}
    We prove this lemma using case-by-case analysis.

    For the first folding $A_{2n-1}\to C_n$ in Example \ref{folding}, since $\dot J$ and $\dot K$ are conjugate, the root subsystem generated by them should be of the same type. Since $\sigma(\dot J)=\dot J$ and $\sigma(\dot K)=\dot K$, they are both of the type $2A_{l_1}\times 2A_{l_2}\times\cdots 2A_{l_k}\times A_{2m-1}$, and then $\pi(J)$ and $\pi(K)$ are both of the type $A_{l_1}\times A_{l_2}\times\cdots A_{l_k}\times C_m$. By the classification of conjugacy classes of subsets of roots \cite[2.3]{GP}, they are conjugate to each other under the type $C_n$ Weyl group action. The analysis for foldings $A_{2n}\to B_n$ and $D_{n+1}\to B_n$ is similar.

    For the folding $E_6\to F_4$, note that under the assumption, $J$ and $K$ would be different only when they are both of the types $A_1$, $A_1\times A_1$ or $A_1\times A_1\times A_1$. In any situation, $\pi(J)$ and $\pi(K)$ are conjugate to each other under the type $F_4$ Weyl group action. For the folding $D_4\to G_2$, $J$ and $K$ cannot be conjugate to each other and meanwhile $\sigma$-stable unless $J = K$, and the assertion is trivial.
    \end{proof}
   \end{lemma}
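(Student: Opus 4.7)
The plan is to exhaust the five foldings from Example \ref{folding} by case analysis. For each folding, I will first describe the shape of a $\sigma$-stable subset $\dot J \subseteq \dot I$ by decomposing its Dynkin subdiagram into $\sigma$-orbits of connected components, then argue that the isomorphism type of the root subsystem spanned by $\pi(\dot J)$ in the non-simply-laced root system is completely determined by the isomorphism type of the subsystem spanned by $\dot J$. Once this dictionary between ``types'' is set up, the classification of conjugacy classes of subsets of simple roots in classical and exceptional non-simply-laced types from \cite[2.3]{GP} will finish the proof: two subsets of $I$ of the same type generating the same parabolic subsystem type are $W$-conjugate.

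For the three classical foldings, I would handle them uniformly. In $A_{2n-1}\to C_n$, the involution $\sigma$ swaps the simple root $i$ with $2n-i$; hence a $\sigma$-stable $\dot J$ is a disjoint union of $\sigma$-interchanged pairs of identical $A_\ell$-blocks together with possibly one $\sigma$-stable central block of shape $A_{2m-1}$. Projecting to $I$ yields $A_\ell$ and $C_m$ components respectively, so the type of the parabolic subsystem generated by $\pi(\dot J)$ is determined. The cases $A_{2n}\to B_n$ and $D_{n+1}\to B_n$ are entirely analogous, with the central $\sigma$-stable block contributing a $B_m$ component instead. For $E_6\to F_4$ and $D_4\to G_2$, since there are only finitely many $\sigma$-stable subsets of $\dot I$, I would simply enumerate them; in $E_6$, the only ambiguity after taking $\sigma$-stable representatives is when $\pi(\dot J)$ has type $A_1$, $A_1\times A_1$, or $A_1^3$ inside $F_4$, and the $F_4$ Weyl group action already merges these into single conjugacy classes, and in $D_4\to G_2$, triality combined with $\sigma$-stability forces $\dot J = \dot K$ outside the trivial cases.

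The main obstacle I expect is the well-known failure in type $D_n$ of the ``type determines $\dot W$-conjugacy class'' principle: parabolic subsystems of the same isomorphism type can split into multiple $\dot W$-conjugacy classes (the typical example being two non-conjugate $A_1^4$ inside $D_4$). I therefore need to verify, for the folding $D_{n+1}\to B_n$, that $\sigma$-invariance forces any such potentially distinct $\dot W$-conjugacy classes to either coincide or be exchanged by an element of $\dot W^\sigma = W$. Concretely, the only branching ambiguity in $D_{n+1}$ happens at the fork of the two terminal vertices, and $\sigma$ swaps precisely those two vertices, so any ``twist'' distinguishing $\dot W$-conjugacy classes is absorbed by the $\sigma$-equivariance requirement. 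With that verified, the lemma follows.
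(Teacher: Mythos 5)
Your proposal follows essentially the same route as the paper's proof: a case-by-case analysis over the five foldings, reading off the isomorphism type of the parabolic subsystem generated by a $\sigma$-stable subset, noting that it determines the type of the projected subset in the folded diagram, and then invoking the classification of conjugacy classes of subsets of simple roots \cite[2.3]{GP} in the non-simply-laced type. The extra verification you flag for the $D_{n+1}\to B_n$ fork is a reasonable sanity check but is not actually needed for the argument as structured, since conjugacy is ultimately tested in type $B_n$, where the type of the subset does determine its $W$-conjugacy class.
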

   
   \begin{lemma} Assume that $\sigma$ is one of the diagram automorphisms given in Example \ref{folding}.
   For $\dot J\subseteq\dot I$ with $\sigma(\dot J) = \dot J$, $\dot{\mathcal C}_{\dot J}\cap\dot{G}^\sigma$ is contained in a single $\dot{G}^\sigma$-unipotent conjugacy class.

   \begin{proof}
   We first deal with the case where $\dot J = \dot I$ and $\dot{\mathcal C}_{\dot I}$ be the regular unipotent conjugacy class of $\dot G$. Let $\dot B$ be a Borel subgroup of $\dot G$. Then $\dot{B}^\sigma = \dot{G}^\sigma\cap\dot{B}$ is a Borel subgroup of $\dot{G}^\sigma$ and the natural morphism $\iota:\dot{G}^\sigma/\dot{B}^\sigma\to \dot G/\dot B$ is an injection. Viewing both sets as the sets of Borels in respective groups, we have $B^\prime\subseteq \iota(B^\prime)$.

    For $u\in\dot{\mathcal{C}}_{\dot I}\cap\dot{G}^\sigma$, there is a Borel subgroup $B^\prime$ of $\dot{G}^\sigma$ such that $u\in B^\prime\subseteq\iota(B^\prime)$. Since $u$ is a regular unipotent element in $\dot G$, it lies in a unique Borel subgroup of $\dot G$. Since $\iota$ is injective, $u$ lies in a unique Borel subgroup of $\dot G^\sigma$. Hence, $u$ is a regular element in $\dot{G}^\sigma$. Note that here we do not need to assume that $\sigma$ is one of the diagram automorphisms in \cref{folding}.

    Now we deal with the general case. Take $u\in\dot{\mathcal C}_{\dot J}\cap\dot{G}^\sigma$, and consider the $\dot{G}^\sigma$-conjugacy class $\mathcal C(u)$ of $u$. Let $L_K$ be a standard and minimal Levi subgroup of $\mathcal C(u)$ in $\dot G^\sigma$. Then $u$ is $\dot{G}^\sigma$-conjugate to an element $u'$ in $L_K$. By taking  $u$ to be $u'$, we may assume that $u\in L_K$. Then by definition of $L_K$, $u$ is distinguished in it, i.e., $u$ does not lie in any proper Levi subgroup of $L_K$. Now consider $u$ as a unipotent element of $\dot G$, $u$ is distinguished in $\dot{L}_{\pi^{-1}(K)}$.  Since $u\in\mathcal C_{\dot J}$, we know that $\dot L_{\dot J}$ is $\dot G$-conjugate to $\dot L_{\pi^{-1}(K)}$ and $u$ is a regular unipotent element in $\dot{L}_{\pi^{-1}(K)}$. Also, $\dot{L}_{\pi^{-1}(K)}^{\sigma} = L_K$, so $u$ is regular unipotent in $L_K$. But then regular elements in $L_K$ are $L_K$-conjugate. We have $\dot{\mathcal C}_{\dot J}\cap\dot{G}^\sigma$ contained in a single $\dot G^\sigma$-conjugacy class.  
   \end{proof}
   
   \end{lemma}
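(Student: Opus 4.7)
The plan is to reduce, via a two-step argument, to the base case where the full regular unipotent class of $\dot G$ meets $\dot G^\sigma$, and then to pull everything back through Bala--Carter theory.

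\textbf{Base case ($\dot J = \dot I$).} First I would handle the case in which $\dot{\mathcal C}_{\dot I}$ is the regular unipotent class of $\dot G$. The strategy is to use the characterization ``$u$ is regular $\iff$ $u$ lies in a unique Borel''. Since $\sigma$ is part of a pinning, there is a $\sigma$-stable Borel $\dot B$ and the natural map $\iota : \dot G^\sigma/\dot B^\sigma \hookrightarrow \dot G/\dot B$ identifies the Borels of $\dot G^\sigma$ with a subset of the $\sigma$-stable Borels of $\dot G$; moreover every Borel $B'$ of $\dot G^\sigma$ is contained in a unique Borel $\iota(B')$ of $\dot G$. For $u \in \dot{\mathcal C}_{\dot I}\cap \dot G^\sigma$, uniqueness of the Borel in $\dot G$ containing $u$ then forces uniqueness of the Borel in $\dot G^\sigma$ containing $u$, so $u$ is regular in $\dot G^\sigma$. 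All regular unipotents in $\dot G^\sigma$ form one conjugacy class, so this case is done.

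\textbf{General case.} Given $u \in \dot{\mathcal C}_{\dot J}\cap \dot G^\sigma$, I would apply Bala--Carter in $\dot G^\sigma$: after $\dot G^\sigma$-conjugation, $u$ lies in a standard Levi $L_K$ of $\dot G^\sigma$ that is minimal containing $u$, i.e.\ $u$ is distinguished in $L_K$. The corresponding $\sigma$-stable standard Levi of $\dot G$ is $\dot L_{\pi^{-1}(K)}$, with $\dot L_{\pi^{-1}(K)}^{\sigma} = L_K$. The next step is to promote distinguishedness from $L_K$ to $\dot L_{\pi^{-1}(K)}$: working with a $\sigma$-stable maximal torus of $Z_{\dot L_{\pi^{-1}(K)}}(u)$ (which exists since $u$ is $\sigma$-fixed and $\dot L_{\pi^{-1}(K)}$ is $\sigma$-stable), the minimal Levi of $\dot L_{\pi^{-1}(K)}$ containing $u$ is itself $\sigma$-stable, and its $\sigma$-fixed points give a Levi of $L_K$ containing $u$; distinguishedness of $u$ in $L_K$ then forces this minimal Levi to be all of $\dot L_{\pi^{-1}(K)}$.

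\textbf{Matching Bala--Carter data.} At this point $u \in \dot{\mathcal C}_{\dot J}$ forces the Bala--Carter Levi of $u$ in $\dot G$ to be $\dot L_{\dot J}$ with Borel parabolic datum, while the previous step shows the Bala--Carter Levi is also (up to $\dot G$-conjugacy) $\dot L_{\pi^{-1}(K)}$. Uniqueness of Bala--Carter data gives that $\dot L_{\dot J}$ and $\dot L_{\pi^{-1}(K)}$ are $\dot G$-conjugate, and the distinguished parabolic is a Borel, i.e.\ $u$ is regular unipotent in $\dot L_{\pi^{-1}(K)}$. Applying the base case to the reductive group $\dot L_{\pi^{-1}(K)}$ with its induced $\sigma$-action yields that $u$ is regular unipotent in $L_K$.

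\textbf{Conclusion.} For two elements $u_1,u_2 \in \dot{\mathcal C}_{\dot J}\cap \dot G^\sigma$, the above procedure yields subsets $K_1,K_2 \subseteq I$ with $\pi^{-1}(K_i)$ both $\sigma$-stable and both $\dot W$-conjugate to $\dot J$; the previous lemma then gives that $K_1$ and $K_2$ are $W$-conjugate, so after further $\dot G^\sigma$-conjugation we may arrange $K_1 = K_2 = K$. Both $u_1,u_2$ are then regular unipotent in the same $L_K$, hence $L_K$-conjugate, hence $\dot G^\sigma$-conjugate. The step I expect to require the most care is the descent of distinguishedness from $L_K$ to $\dot L_{\pi^{-1}(K)}$, since it requires manipulating $\sigma$-stable maximal tori in centralizers rather than just invoking a formal property.
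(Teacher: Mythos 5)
Your proposal follows essentially the same route as the paper's proof: the base case via the injection of flag varieties $\dot G^\sigma/\dot B^\sigma \hookrightarrow \dot G/\dot B$ and the unique-Borel characterization of regularity, and the general case via a minimal (Bala--Carter) standard Levi $L_K$ in $\dot G^\sigma$ whose preimage $\dot L_{\pi^{-1}(K)}$ is matched to $\dot L_{\dot J}$ by uniqueness of Bala--Carter data. You are in fact slightly more explicit than the paper in two places it leaves implicit --- the descent of distinguishedness from $L_K$ to $\dot L_{\pi^{-1}(K)}$ via a $\sigma$-stable maximal torus of the centralizer, and the final appeal to the preceding lemma to identify $K_1$ and $K_2$ up to $W$-conjugacy --- both of which are correct and genuinely needed.
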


   \begin{corollary} \label{rediscover}
     Assume that $\dot{\mathcal C}_{(\pi^{-1}(J_1),\pi^{-1}(J_2))} = \dot{\mathcal C}_{\dot J}$ as conjugacy classes in $\dot G$ where $\sigma(\dot J)=\dot J$. Then $\mathcal C_{(J_1,J_2)} = \mathcal C_{\pi(\dot J)}$ as conjugacy classes in $\dot{G}^\sigma$.

     \begin{proof}
         Note that both $\mathcal C_{(J_1,J_2)}$ and $\mathcal C_{\pi(\dot J)}$ lie in $\mathcal C_{(\pi^{-1}(J_1),\pi^{-1}(J_2))}\cap\dot G^\sigma = \dot{\mathcal C}_{\dot J}\cap\dot G^\sigma$. Now apply the above lemma. 
     \end{proof}
   \end{corollary}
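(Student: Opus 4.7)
The plan is to reduce the statement to the previous lemma, which says that $\dot{\mathcal C}_{\dot J}\cap \dot G^\sigma$ is contained in a single $\dot G^\sigma$-conjugacy class. Once we show that both $\mathcal C_{(J_1,J_2)}$ and $\mathcal C_{\pi(\dot J)}$, viewed as subsets of $\dot G$, meet $\dot{\mathcal C}_{\dot J}$, they will coincide because each is itself a single $\dot G^\sigma$-conjugacy class.

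First I would pick a concrete representative of $\mathcal C_{(J_1,J_2)}$, namely an element of the form
\[
u_0 \;=\; \prod_{\alpha\in J_1} u_\alpha(1)\,\prod_{\beta\in J_2} u_{-\beta}(1)\;\in\;\dot G^\sigma.
\]
Under the identification of $\dot G^\sigma$ with $\tilde G$ coming from the compatible pinnings, each simple root group of $\dot G^\sigma$ indexed by $\alpha\in I$ is built from the $\sigma$-orbit of simple root groups in $\dot G$ indexed by $\pi^{-1}(\alpha)$. So $u_0$ lies in the product $(\dot L_{\pi^{-1}(J_1)}^{\mathrm{reg}}\cap\dot U^+)\cdot(\dot L_{\pi^{-1}(J_2)}^{\mathrm{reg}}\cap\dot U^-)$ and hence in $\dot{\mathcal C}_{(\pi^{-1}(J_1),\pi^{-1}(J_2))}$, which by hypothesis equals $\dot{\mathcal C}_{\dot J}$. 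This gives $\mathcal C_{(J_1,J_2)}\subseteq \dot{\mathcal C}_{\dot J}\cap \dot G^\sigma$.

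Next I would take a representative $v_0$ of $\mathcal C_{\pi(\dot J)}$, i.e.\ a regular unipotent element of the standard Levi $L_{\pi(\dot J)}$ of $\dot G^\sigma$. Because the pinnings are compatible with $\sigma$, we have $L_{\pi(\dot J)}=\dot L_{\dot J}^\sigma$. Applying the $\dot J=\dot I$ case of the previous lemma, but to the $\sigma$-stable Levi $\dot L_{\dot J}$ in place of $\dot G$, any regular unipotent element of $\dot L_{\dot J}^\sigma$ is automatically regular in $\dot L_{\dot J}$. Hence $v_0\in\dot{\mathcal C}_{\dot J}$, giving $\mathcal C_{\pi(\dot J)}\subseteq \dot{\mathcal C}_{\dot J}\cap\dot G^\sigma$.

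Finally I would invoke the previous lemma to conclude that $\dot{\mathcal C}_{\dot J}\cap \dot G^\sigma$ is contained in a single $\dot G^\sigma$-conjugacy class, so the two $\dot G^\sigma$-conjugacy classes $\mathcal C_{(J_1,J_2)}$ and $\mathcal C_{\pi(\dot J)}$ must coincide. The only substantive point is the first inclusion, where one has to verify that the factorization of a simple root vector in $\dot G^\sigma$ as a sum over a $\sigma$-orbit in $\dot G$ places $u_0$ inside the appropriate regular locus of $\dot L_{\pi^{-1}(J_i)}$; this is essentially built into the compatibility of pinnings recalled at the start of this section, so no additional computation is required.
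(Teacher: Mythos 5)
Your overall strategy is exactly the paper's: show that both $\mathcal C_{(J_1,J_2)}$ and $\mathcal C_{\pi(\dot J)}$ sit inside $\dot{\mathcal C}_{\dot J}\cap\dot G^\sigma$, which by the preceding lemma lies in a single $\dot G^\sigma$-conjugacy class, and then use that each of the two sets is itself a full $\dot G^\sigma$-class. Your first inclusion (via the representative $u_0$ and the projection to simple root groups, as in \cref{cor:singleConjClass}) is correct and in fact supplies detail that the paper's one-line proof leaves implicit.

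The one flaw is in your justification of the second inclusion. The $\dot J=\dot I$ case of the preceding lemma proves that a $\sigma$-fixed element that is regular unipotent in $\dot G$ is regular in $\dot G^\sigma$; applied to $\dot L_{\dot J}$ it would give ``regular in $\dot L_{\dot J}$ and $\sigma$-fixed $\Rightarrow$ regular in $\dot L_{\dot J}^\sigma$'', which is the \emph{converse} of what you need. What you actually need is that a regular unipotent element $v_0$ of $L_{\pi(\dot J)}=\dot L_{\dot J}^\sigma$ is regular in $\dot L_{\dot J}$. This is true, but it follows from the same projection argument as your first inclusion (write $v_0$ in a Borel of $\dot L_{\dot J}^\sigma$ with nonzero simple-root components; compatibility of the pinnings then forces nonzero components along every simple root of $\dot L_{\dot J}$), not from the cited implication. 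Alternatively, you could avoid this inclusion altogether: the general case of the lemma, together with the lemma on $W$-conjugacy of $\sigma$-stable subsets, shows that every element of $\dot{\mathcal C}_{\dot J}\cap\dot G^\sigma$ is $\dot G^\sigma$-conjugate to a regular unipotent of some $L_K$ with $K$ conjugate to $\pi(\dot J)$, which gives the reverse containment $\dot{\mathcal C}_{\dot J}\cap\dot G^\sigma\subseteq\mathcal C_{\pi(\dot J)}$ and hence the desired equality once the first inclusion is established. With either repair your argument is complete.
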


   Now we show that \cref{thm:regularity} holds when $G$ is non-simply-laced with the assumption that \cref{thm:regularity} and \cref{theorem:weightthm} hold when $G$ is simply-laced.

   \begin{proof}
   For each quasi-simple and non-simply-laced group $G$, we may consider its simply-laced cover $\dot G$. The notations are as before. Let $\dot J_1 = \pi^{-1}(J_1)$ and $\dot J_2 = \pi^{-1}(J_2)$. \cref{algorithm: weight_alg} for types $A$, $D$ and $E$ suggests that when $\dot{J}_1$ and $\dot{J}_2$ are $\sigma$-stable, there is always a resulting $\dot J$ that is $\sigma$-stable. Now the theorem follows from \cref{rediscover}.
   \end{proof}

    This result provides a way to calculate the fusion map in non-simply-laced types from the result of simply-laced types.
  
   \begin{example}
   We provide some examples of calculating the fusion map via folding.
   
       \begin{enumerate}
       \item $$
\dynkin[%
edge length=.6cm,
labels*={J_1,J_1,J_2}]C3 \xrightarrow{\text{unfold}}\dynkin[%
edge length=.6cm,
labels*={J_1,J_1,J_2,J_1,J_1}]A5\xrightarrow{\text{fusion}}\dynkin[%
edge length=.6cm,
labels*={J,J,,J,J}]A5\xrightarrow{\text{fold back}}\dynkin[%
edge length=.6cm,
labels*={J,J}]C3.$$

\item $$
\dynkin[%
edge length=.6cm,
labels*={J_1,J_1,J_2}]B3 \xrightarrow{\text{unfold}}\dynkin[%
edge length=.6cm,
labels*={J_1,J_1,J_2,J_2,J_1,J_1}]A6\xrightarrow{\text{fusion}}\dynkin[%
edge length=.6cm,
labels*={J,J,,,J,J}]A6\xrightarrow{\text{fold back}}\dynkin[%
edge length=.6cm,
labels*={J,J}]B3.$$

           \item $$
\dynkin[%
edge length=.6cm,
labels*={J_1,J_1,J_2}]B3 \xrightarrow{\text{unfold}}\dynkin[%
edge length=.6cm,
labels*={J_1,J_1,J_2,J_2}]D4\xrightarrow{\text{fusion}}\dynkin[%
edge length=.6cm,
labels*={J,J}]D4\xrightarrow{\text{fold back}}\dynkin[%
edge length=.6cm,
labels*={J,J}]B3.$$

\item $$
\dynkin[%
edge length=.6cm,
labels*={J_1,J_2,J_1,J_2}]F4 \xrightarrow{\text{unfold}}\dynkin[%
edge length=.6cm,
labels*={J_2,J_1,J_1,J_2,J_1,J_2},upside down]E6\xrightarrow{\text{fusion}}\dynkin[%
edge length=.6cm,
labels*={,J,J,,J},upside down]E6\xrightarrow{\text{fold back}}\dynkin[%
edge length=.6cm,
labels*={J,,J}]F4.$$

\item $$
\dynkin[%
edge length=.6cm,
labels*={J_2,J_1}]G2 \xrightarrow{\text{unfold}}\dynkin[%
edge length=.6cm,
labels*={J_1,J_2,J_1,J_1}]D4\xrightarrow{\text{fusion}}\dynkin[%
edge length=.6cm,
labels*={J,,J,J}]D4\xrightarrow{\text{fold back}}\dynkin[%
edge length=.6cm,
labels*={,J}]G2.$$
       \end{enumerate}
   \end{example}

\section{Type $A$ and Type $D$}\label{section:AD}
In this section, we prove \cref{thm:regularity} and \cref{theorem:weightthm} for type $A$ and $D$.

In classical types, except for the very even nilpotent orbits in type $D$, each nilpotent orbit is determined by its partition. Now we fix a choice of root vectors $\{X_\alpha\mid \alpha\in\Phi\}$ and then consider the element $X = \sum_{\alpha\in J_1}X_\alpha+\sum_{\beta\in J_2}X_{-\beta}\in\mathcal O_{(J_1,J_2)}$, which is itself a matrix. For type $A$ and type $D$, we introduce an algorithm to calculate the partition of $X$. This algorithm suggests certain rules that those partitions will obey for each type, and also indicate a necessary condition for very even partitions to occur. Using this information, we can construct a nilpotent element of the form $\sum_{\alpha\in J}X_\alpha$ for some $J\subseteq I$ which also lies in $\mathcal O_{(J_1,J_2)}$ and is regular in $\mathfrak l_J$. This will complete the proof of \cref{thm:regularity} for classical types.

In \cref{section:AD} and \cref{section:E}, we follow Bourbaki's convention for root numbering in the Dynkin diagrams in \cite{Bour}.

\subsection{Classification by partitions} \label{partition}
It is known that for classical types, nilpotent orbits can be classified by partitions with certain properties and some slight modification. Since we only need to deal with the simply-laced types, we review this theory with a focus on type $A$ and type $D$, following \cite{CM}.

\begin{itemize}

\item The nilpotent orbits in the Lie algebra of type $A_n$ are in correspondence with the set of partitions $$\mathcal P(n+1) = \{\mathbf{d} =[d_1,d_2,...,d_l]\mid \sum_{i=1}^l d_i = n+1\}.$$ 

\item The nilpotent orbits in the Lie algebra of type $D_n$ are in correspondence with the set
$$\mathcal P_{+}(2n) = \{\mathbf{d}\in\mathcal P(2n)\mid \text{ even numbers occur even times in }\mathbf{d}\},$$
except that each of the \emph{very even partitions}, namely, a partition in which no odd number occurs, corresponds to two nilpotent orbits.

\item Furthermore, the partition corresponding to a nilpotent orbit is given by the sizes of Jordan blocks of the Jordan normal form of any representative.

\end{itemize}
      
\begin{notation}
  We use $[d_1^{r_1},d_2^{r_2},...,d_k^{r_k}]$ to denote the partition $\mathbf d$ where $d_1$ occurs $r_1$ times, $d_2$ occurs $r_2$ times, etc. When $r_i = 1$, we omit it. 
\end{notation}
      
\begin{example} Here are several examples to illustrate the classification, using partitions.

\begin{enumerate}
    \item For type $A_n$, we have $\mathfrak g = \mathfrak{sl}_{n+1}$ and the nilpotent elements are those whose eigenvalues are all $0$. By the theory of Jordan normal form, the nilpotent orbits are determined entirely by the sizes of its Jordan blocks. This gives a partition of $n+1$.
 
 \item For type $D_4$, we have $\mathfrak g =  \mathfrak{so}_8$. The set is $$\mathcal P_{+}(8) =\{[1^8],[1^6,2^2],[2^4],[1^5,3],[1^2,3^2],[4^2],[1^3,5],[1,7]\}.$$ The partitions $[2^4]$ and $[4^2]$ are very even partitions and each of them corresponds to $2$ nilpotent orbits. Together, there are $10$ nilpotent orbits inside $\mathfrak{so}_8$. As an example, $[4^2]$ is a very even partition. The nilpotent elements $X_{\alpha_1}+X_{\alpha_2}+X_{\alpha_3}$ and $X_{\alpha_1}+X_{\alpha_2}+X_{\alpha_4}$ are not conjugate under the adjoint $G$-action, but the respective matrices have the same Jordan normal form of partition $[4^2]$.
\end{enumerate}

\end{example}

\subsection{Calculate the partitions}
In this section, we introduce the algorithm to calculate the partition of a given nilpotent element in types $A$ and $D$.


\subsubsection{Type $A$} 
Let $E_{i,j}$ be the matrix with $1$ at $(i,j)$-entry and $0$ elsewhere. We choose the root vectors to be $X_{\alpha_i}=E_{i,i+1}$, $1\leq i\leq n$, and $X_{-\alpha_i}$ to be the transpose of $X_{\alpha_i}$.

\begin{algorithm} \label{algorithm:partitionA} 
    Given a labeled Dynkin diagram $\Gamma$ of type $A$, we construct a digraph $D$ and use it to determine the partition as follows.
    \begin{enumerate}[label=\textbf{Step \arabic*}.]
        \item Construct a graph $D'$ from $\Gamma$ as in the picture. Each vertex in $\Gamma$ corresponds to an edge in $D'$. We use the number of the vertex in $\Gamma$ to label the corresponding edge in $D'$. We also label the vertices in $D'$ as $v_1,\dots,v_{n+1}$. 

        \begin{center}
                \begin{tabular}{ c c c }
                    $$\dynkin[%
                    edge length=1cm,
                     labels={1,2,n}]A{**.*}
                    $$ & $\implies$ &
    
                    \begin{tikzpicture}[auto,scale=0.75,transform shape,-,main node/.style={circle,draw}, baseline=(current bounding box.center)]
                        \vertex (1) at (1,0) {$v_1$};
                        \vertex (2) at (2,0){$v_2$};
                        \vertex (3) at (3,0){$v_3$};
                        \vertex (4) at (4.5,0){$v_{n}$};
                        \vertex (5) at (5.5,0){$v_{n+1}$};
                        
                	\path[every node/.style={font=\sffamily\small}]
                	(1) edge node {$1$} (2)
                	(2) edge node {$2$} (3)
                	(3) edge[thick,dotted] (4)
                	(4) edge node {$n$} (5);
    
                        \end{tikzpicture}\\
                    $\Gamma$ & & $D'$\\
                \end{tabular}.
            \end{center}

        \item We assign arrows to edges in $D'$ by the following rules. If $v\in J_1\ (resp.\ J_2)$ is a vertex in $\Gamma$, draw a forward (resp. backward) arrow on the two corresponding edges in $D'$. Here, a forward arrow means drawing from the vertex of a smaller number to the other with a larger number, and a backward arrow means the opposite. We call this directed graph $D$.
        \item Repeat the following until the digraph $D$ is empty:
            \begin{enumerate}
                \item Let $C$ be the collection of longest directed paths of length $n_i$.
                \item Let $P$ be a maximal subset of $C$ consists of disjoint paths. Any choice of maximal subsets is feasible. Let $k_i$ be the cardinality of $P$.
                \item Delete $P$ from the digraph $D$. If a vertex is deleted, also delete all edges attached.
                \item Record $n_i+1$ for $k_i$ number of times in the resulting partition.
            \end{enumerate}
        \item Output the resulting partition.
    \end{enumerate}
\end{algorithm}

\begin{example} For type $A_{11}$, consider the following labeled Dynkin diagram
    $$\dynkin[%
    edge length=.75cm,
    labels={J_1,J_2,J_2,J_1,J_1,J_2,J_2,J_1,J_2,J_2,J_2}]A{11}.$$
    Then the above algorithm produces (we add some inclination to the edges so that the paths become apparent.)
    \begin{center}
        \begin{tabular}{ c c c c c c}
         digraph  & $
            \tikz[scale=0.5,baseline=(current bounding box.center),decoration={markings, mark= at position 0.65 with {\arrow{stealth}}}]{
            \draw[fill=black] (-1,1) circle (3pt);
            \draw[fill=black] (0,0) circle (3pt);
            \draw[fill=black] (1,1) circle (3pt);
            \draw[fill=black] (2,2) circle (3pt);
            \draw[fill=black] (3,1) circle (3pt);
            \draw[fill=black] (4,0) circle (3pt);
            \draw[fill=black] (5,1) circle (3pt);
            \draw[fill=black] (6,2) circle (3pt);
            \draw[fill=red] (7,1) circle (3pt);
            \draw[fill=red] (8,2) circle (3pt);
            \draw[fill=red] (9,3) circle (3pt);
            \draw[fill=red] (10,4) circle (3pt);
            \draw[thick,postaction={decorate}] (-1,1) -- (0,0);
            \draw[thick,postaction={decorate}] (1,1) -- (0,0);
            \draw[thick,postaction={decorate}] (2,2) -- (1,1);
            \draw[thick,postaction={decorate}] (2,2) -- (3,1);
            \draw[thick,postaction={decorate}] (3,1) -- (4,0);
            \draw[thick,postaction={decorate}] (5,1) -- (4,0);
            \draw[thick,postaction={decorate}] (6,2) -- (5,1);
            \draw[thick,postaction={decorate}] (6,2) -- (7,1);
            \draw[thick,postaction={decorate},red] (8,2) -- (7,1);
            \draw[thick,postaction={decorate},red] (9,3) -- (8,2);
            \draw[thick,postaction={decorate},red] (10,4) -- (9,3);}$ & $\implies$ & 
            $\tikz[scale=0.5,baseline=(current bounding box.center),decoration={markings, mark= at position 0.65 with {\arrow{stealth}}}]{
            \draw[fill=black] (-1,1) circle (3pt);
            \draw[fill=red] (0,0) circle (3pt);
            \draw[fill=red] (1,1) circle (3pt);
            \draw[fill=red] (2,2) circle (3pt);
            \draw[fill=black] (3,1) circle (3pt);
            \draw[fill=red] (4,0) circle (3pt);
            \draw[fill=red] (5,1) circle (3pt);
            \draw[fill=red] (6,2) circle (3pt);
            \draw[thick,postaction={decorate}] (-1,1) -- (0,0);
            \draw[thick,postaction={decorate},red] (1,1) -- (0,0);
            \draw[thick,postaction={decorate},red] (2,2) -- (1,1);
            \draw[thick,postaction={decorate}] (2,2) -- (3,1);
            \draw[thick,postaction={decorate}] (3,1) -- (4,0);
            \draw[thick,postaction={decorate},red] (5,1) -- (4,0);
            \draw[thick,postaction={decorate},red] (6,2) -- (5,1);}$ &$\implies$&
            $\tikz[scale=0.5,baseline=(current bounding box.center),decoration={markings, mark= at position 0.65 with {\arrow{stealth}}}]{
            \draw[fill=red] (0,0) circle (3pt);
            \draw[fill=red] (1,0) circle (3pt);}$\\ 
           partition&4 & & 3+3 && 1+1 .\\
        \end{tabular}

 The result partition is $12=4+3+3+1+1$.
\end{center}
\end{example}

\begin{proposition}
    Given a labeled Dynkin diagram $(\Gamma,J_1,J_2)$ of type $A$, the partition of $X = \sum_{\alpha\in J_1}X_\alpha+\sum_{\beta\in J_2}X_{-\beta}$ is given by the above \cref{algorithm:partitionA}.
\end{proposition}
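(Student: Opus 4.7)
My plan is to decompose $V = \mathbb C^{n+1}$ into $X$-cyclic subspaces whose dimensions match the algorithm's output. With the chosen root vectors, $X_{i,i+1} = 1$ for $i\in J_1$ and $X_{i+1,i}=1$ for $i\in J_2$; in the algorithm's convention this says exactly that a directed edge $v_a\to v_b$ in $D$ corresponds to $Xe_b$ having $e_a$ as a summand. Consequently, a directed walk $v_{j_0}\to v_{j_1}\to\cdots\to v_{j_k}$ of length $k$ in $D$ contributes the basis vector $e_{j_0}$ to $X^k e_{j_k}$.

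Because the type $A$ Dynkin diagram is a path and each edge of $D$ carries a single orientation, every directed walk in $D$ is monotone in the index coordinate. The maximal directed walks in $D$ therefore coincide with the maximal monotone segments, separated by peaks (two incoming arrows) and valleys (two outgoing arrows), and the dominant paths selected in each iteration of the algorithm are portions of these segments that survive after earlier iterations.

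The central step is to show that for every walk $W\colon v_{j_0}\to\cdots\to v_{j_L}$ selected in the $i$-th iteration (so $L=n_i$), the $X$-cyclic subspace $V_W = \mathrm{span}\{e_{j_L},\, Xe_{j_L},\, \ldots,\, X^L e_{j_L}\}$ is $(L+1)$-dimensional with $X^{L+1}e_{j_L}=0$, and that the subspaces $\{V_W\}$ over all iterations form a direct sum decomposition $V=\bigoplus_W V_W$. This would exhibit the Jordan partition of $X$ as the multiset $\{L+1:W\text{ selected}\}$, matching the algorithm. The chain-length assertion comes from the monotonicity of walks in $D$: extending a walk past either endpoint would require arrows that are absent by the maximality of the selected walk. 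Establishing the direct-sum property requires tracking how $X^k e_{j_L}$ can acquire secondary components via walks in other monotone segments of $D$.

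The main obstacle is precisely controlling these secondary components at vertices where two monotone segments meet. For such a shared vertex $v_j$, the vector $X e_j$ has contributions from two directions, which can cause different cyclic subspaces to entangle. The greedy maximum-disjoint rule of the algorithm is what resolves the tension: after extracting the first iteration's walks, the remaining digraph, obtained by deleting their vertices and all attached edges, is strictly smaller and carries the Jordan structure of the restriction of $X$ to the complementary cyclic subspaces, so one closes the argument by induction on the number of iterations. An alternative, perhaps more transparent, route is to verify the rank identity $\rank(X^{k-1})-\rank(X^k)$ equals the total number of algorithm-selected walks of length $\geq k-1$; this reduces everything to a combinatorial statement about maximum vertex-disjoint monotone walks in $D$, provable by a standard matroid/exchange argument and incidentally showing that the tie-breaking choices in the algorithm do not affect the output partition.
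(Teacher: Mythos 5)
Your overall strategy is the same as the paper's (translate powers of $X$ into counts of directed walks in $D$, extract a maximal family of disjoint longest paths, and induct after deleting them), but as written it is a plan rather than a proof, and the one step you yourself flag as the ``main obstacle'' is exactly the step that is left unproved. Concretely, two things go wrong. First, your chain-length assertion is false for walks selected after the first iteration: if $W\colon v_{j_0}\to\cdots\to v_{j_L}$ is chosen in a later iteration, then $L$ is the length of a longest path in the \emph{reduced} digraph, and there may well be walks of length $L+1$ in the original $D$ ending at $v_{j_L}$ that pass through already-deleted vertices; hence $X^{L+1}e_{j_L}\neq 0$ in $V$ and $V_W=\operatorname{span}\{e_{j_L},Xe_{j_L},\dots,X^{L}e_{j_L}\}$ is neither of the claimed dimension profile nor $X$-invariant in general. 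For the same reason the phrase ``the Jordan structure of the restriction of $X$ to the complementary cyclic subspaces'' is not well formed: the complement of the first-iteration cyclic subspaces is not $X$-stable, so there is no restriction to induct on. The paper circumvents precisely this by replacing the standard basis with $e'_{y_{j,r-k}}\defeq A^k e_{y_{j,r}}$ (equivalently, relabelling the vertices of $D$), after which the filtration $\ker A\subsetneq\ker A^2\subsetneq\cdots$ is compatible with the deleted digraph and the induction goes through on the graded pieces $\ker A^{j}/\ker A^{j-1}$ rather than on a purported direct-sum decomposition of $V$.

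Second, even for the first iteration, the identification of the number of disjoint longest paths with $\dim\ker A^{r}/\ker A^{r-1}$ requires an argument in both directions: linear independence of $\{\overline{e_{y_{i,r}}}\}$ (which does follow from disjointness of the chosen paths, since the endpoints and the images $A^{r-1}e_{y_{i,r}}$ involve distinct basis vectors) \emph{and} spanning, i.e.\ that every column of $A^{r-1}$ is accounted for by some chosen path. Your alternative route via the identity $\operatorname{rank}(X^{k-1})-\operatorname{rank}(X^{k})=\#\{\text{selected walks of length}\geq k-1\}$ would indeed settle everything and show independence of tie-breaking, but ``a standard matroid/exchange argument'' is doing all the work there and is not supplied; note that in type $A$ the relevant combinatorial fact is easy because $D$ is a union of monotone segments meeting only at peaks and valleys, but it still has to be stated and checked (and it is exactly the part that becomes delicate in the type $D$ analogue). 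To turn your sketch into a proof, carry out the change of basis, prove the spanning claim for the top graded piece, and then run the induction on the graded quotients.
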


\begin{proof}
    Let $V=\mathbb{C}^n$ be standard representation of $\mathfrak{sl}_{n}$ and $\{e_j\}_{j=1}^n$ be the standard basis of $V$. Given a labeled Dynkin diagram, we will show that the partition of $X$ can be calculated as above. We will use $A$ to denote the matrix of $X$. 
    
     Consider the chain $$\{0\}\subsetneq \ker A\subsetneq \ker A^2\subsetneq \dots \subsetneq \ker A^r=V.$$ Note that if some collection $\{\bar{v_j}\}$ is linearly independent in $\ker A^j/\ker A^{j-1}$, then $\{\overline{Av_j}\}$ is linearly independent in $\ker A^{j-1}/\ker A^{j-2}$, where $\bar{v}$ denotes the image of $v\in V$ in respective quotient. To find the partition of $A$ is equivalent to finding the sequence of numbers $\dim \ker A^j/\ker A^{j-1}$, for $j=1,\dots r$, since $\dim \ker A^j/\ker A^{j-1}-\dim \ker A^{j+1}/\ker A^j$ is the number of Jordan blocks of size $j$.
    
    Define the \emph{height} of $v\in V$ to be the maximal $j$ such that its image $\bar{v}\in \ker A^j/\ker A^{j-1}$ is non-zero.  By definition, if $v$ is of height $j$, then $Av$ is of height $j-1$.
    
    Let $D$ be the digraph defined by the algorithm above. For simplicity, we write $1,\dots, n$ for vertices $v_1,\dots,v_n$ in $D$. It is easy to see $A$ is the adjacency matrix of $D$, i.e., $A_{lj}=1$ if there is an edge $l\to j$ in $D$ and $0$ otherwise. Hence, $(A^k)_{lj}$ corresponds to a path $l\to j$ in $D$ of length $k$. Moreover, every $e_j$ corresponds to the $j^{\text{th}}$ vertex in $D$.
    
    We induct on the length of the longest paths of the digraph $D$.
    
    Let $\{P_i: y_{i,1}\to\dots\to y_{i,r}\}_{i=1}^s$ be a maximal collection of \emph{disjoint} longest paths of length $r-1$ in $D$. Disjoint here means both vertices and edges are distinct. They are indeed non-empty and longest as $\ker A^{r-1}\subsetneq\ker A^r=V$.
    
    We claim that $\{\overline{e_{y_{i,r}}}\}_{i=1}^s$ forms a height $r$ basis in $\ker A^r/\ker A^{r-1}$. Indeed, by construction we have $e_{y_{i,r}}\in \ker A^r\setminus \ker A^{r-1}$ and $A^{r-1}e_{y_{i,r}}=e_{y_{i,1}}+\epsilon e_j$ for some $j\neq y_{i,1}$, where $\epsilon=0$ or $1$. Since $\{P_i\}_{i=1}^s$ are disjoint, the elements in $\{y_{i,r},y_{i,1}\ |\ i=1,\dots,s\}$ are distinct. Hence, $\Span \{e_{y_{i,r}}\} \cap \ker A^{r-1}=0$ and so $\{\overline{e_{y_{i,r}}}\}_{i=1}^s$ are linearly independent. Furthermore, every non-zero entry in $A^{r-1}$ corresponds to a path of length $r-1$ so linearly independent columns of $A^{r-1}$ correspond to exactly end vertices of some paths, i.e., $y_{i,r}$ of some $P_i$. As a result, $\{\overline{e_{y_{i,r}}}\}_{i=1}^s$ spans $\ker A^r/\ker A^{r-1}$. Therefore, we established a one-to-one correspondence between a maximal collection of disjoint longest paths in $\Gamma$ and a basis of $\ker A^r/\ker A^{r-1}$, mapping $P_i$ to $\overline{e_{y_{i,r}}}$.

    Next, we consider the induction step. As $\{\overline{e_{y_{i,r}}}\}_{i=1}^s$ is a basis of $\ker A^r/\ker A^{r-1}$, $\{\overline{Ae_{y_{i,r}}}\}_{i=1}^s$ is a linearly independent set in $\ker A^{r-1}/\ker A^{r-2}$. We want to extend it to a basis and show the extension elements correspond to a maximal collection of disjoint paths in $D\setminus \{P_i\}_{i=1}^s$. (We also need to delete all edges attached to the deleted vertices.)
    
    Let $\{Q_i: y_{i,1}\to\dots\to y_{i,r-1}\}_{i=1}^s$ be the collection of paths $Q_i$ each of which is $P_i$ without the terminal vertex. Then $Q_i$'s are disjoint. Consider a maximal collection of paths $\{Q_t: y_{t,1},\dots, y_{t,r-1}\}_{t=s+1}^{s+m}$ disjoint from $\{P_i\}_{i=1}^s$. Define $e_{y_{j,r-k}}' \defeq A^k e_{y_{j,r}}$ for $k=1,\dots, r-1$, $j=1,\dots,s+m$ and $e_l'=e_l$ for the rest of $l$ not equals to any $y_{j,r-k}$. Observe that $\{e_l'\}_{l=1}^n$ is a basis for $V$. This is easy to see from the graph if we relabel the vertex $l$ in $D$ to $j_1+j_2\dots$ for $e_l'=e_{j_1}+e_{j_2}+\dots$. Call this new digraph $D^\prime$. 
    
    \begin{center}
    \begin{adjustbox}{max width=\textwidth}
        \begin{tabular}{c c c}
            $\tikz[scale=0.5,decoration={markings, mark= at position 0.65 with {\arrow{stealth}}}]{
                \draw[fill=red] (0,2) circle (3pt)node[anchor=east]{1};
                \draw[fill=red] (1,1) circle (3pt) node[anchor=east]{2};
                \draw[fill=red] (2,0) circle (3pt)node[anchor=east]{3};
                \draw[fill=black] (3,1) circle (3pt) node[anchor=east]{4};
                \draw[fill=red] (4,2) circle (3pt) node[anchor=east]{5};
                \draw[fill=red] (5,1) circle (3pt) node[anchor=east]{6};
                \draw[fill=red] (6,0) circle (3pt) node[anchor=east]{7};
                \draw[thick,postaction={decorate},red] (0,2) node[anchor=west, xshift=0.3cm]{$P_1$} -- (1,1) ;
                \draw[thick,postaction={decorate},red] (1,1) -- (2,0);
                \draw[thick,postaction={decorate}] (3,1) -- (2,0);
                \draw[thick,postaction={decorate}] (4,2) -- (3,1);
                \draw[thick,postaction={decorate},red] (4,2) node[anchor=west,xshift=0.3cm]{$P_2$} -- (5,1) ;
                \draw[thick,postaction={decorate},red] (5,1) -- (6,0);
            }$
            &$\implies$&                
            $\tikz[scale=0.5,decoration={markings, mark= at position 0.65 with {\arrow{stealth}}}]{
                \draw[fill=red] (0,2) circle (3pt)node[anchor=east]{1+5};
                \draw[fill=red] (1,1) circle (3pt) node[anchor=east]{2+4};
                \draw[fill=red] (2,0) circle (3pt)node[anchor=east]{3};
                \draw[fill=black] (3,1) circle (3pt) node[anchor=east]{4};
                \draw[fill=red] (4,2) circle (3pt) node[anchor=east]{5};
                \draw[fill=red] (5,1) circle (3pt) node[anchor=east]{6};
                \draw[fill=red] (6,0) circle (3pt) node[anchor=east]{7};
                \draw[thick,postaction={decorate},red] (0,2) node[anchor=west, xshift=0.3cm]{$P_1^\prime$} -- (1,1) ;
                \draw[thick,postaction={decorate},red] (1,1) -- (2,0);
                \draw[thick,postaction={decorate}] (3,1) -- (2,0);
                \draw[thick,postaction={decorate}] (4,2) -- (3,1);
                \draw[thick,postaction={decorate},red] (4,2) node[anchor=west,xshift=0.3cm]{$P_2^\prime$} -- (5,1) ;
                \draw[thick,postaction={decorate},red] (5,1) -- (6,0);
            }$
                \\
                $D$ & & $D^\prime$\\
        \end{tabular}.
    \end{adjustbox}
    \end{center}
    
    Now 
    \begin{equation*}
    \begin{cases}
      e_j'\notin \ker A^{r-1} & \text{if}\ j=y_{i,r} \text{ for some}\  i=1,\dots s; \\
      e_j'\in \ker A^{r-1}\setminus \ker A^{r-2} & \text{if}\ j=y_{i,r-1} \text{ for some}\  i=1,\dots s+m; \\
      e_j'\in \ker A^{r-2} & \text{otherwise.}
    \end{cases}
  \end{equation*}
    
    Hence, we have $\{\overline{e_{y_{i,r-1}}'}\}_{i=1}^{s+m}$ is a basis in $\ker A^{r-1}/\ker A^{r-2}$. As $\{Q_t\}_{t=s+1}^{s+m}$ are disjoint from $\{P_i\}_{i=1}^s$, it is equivalent to consider the maximal collection of disjoint paths in $D\setminus \{P_i\}_{i=1}^s$. We have shown that extending $\{\overline{Ae_{y_{i,r}}}\}_{i=1}^s$ to a basis in $\ker A^{r-1}/\ker A^{r-2}$ is equivalent to considering a maximal collection of disjoint paths in $D\setminus \{P_i\}_{i=1}^s$. In particular, the number of extension vectors and the new paths are both $m$. Hence, we repeat the proof on smaller graphs and it completes the proof.
\end{proof}

\subsubsection{Type $D$}
We choose the root vectors to be $X_{\alpha_i}=E_{i,i+1}-E_{i+1+n,i+n}$, $1\leq i\leq n-1$; $X_{\alpha_n}=E_{n-1,2n}-E_{n,2n-1}$, and $X_{-\alpha_i}$ to be the transpose of $X_{\alpha_i}$.
    \begin{algorithm}\label{algorithm:partitionD} 
        Given a labeled Dynkin diagram $\Gamma$ of type $D$, we construct a digraph $D$ and use it to determine the partition as follows.
        
        \begin{enumerate}[label=\textbf{Step \arabic*}.]
            \item Construct a graph $D'$ from $\Gamma$ as in the picture. Each vertex in $\Gamma$ corresponds to exactly two edges in $D'$. We use the number of the vertex in $\Gamma$ to label the corresponding two edges in $D'$. We also label the vertices in $D'$ as $v_1,\dots,v_{2n}$.

            \begin{center}
                \begin{tabular}{ c c c }
                    $$\dynkin[%
                    edge length=1cm,
                     labels={1,2,n{\tiny -}2,n{\tiny -}1,n}]D{**.***}
                    $$ & $\implies$ &
    
                    \begin{tikzpicture}[auto,scale=0.75,transform shape,-,main node/.style={circle,draw}, baseline=(current bounding box.center)]
                        \vertex (1) at (1,0) {$v_1$};
                        \vertex (2) at (2,0){$v_2$};
                        \vertex (3) at (3.5,0){ $v_{n-1}$};
                        \vertex (4) at (4.5,1){$v_n$};
                        \vertex (5) at (4.5,-1){\small $v_{n+1}$};
                        \vertex (6) at (5.5,0){\small $v_{n+2}$};
                        \vertex (7) at (7,0) {\small $v_{2n-1}$};
                        \vertex (8) at (8,0){$v_{2n}$};
                	\path[every node/.style={font=\sffamily\small}]
                	(1) edge node {$1$} (2)
                	(2) edge[thick,dotted] (3)
                	(3) edge node {$n{\tiny -}1$} (4)
                	(3) edge node [below left] {$n$} (5)
                	(4) edge node {$n$} (6)
                	(5) edge node [below right] {$n{\tiny -}1$} (6)
                	(6) edge[thick,dotted] (7)
                	(7) edge node {$1$} (8);
    
                        \end{tikzpicture}\\
                    $\Gamma$ & & $D'$\\
                \end{tabular}.
            \end{center}

             \item We assign arrows to edges in $D'$ by the following rules. If $v\in J_1\ (resp.\ J_2)$ is a vertex in $\Gamma$, draw a forward (resp. backward) arrow on the two corresponding edges in $D'$. Here, a forward arrow means drawing from the vertex of a smaller number to the other with a larger number, and a backward arrow means the opposite. We call this directed graph $D$.
             
            \item Repeat the following until the digraph $D$ is empty:
                \begin{enumerate}
                    \item Let $C$ be the collection of longest directed paths of length $n_i$.
                    \item Let $P$ be a maximal subset of $C$ consists of disjoint paths. Any choice of maximal subsets is feasible. Let $k_i$ be the cardinality of $P$.
                    \item Delete $P$ from the digraph $D$. If a vertex is deleted, also delete all edges attached.
                    \item Record $n_i+1$ for $k_i$ number of times in the resulting partition.
                \end{enumerate}
            \item Output the resulting partition.
        \end{enumerate}
    \end{algorithm}
\begin{example} For type $D_{6}$, consider the following labeled Dynkin diagram
    $$\dynkin[%
    edge length=.75cm,
    labels={J_2,J_2,J_1,J_1,J_1,J_2}]D{6}.$$
    Then the above algorithm produces (we add some inclination to the edges so that the paths become apparent.)
    \begin{center}
\begin{tabular}{ c c c c}
   digraph &$
    \tikz[scale=0.5,baseline=(current bounding box.center),decoration={markings, mark= at position 0.65 with {\arrow{stealth}}}]{
    \draw[fill=black] (-5,1) circle (3pt) node[anchor=north]{1};
    \draw[fill=black] (-4,2) circle (3pt) node[anchor=north]{2};
    \draw[fill=red] (-3,3) circle (3pt) node[anchor=north]{3};
    \draw[fill=red] (-2,2) circle (3pt) node[anchor=north]{4};
    \draw[fill=red] (-1,1) circle (3pt) node[anchor=north]{5};
    \draw[fill=red] (0,0) circle (3pt) node[anchor=north]{6};
    \draw[fill=red] (0,2) circle (3pt) node[anchor=north]{7};
    \draw[fill=red] (1,1) circle (3pt) node[anchor=north]{8};
    \draw[fill=red] (2,0) circle (3pt) node[anchor=north]{9};
    \draw[fill=red] (3,-1) circle (3pt) node[anchor=north]{10};
    \draw[fill=black] (4,0) circle (3pt) node[anchor=north]{11};
    \draw[fill=black] (5,1) circle (3pt) node[anchor=north]{12};
    \draw[thick,postaction={decorate}] (-4,2) -- (-5,1);
    \draw[thick,postaction={decorate}] (-3,3) -- (-4,2);
    \draw[thick,postaction={decorate},red] (-3,3) -- (-2,2);
    \draw[thick,postaction={decorate},red] (-2,2) -- (-1,1);
    \draw[thick,postaction={decorate},red] (-1,1) -- (0,0);
    \draw[thick,postaction={decorate}]  (0,2) -- (-1,1) ;
    \draw[thick,postaction={decorate},red] (0,2) -- (1,1);
    \draw[thick,postaction={decorate}]  (1,1)-- (0,0);
    \draw[thick,postaction={decorate},red] (1,1) -- (2,0);
    \draw[thick,postaction={decorate},red] (2,0) -- (3,-1);
    \draw[thick,postaction={decorate}] (4,0) -- (3,-1);
    \draw[thick,postaction={decorate}] (5,1) -- (4,0);

    }$ & $\implies$ & $
    \tikz[scale=0.5,baseline=(current bounding box.center),decoration={markings, mark= at position 0.65 with {\arrow{stealth}}}]{
    \draw[fill=red] (-2,1) circle (3pt) node[anchor=north]{1};
    \draw[fill=red] (-1,2) circle (3pt) node[anchor=north]{2};
    \draw[fill=red] (1,0) circle (3pt) node[anchor=north]{11};
    \draw[fill=red] (2,1) circle (3pt) node[anchor=north]{12};
    \draw[thick,postaction={decorate},red] (-1,2) -- (-2,1);
    \draw[thick,postaction={decorate},red] (2,1) -- (1,0);

    }$\\ 
 partition&4+4 && 2+2 .\\
\end{tabular}

 The result partition is $12=4+4+2+2$.
\end{center}
\end{example}

\begin{proposition}
     Given a labeled Dynkin diagram $(\Gamma,J_1,J_2)$ of type $D$, the partition of $X = \sum_{\alpha\in J_1}X_\alpha+\sum_{\beta\in J_2}X_{-\beta}$ is given by the above \cref{algorithm:partitionD}.
   
\begin{proof}
We will give a proof similar to the type $A$ case. Again for simplicity, we use numbers $1,\dots, 2n$ for vertices $v_1,\dots,v_{2n}$ in $D$.

For such $X$, for $n+1\leq j\leq 2n$, exchange the $j^\text{th}$ column with the $(3n-j+1)^\text{th}$ column and the $j^\text{th}$ row with the $(3n-j+1)^\text{th}$ row. In order words, define a new matrix $A\defeq\gamma(X)$, where $\gamma$ is defined to be a linear map satisfying 
$$\gamma(X_{\alpha_j})=E_{j,j+1}-E_{2n-j,2n-j+1}, 1\leq j\leq n-1;\ \ \gamma(X_{\alpha_n})= E_{n-1,n+1}-E_{n,n+2}. $$ Note that this map preserves Jordan normal forms. Let $D$ be the digraph constructed as in the algorithm. Then $A$ is the ``signed" adjacency matrix of $D$, namely,

\begin{equation*}
  A_{ij}=
    \begin{cases}
      1, & \text{if $l\to j$ is an edge in $D$, where $l,j\in$\{$1,\dots, n, n+1$\} are node labels in $D$,}\\
      -1, & \text{if $l\to j$ is an edge in $D$, where $l,j\in$\{$n,n+1,\dots, 2n$\} are node labels in $D$,}\\
      0, & \text{otherwise.}
    \end{cases}       
\end{equation*}

Now we induct on the length of the longest path, similar to what we have done in type $A$. Let $\{P_i:y_{i,1}\to\cdots\to y_{i,r}\}_{i=1}^s$ be a maximal collection of disjoint longest paths in $D$, where $r$ is the smallest number such that $A^r = 0$. Again, \textit{disjoint} means both vertices and edges are distinct. We claim that $\{\overline{e_{y_{i,r}}}\}_{i=1}^s$ is a basis of $\ker A^r/\ker A^{r-1}$. Indeed, by similar reasoning to type $A$ they are linearly independent and $e_{y_{i,r}}\in \ker A^r\setminus \ker A^{r-1}$. It remains to show $\{A^{r-1}e_{y_{i,r}}\}_{i=1}^s$ are linearly independent so that $s=\dim \Im A^{r-1}=\dim \ker A^r/\ker A^{r-1}$. 

Observe that the $l$-th coordinate of $|A^{r-1}e_j|$ is $(|A^{r-1}|)_{lj}$ which is the number of paths of length $r-1$ from $l$ to $j$. Here $|\cdot|$ is the absolute value. These paths are denoted by $Q_{j,l,1}, Q_{j,l,2},\cdots$. We claim that $\{y_{i,r}\}_{i=1}^s$ can be chosen from $\{j\mid A^{r-1}e_j\neq 0\}$ such that $\{A^{r-1}e_{y_{i,r}}\}$ is a basis for $\Im A^{r-1}$, equivalently, disjoint $P_i$ can be chosen from $\{Q_{j,l,k}\}_k$ above. It is only critical to check the cases where some $Q_{j,l,k}$ is involved in the middle ``diamond" of $D$, otherwise, the proof is identical to that of type $A$.

Note that the Jordan normal form will not change if we switch $J_1$ and $J_2$ (transpose $A$). Also by symmetry, switching the last two vertices in $\Gamma$ will not add more cases. We only have three cases for the last three vertices in $\Gamma$.

{
\centering
\begin{longtable}[H]{| c | c | c | M{6cm} |}
    \hline
    Case & $\Gamma$ & $D$ &  \\
    \hline
    
    1 & \tikz[scale=0.5,decoration={markings, mark= at position 0.5 with {\arrow{stealth}}},baseline=(current bounding box.center),every circle node/.style={draw}]{
        \draw[fill=black] (1,0) circle (3pt) node[anchor=south] {$J_1$};
        \draw[fill=black] (2,1) circle (3pt) node[anchor=south] {$J_1$};
        \draw[fill=black] (2,-1) circle (3pt) node[anchor=south] {$J_1$};
        \draw[thick,dotted]  (0,0) -- (1,0);
        \draw (1,0) -- (2,1);
        \draw (1,0) -- (2,-1);
    }
             & 
             
        \tikz[scale=0.5,decoration={markings, mark= at position 0.5 with {\arrow{stealth}}},baseline=-25pt,every circle node/.style={draw}]{
        \draw[fill=black] (0,0) circle (3pt) node[anchor=south] {$l$};
        \draw[fill=black] (1,-1) circle (3pt);
        \draw[fill=black] (2,-2) circle (3pt);
        \draw[fill=black] (3.5,-2.5) circle (3pt);
        \draw[fill=black] (2.5,-3.5) circle (3pt);
        \draw[fill=black] (4,-4) circle (3pt);
        \draw[fill=black] (5,-5) circle (3pt);
        \draw[fill=black] (6,-6) circle (3pt) node[anchor=south] {$j$};
        \draw[thick,dotted]  (0,0) -- (1,-1);
        \draw[postaction={decorate}] (1,-1) -- (2,-2);
        \draw[postaction={decorate}] (2,-2) -- (3.5,-2.5);
        \draw[postaction={decorate}] (2,-2) -- (2.5,-3.5);
        \draw[postaction={decorate}] (3.5,-2.5) -- (4,-4);
        \draw[postaction={decorate}] (2.5,-3.5) -- (4,-4);
        \draw[postaction={decorate}] (4,-4) -- (5,-5);
        \draw[thick,dotted,] (5,-5) -- (6,-6);
    } & 
        In this case $\{Q_{j,l,k}\}_k$ must all go through the ``diamond". We may choose $j=y_{i,r}$ for some $i$.
    \\ \hline

    2.1 & \tikz[scale=0.5,decoration={markings, mark= at position 0.5 with {\arrow{stealth}}},baseline=(current bounding box.center),every circle node/.style={draw}]{
        \draw[fill=black] (1,0) circle (3pt) node[anchor=south] {$J_1$};
        \draw[fill=black] (2,1) circle (3pt) node[anchor=south] {$J_1$};
        \draw[fill=black] (2,-1) circle (3pt) node[anchor=south] {$J_2$};
        \draw[thick,dotted]  (0,0) -- (1,0);
        \draw (1,0) -- (2,1);
        \draw (1,0) -- (2,-1);
    }
             & 
             
        \tikz[scale=0.5,decoration={markings, mark= at position 0.5 with {\arrow{stealth}}},baseline=-25pt,every circle node/.style={draw}]{
        \draw[fill=black] (0,0) circle (3pt) node[anchor=south] {$l_1$};
        \draw[fill=black] (1,-1) circle (3pt);
        \draw[fill=black] (2,-2) circle (3pt);
        \draw[fill=black] (3,-1) circle (3pt) node[anchor=south] {$l_2$};
        \draw[fill=black] (3,-3) circle (3pt) node[anchor=north] {$j_1$};
        \draw[fill=black] (4,-2) circle (3pt);
        \draw[fill=black] (5,-3) circle (3pt);
        \draw[fill=black] (6,-4) circle (3pt) node[anchor=south] {$j_2$};
        \draw[thick,dotted]  (0,0) -- (1,-1);
        \draw[postaction={decorate}] (1,-1) -- (2,-2);
        \draw[postaction={decorate}] (3,-1) -- (2,-2);
        \draw[postaction={decorate}] (2,-2) -- (3,-3);
        \draw[postaction={decorate}] (3,-1) -- (4,-2);
        \draw[postaction={decorate}] (4,-2) -- (3,-3);
        \draw[postaction={decorate}] (4,-2) -- (5,-3);
        \draw[thick,dotted] (5,-3) -- (6,-4);
    } & 
        {
        \centering
        \begin{tabular}{c|c c}
            $(|A^{r-1}|)_{lj}$ & $j_1$ & $j_2$ \\
            \hline
             $l_1$ & $1$ & $0$ \\
             $l_2$ & $2$ & $1$ \\
        \end{tabular}
        }
        
        Therefore $\{A^{r-1}e_{j_1},A^{r-1}e_{j_2}\}$ are linearly independent and $j_1, j_2$ are in some $\{y_{i,r}\}_i$.
    \\ \hline

    2.2 & \tikz[scale=0.5,decoration={markings, mark= at position 0.5 with {\arrow{stealth}}},baseline=(current bounding box.center),every circle node/.style={draw}]{
        \draw[fill=black] (1,0) circle (3pt) node[anchor=south] {$J_1$};
        \draw[fill=black] (2,1) circle (3pt) node[anchor=south] {$J_1$};
        \draw[fill=black] (2,-1) circle (3pt) node[anchor=south] {$J_2$};
        \draw[thick,dotted]  (0,0) -- (1,0);
        \draw (1,0) -- (2,1);
        \draw (1,0) -- (2,-1);
    }
             & 
             
        \tikz[scale=0.5,decoration={markings, mark= at position 0.5 with {\arrow{stealth}}},baseline=-25pt,every circle node/.style={draw}]{
        \draw[fill=black] (-1,-1) circle (3pt);
        \draw[fill=black] (0,-2) circle (3pt);
        \draw[fill=black] (1,-1) circle (3pt) node[anchor=south] {$l_1$};
        \draw[fill=black] (2,-2) circle (3pt);
        \draw[fill=black] (3,-1) circle (3pt) node[anchor=south] {$l_2$};
        \draw[fill=black] (3,-3) circle (3pt) node[anchor=north] {$j_1$};
        \draw[fill=black] (4,-2) circle (3pt);
        \draw[fill=black] (5,-3) circle (3pt) node[anchor=north] {$j_2$};
        \draw[fill=black] (6,-2) circle (3pt);
        \draw[fill=black] (7,-3) circle (3pt);
        \draw[thick,dotted]   (-1,-1) -- (0,-2) ;
        \draw[postaction={decorate}] (1,-1) -- (0,-2);
        \draw[postaction={decorate}] (1,-1) -- (2,-2);
        \draw[postaction={decorate}] (3,-1) -- (2,-2);
        \draw[postaction={decorate}] (2,-2) -- (3,-3);
        \draw[postaction={decorate}] (3,-1) -- (4,-2);
        \draw[postaction={decorate}] (4,-2) -- (3,-3);
        \draw[postaction={decorate}] (4,-2) -- (5,-3);
        \draw[postaction={decorate}]  (6,-2) -- (5,-3) ;
        \draw[thick,dotted]  (6,-2) -- (7,-3) ;
    } & 
        {
        \centering
        \begin{tabular}{c|c c}
            $(|A^{r-1}|)_{lj}$ & $j_1$ & $j_2$ \\
            \hline
             $l_1$ & $1$ & $0$ \\
             $l_2$ & $2$ & $1$ \\
        \end{tabular}
        }
        
        Therefore $\{A^{r-1}e_{j_1},A^{r-1}e_{j_2}\}$ are linearly independent and $j_1, j_2$ are in some $\{y_{i,r}\}_i$ independent of the choice of the rest $\{y_{i,r}\}_i$ in $D$.
    \\ \hline

    2.3 & \tikz[scale=0.5,decoration={markings, mark= at position 0.5 with {\arrow{stealth}}},baseline=(current bounding box.center),every circle node/.style={draw}]{
        \draw[fill=black] (1,0) circle (3pt) node[anchor=south] {$J_1$};
        \draw[fill=black] (2,1) circle (3pt) node[anchor=south] {$J_1$};
        \draw[fill=black] (2,-1) circle (3pt) node[anchor=south] {$J_2$};
        \draw[thick,dotted]  (0,0) -- (1,0);
        \draw (1,0) -- (2,1);
        \draw (1,0) -- (2,-1);
    }
             & 
             
        \tikz[scale=0.5,decoration={markings, mark= at position 0.5 with {\arrow{stealth}}},baseline=-25pt,every circle node/.style={draw}]{
        \draw[fill=black] (-2,-2) circle (3pt) ;
        \draw[fill=black] (-1,-3) circle (3pt) node[anchor=north] {$j_1$};
        \draw[fill=black] (0,-2) circle (3pt);
        \draw[fill=black] (1,-1) circle (3pt) node[anchor=south] {$l_1$};
        \draw[fill=black] (2,-2) circle (3pt);
        \draw[fill=black] (3,-1) circle (3pt) node[anchor=south] {$l_2$};
        \draw[fill=black] (3,-3) circle (3pt) node[anchor=north] {$j_2$};
        \draw[fill=black] (4,-2) circle (3pt);
        \draw[fill=black] (5,-3) circle (3pt) node[anchor=north] {$j_3$};
        \draw[fill=black] (6,-2) circle (3pt);
        \draw[fill=black] (7,-1) circle (3pt)  node[anchor=south] {$l_3$};
        \draw[fill=black] (8,-2) circle (3pt);
        \draw[thick,dotted]   (-1,-3) -- (-2,-2) ;
        \draw[postaction={decorate}] (0,-2) -- (-1,-3);
        \draw[postaction={decorate}] (1,-1) -- (0,-2);
        \draw[postaction={decorate}] (1,-1) -- (2,-2);
        \draw[postaction={decorate}] (3,-1) -- (2,-2);
        \draw[postaction={decorate}] (2,-2) -- (3,-3);
        \draw[postaction={decorate}] (3,-1) -- (4,-2);
        \draw[postaction={decorate}] (4,-2) -- (3,-3);
        \draw[postaction={decorate}] (4,-2) -- (5,-3);
        \draw[postaction={decorate}]  (6,-2) -- (5,-3);
        \draw[postaction={decorate}]  (7,-1) -- (6,-2);
        \draw[thick,dotted]  (8,-2) -- (7,-1) ;
    } & 
        {
        \centering
        \begin{tabular}{c|c c c}
            $(|A^{r-1}|)_{lj}$ & $j_1$ & $j_2$ & $j_3$\\
            \hline
             $l_1$ & $1$ & $1$ & $0$ \\
             $l_2$ & $0$ & $2$ & $1$ \\
             $l_3$ & $0$ & $0$ & $1$ \\
        \end{tabular}
        }
        
        Therefore $\{A^{r-1}e_{j_1},A^{r-1}e_{j_2},A^{r-1}e_{j_3}\}$ are linearly independent and $j_1, j_2,j_3$ are in some $\{y_{i,r}\}_i$ independent of the choice of the rest $ \{y_{i,r}\}_i$ in $D$.
    \\ \hline

    3.1 & \tikz[scale=0.5,decoration={markings, mark= at position 0.5 with {\arrow{stealth}}},baseline=(current bounding box.center),every circle node/.style={draw}]{
        \draw[fill=black] (1,0) circle (3pt) node[anchor=south] {$J_1$};
        \draw[fill=black] (2,1) circle (3pt) node[anchor=south] {$J_2$};
        \draw[fill=black] (2,-1) circle (3pt) node[anchor=south] {$J_2$};
        \draw[thick,dotted]  (0,0) -- (1,0);
        \draw (1,0) -- (2,1);
        \draw (1,0) -- (2,-1);
    }
             & 
             
        \tikz[scale=0.5,decoration={markings, mark= at position 0.5 with {\arrow{stealth}}},baseline=-25pt,every circle node/.style={draw}]{
        \draw[fill=black] (0,0) circle (3pt) node[anchor=south] {$l_1$};
        \draw[fill=black] (1,-1) circle (3pt);
        \draw[fill=black] (2,-2) circle (3pt) node[anchor=north] {$j_1$};
        \draw[fill=black] (2.5,-0.5) circle (3pt);
        \draw[fill=black] (3.5,-1.5) circle (3pt);
        \draw[fill=black] (4,0) circle (3pt)  node[anchor=south] {$l_2$};
        \draw[fill=black] (5,-1) circle (3pt);
        \draw[fill=black] (6,-2) circle (3pt) node[anchor=south] {$j_2$};
        \draw[thick,dotted]  (0,0) -- (1,-1);
        \draw[postaction={decorate}] (1,-1) -- (2,-2);
        \draw[postaction={decorate}] (2.5,-0.5) -- (2,-2);
        \draw[postaction={decorate}] (3.5,-1.5) -- (2,-2);
        \draw[postaction={decorate}] (4,0) -- (2.5,-0.5);
        \draw[postaction={decorate}] (4,0) -- (3.5,-1.5);
        \draw[postaction={decorate}] (4,0) -- (5,-1);
        \draw[thick,dotted] (5,-1) -- (6,-2);
    } & 
        {
        \centering
        \begin{tabular}{c|c c}
            $(|A^{r-1}|)_{lj}$ & $j_1$ & $j_2$ \\
            \hline
             $l_1$ & $1$ & $0$ \\
             $l_2$ & $2$ & $1$ \\
        \end{tabular}
        }
        
        Therefore $\{A^{r-1}e_{j_1},A^{r-1}e_{j_2}\}$ are linearly independent and $j_1, j_2$ are in some $\{y_{i,r}\}_i$.
    \\ \hline

    3.2 & \tikz[scale=0.5,decoration={markings, mark= at position 0.5 with {\arrow{stealth}}},baseline=(current bounding box.center),every circle node/.style={draw}]{
        \draw[fill=black] (1,0) circle (3pt) node[anchor=south] {$J_1$};
        \draw[fill=black] (2,1) circle (3pt) node[anchor=south] {$J_2$};
        \draw[fill=black] (2,-1) circle (3pt) node[anchor=south] {$J_2$};
        \draw[thick,dotted]  (0,0) -- (1,0);
        \draw (1,0) -- (2,1);
        \draw (1,0) -- (2,-1);
    }
             & 
             
        \tikz[scale=0.5,decoration={markings, mark= at position 0.5 with {\arrow{stealth}}},baseline=-25pt,every circle node/.style={draw}]{
        \draw[fill=black] (0,-2) circle (3pt);
        \draw[fill=black] (1,-1) circle (3pt);
        \draw[fill=black] (2,-2) circle (3pt) node[anchor=north] {$j$};
        \draw[fill=black] (2.5,-0.5) circle (3pt);
        \draw[fill=black] (3.5,-1.5) circle (3pt);
        \draw[fill=black] (4,0) circle (3pt)  node[anchor=south] {$l$};
        \draw[fill=black] (5,-1) circle (3pt);
        \draw[fill=black] (6,0) circle (3pt);
        \draw[thick,dotted] (1,-1) -- (0,-2);
        \draw[postaction={decorate}] (1,-1) -- (2,-2);
        \draw[postaction={decorate}] (2.5,-0.5) -- (2,-2);
        \draw[postaction={decorate}] (3.5,-1.5) -- (2,-2);
        \draw[postaction={decorate}] (4,0) -- (2.5,-0.5);
        \draw[postaction={decorate}] (4,0) -- (3.5,-1.5);
        \draw[postaction={decorate}] (4,0) -- (5,-1);
        \draw[thick,dotted] (6,0) -- (5,-1);
    } & 
        In this case $\{Q_{j,l,k}\}_k$ must all go through the ``diamond". We may choose $j=y_{i,r}$ for some $i$.
    \\ \hline
\end{longtable}
}

  From the above discussion, we always have $\{j:A^{r-1}e_j\neq 0\}=\{y_{i,r}\}_i$ by merging the choices in the ``diamond" part and other parts. Hence, $\{\overline{e_{y_{i,r}}}\}_{i=1}^s$ is a basis of $\ker A^r/\ker A^{r-1}$.

  For the induction step, the same proof for type $A$ applies. 
\end{proof}
\end{proposition}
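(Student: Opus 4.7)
My plan is to adapt the type $A$ argument to type $D$, with the main new difficulty being the sign cancellations introduced by the orthogonality constraint defining $\mathfrak{so}_{2n}$. First, I would change basis on the standard representation $\mathbb{C}^{2n}$ so that the matrix of $X$ becomes a \emph{signed adjacency matrix} $A$ of the digraph $D$ built in \cref{algorithm:partitionD}: entries lie in $\{0,\pm1\}$ and are supported precisely on the edges of $D$, with a fixed sign rule depending on whether the edge lies in the ``upper'' or ``lower'' half of $D$. A convenient realization is to swap the $j^{\text{th}}$ and $(3n-j+1)^{\text{th}}$ coordinates for $n+1 \le j \le 2n$; this conjugation preserves Jordan block sizes, so it suffices to compute the partition of $A$.

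With this setup the partition is recovered from $\dim \ker A^k/\ker A^{k-1}$, and I would argue by downward induction on $k$, starting at the top layer $k=r$, where $r-1$ is the length of the longest directed path in $D$. Fix a maximal disjoint family $\{P_i : y_{i,1}\to \cdots \to y_{i,r}\}_{i=1}^s$ of longest paths; the central claim is that $\{\overline{e_{y_{i,r}}}\}_{i=1}^s$ is a basis of $\ker A^r/\ker A^{r-1}$. Linear independence transfers directly from the type $A$ proof: disjointness makes the terminal vertices $y_{i,r}$ distinct, each $e_{y_{i,r}}$ lies in $\ker A^r\setminus \ker A^{r-1}$, and the vectors $A^{r-1}e_{y_{i,r}}$ are supported on the distinct starting vertices $y_{i,1}$. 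Once the top layer is identified, the induction step proceeds exactly as in type $A$: change basis so that $A^k e_{y_{i,r}}$ absorbs the coordinate at $y_{i,r-k}$, delete the paths $P_i$ from the digraph, and recurse on the smaller graph.

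The genuinely new obstacle is the spanning part of the basis claim, namely that any $j$ with $A^{r-1}e_j\neq 0$ must occur as some $y_{i,r}$. In type $A$ this was automatic because $(A^{r-1})_{lj}$ unsignedly counts paths from $l$ to $j$; in type $D$, however, the signs in $A$ make $(A^{r-1})_{lj}$ a \emph{signed} path count, and paths that traverse the ``diamond'' at the branching vertex of the $D_n$ Dynkin diagram can in principle come in pairs of opposite signs and cancel. I would resolve this by a finite case analysis on the $J_1/J_2$ labels of the last three simple roots $\alpha_{n-2}, \alpha_{n-1}, \alpha_n$, using the symmetries $J_1\leftrightarrow J_2$ (transposition) and $\alpha_{n-1}\leftrightarrow \alpha_n$ (diagram automorphism) to cut down to a handful of configurations. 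In each case I would exhibit the relevant $2\times 2$ or $3\times 3$ submatrix of $|A^{r-1}|$ indexed by the terminal vertices of longest paths near the diamond, and verify that it has full column rank, which both preserves linear independence of $\{\overline{A^{r-1}e_{y_{i,r}}}\}$ and identifies precisely which $j$ contribute to $\Im A^{r-1}$. This diamond case analysis is where essentially all the extra work of the type $D$ proof is concentrated; everything else is structurally parallel to type $A$.
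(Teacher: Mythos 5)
Your proposal follows the paper's own proof essentially step for step: the same coordinate swap producing a signed adjacency matrix of the digraph $D$, the same downward induction on the length of the longest path with linear independence transferred from type $A$, and the same resolution of the spanning/sign-cancellation issue by a finite case analysis on the labels of $\alpha_{n-2},\alpha_{n-1},\alpha_n$ (reduced via the $J_1\leftrightarrow J_2$ and $\alpha_{n-1}\leftrightarrow\alpha_n$ symmetries) in which one exhibits the $2\times 2$ or $3\times 3$ submatrices of $|A^{r-1}|$ near the diamond and checks full rank. This is the argument given in the paper, so the proposal is correct and not a genuinely different route.
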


\begin{remark}\label{rmk: partitiontype}
    Observe that the partition produced by the above algorithm satisfies the following properties: either one odd number $\neq 1$ occurs in odd multiplicity while $1$ also occurs in odd multiplicity, or every number occurs in even multiplicity.
\end{remark}


    \subsection{From partition to the minimal Levi subalgebras}
    
    In this section, we will complete the proof of \cref{thm:regularity} for type $A$ and $D$. For every partition $\mathbf{d}$ that occurs in the last section, we construct a nilpotent element $\tilde{X}$ with partition $\mathbf d$ that is regular in some Levi subalgebra $\mathfrak l_J$. To see that $\tilde{X}$ here indeed has the expected partition, one can construct the respective $\mathfrak{sl}_2$ triples and look at the eigenvalues of the neutral elements. We refer to \cite[\S5.2]{CM} for details. 

\subsubsection{Type $A$} \label{constructA}
    The conclusion of the theorem is obvious in type $A$ since every nilpotent element is regular in its minimal Levi subalgebra. However, we still provide the construction here.
    
    Given $\mathbf{d}=[d_1,d_2,...,d_l]$, consider
    \begin{align*} 
        &J\defeq \{\alpha_1,\alpha_2,\cdots,\alpha_{d_1-1},\widehat{{\alpha}_{d_1}},\alpha_{d_1+1},\cdots,\alpha_{d_1+d_2-1},\widehat{{\alpha}_{d_1+d_2}},\alpha_{d_1+d_2+1},\cdots\}, \\
        &\tilde{X}\defeq \sum_{\alpha\in J}X_\alpha .
    \end{align*}
    Here the hat $\widehat\alpha$ means omission. Then $\tilde{X}$ has the partition $\mathbf d$ and is regular in $\mathfrak l_J$. The Levi subgroup $L_J$ (and its Lie algebra $\mathfrak l_J$) is of type $\prod_{i=1}^l A_{d_i-1}$.




\subsubsection{Type $D$} \label{constructD}
  The partition is of the form either $\mathbf d_1 = [(k_1+1)^2,(k_2+1)^2,...,(k_p+1)^2,2q-1,1]$ or $\mathbf d_2 = [(k_1+1)^2,(k_2+1)^2,...,(k_r+1)^2]$ where $r\geq 1$ (\cref{rmk: partitiontype}).

\begin{enumerate}

 \item For $\mathbf{d}=\mathbf{d_1}$, take
     \begin{align*}
     &J \defeq\{\alpha_1,\cdots,\alpha_{k_1-1},\widehat{{\alpha}_{k_1}},\alpha_{k_1+1},\cdots,\alpha_{k_1+k_2-1},\widehat{{\alpha}_{k_1+k_2}},\alpha_{k_1+k_2+1},\cdots,\widehat{\alpha_{n-q}},\alpha_{n-q+1},...,\alpha_{n-1},\alpha_n\},\\
     &\tilde{X}\defeq \sum_{\alpha\in J}X_{\alpha} 
     \end{align*}
     Then $\tilde{X}$ has the partition $\mathbf d_1$ and is regular in $\mathfrak l_J$. The Levi subgroup $L_J$ (and its Lie algebra) is of type $\prod_{i=1}^pA_{k_i}\times D_q$.
     
  \item For $\mathbf{d}=\mathbf{d_2}$, and $\mathbf d_2$ not a very even partition (\ref{partition}). Take
  \begin{align*}
  &J \defeq \{\alpha_1,\cdots,\alpha_{k_1-1},\widehat{{\alpha}_{k_1}},\alpha_{k_1+1},\cdots,\alpha_{k_1+k_2-1},\widehat{{\alpha}_{k_1+k_2}},\alpha_{k_1+k_2+1},\cdots\},\\
  & \tilde{X}\defeq\sum_{\alpha\in J}X_{\alpha}.
  \end{align*}
  Then $\tilde{X}$ has the partition $\mathbf d_2$ and is regular in $\mathfrak l_J$. The Levi subgroup $L_J$ (and its Lie algebra) is of type $\prod_{i=1}^r A_{k_i}$.
  
  \item For $\mathbf{d}=\mathbf{d_2}$, and $\mathbf d_2$ a very even partition. According to the algorithm in the last section, there are only four possible labelings for the last four vertices:
$$
\dynkin[%
edge length=.75cm,
labels*={J_1,J_1,J_1,J_2}]D4, \dynkin[%
edge length=.75cm,
labels*={J_1,J_1,J_2,J_1}]D4, 
\dynkin[%
edge length=.75cm,
labels*={J_2,J_2,J_2,J_1}]D4, 
\dynkin[%
edge length=.75cm,
labels*={J_2,J_2,J_1,J_2}]D4.$$

We only consider the first case, and the other cases follow by applying \cref{flip} and \cref{DiagramAuto}. Write the respective nilpotent element $X=X_{\alpha_{n-2}}+X_{-\alpha_n}+Y$ where $Y = \sum_{\alpha\in J_1\setminus\{\alpha_{n-2}\}}X_{\alpha}+\sum_{\beta\in J_2\setminus\{\alpha_n\}}X_{-\beta}$. Then $Y$ and $X_{-\alpha_n}$ are stabilized by $u_{-\alpha_{n-2}-\alpha_n}(a)$, while $\text{Ad}(u_{-\alpha_{n-2}-\alpha_n}(a))X_{\alpha_{n-2}} = X_{\alpha_{n-2}}+caX_{-\alpha_n}$ for some nonzero constant $c$. Hence, we conjugate $X$ by $u_{-\alpha_{n-2}-\alpha_n}(-1/c)$ gives $\tilde{X}=\sum_{\alpha\in J_1}X_{\alpha}+\sum_{\beta\in J_2\setminus{\alpha_n}}X_{-\beta}$, which lies in the Levi subalgebra $\mathfrak l_J$ with $J=I\setminus\{\alpha_n\}$ and we can use the result in type $A$.
  \end{enumerate}


This completes the proof of \cref{thm:regularity} for type $A$ and $D$.

\subsection{The proof of \cref{theorem:weightthm} for type $A$ and $D$} \label{section:weightproofAD}

\begin{proof}
    We need to show that the $J$ produced by the weight algorithm is a possible output of the \cref{algorithm:partitionA} and \cref{algorithm:partitionD}.
    
    For type $A$, the \cref{algorithm: weight_alg} essentially does not involve Steps 2-5. In this case, the weight of a chunk is exactly the number of vertices in the respective longest path in \cref{algorithm:partitionA}, and the steps of \cref{algorithm: weight_alg} and \cref{algorithm:partitionA} are essentially the same in the view of \ref{constructA}. 
    
    For type $D$, the weight algorithm essentially does not involve Steps 4-6, and it can be checked that the weight so defined will achieve the expected goal.
    
    Indeed, all the algorithms run recursively on subdiagrams, so it suffices to prove for the step when it involves the ``diamond''. We consider the last few vertices in $\Gamma$. Again, by switching $J_1$ and $J_2$ or switching the last two vertices, the proof is similar. We shall call a selection in the weight algorithm \emph{admissible} if the resulting $J$ it will produce is a possible output of \cref{algorithm:partitionD} followed by the construction in \ref{constructD}. In particular, a selection is admissible if it corresponds to a maximal set of disjoint longest paths in the graph $D$ of \cref{algorithm:partitionD}.

{
\centering
\begin{tabular}[H]{| c | c | M{8cm} |}
    \hline
    Case & $\Gamma$ &    \\
    \hline
    1 & $
    \dynkin[%
                    edge length=1cm,
                    labels={,J_1,J_1,J_1,J_1}]D{*.****}
    $
      & 
        This chunk containing the last $m$ vertices has weight $2m-1$. The only critical case is when a type $A$ chunk of rank $2m-2$ is connected to it. In this case, type $A$ chunks appears twice in graph $D$ so type $A$ chunks have the priority. All results in $\mathcal{P}_2$ are admissible.
    \\ \hline

    2 & $
    \dynkin[%
                    edge length=1cm,
                    labels={,J_1,J_1,J_1,J_2}]D{*.****}
    $
      & 

        The graph $D$ can be viewed as two independent copies of type $A$ graphs. All selections in $\mathcal{P}_1$ are admissible.
    \\ \hline

    3 & $
    \dynkin[%
                    edge length=1cm,
                    labels={,J_1,J_1,J_2,J_2}]D{*.****}
    $
      & 

        The only critical case is when the ``diamond'' is involved in tie-breaking in \cref{algorithm:partitionD}, i.e., there is one of the longest paths of length $2$ in the ``diamond'' in the graph $D$. The weights we defined will ensure the selections in $\mathcal{P}_1$ are admissible.
    \\ \hline

    4 & $
    \dynkin[%
                    edge length=1cm,
                    labels={,J_1,J_2,J_1,J_1}]D{*.****}
    $
      & 

        The last two roots are not connected to any dominant chunks. All selections in $\mathcal{P}_1$ are admissible.
    \\ \hline

    5.1 & $
    \dynkin[%
                    edge length=1cm,
                    labels={,J_1,J_1,J_2,J_1,J_2}]D{*.*****}
    $
      & 

        This case requires the rule of the extra root. All selections in $\mathcal{P}_3$ are admissible.
    \\ \hline

    5.2 & $
    \dynkin[%
                    edge length=1cm,
                    labels={,J_2,J_1,J_2,J_1,J_2}]D{*.*****}
    $
      & 

        The $A_2$ chunk $\{\alpha_{n-2},\alpha_n\}$ is not connected to any dominant chunks. All selections in $\mathcal{P}_1$ are admissible.
    \\ \hline

    6 & $
    \dynkin[%
                    edge length=1cm,
                    labels={,J_1,J_2,J_2,J_2}]D{*.****}
    $
      & 

        The chunk of the last three vertices forms a $D_3$ of weight $5$. The only critical case is when a type $A$ chunk of rank $4$ is connected to it. All selections in $\mathcal{P}_2$ are admissible.
    \\ \hline
    
\end{tabular}
}

   We have discussed all possible critical cases when the ``diamond'' part of graph $D$ in \cref{algorithm:partitionD} is involved in a tie-breaking situation, i.e., there are several longest paths connected to each other. Merging with type $A$ parts we see all selections produced by the weight algorithm are admissible. This completes the proof.

\end{proof}

\section{Type $E$} \label{section:E}
In this section, we consider type $E$. Since there is no partition-like classification of nilpotent orbit in exceptional types, we need a different approach. The main idea comes from the treatment for very even partition in type $D$. We provide a way to perform adjoint action to remove one root vector from $X = \sum_{\alpha\in J_1}X_\alpha+\sum_{\beta\in J_2}X_{-\beta}$, according to the local information of a labeled Dynkin diagram. This completes the proof using induction and the result of classical types.

    \subsection{Local Patterns}
    The following lemma becomes handy to cover all cases in type $E$. 
    

  
\begin{lemma} \label{lemma:pattern}
    Let $\mathfrak g$ be a simply-laced reductive Lie algebra. 
\begin{enumerate}
\item Suppose that there exist $\alpha\in J_1$ and $\beta\in J_2$ such that they form $A_2$ as  $$\dynkin[%
parabolic=2,
edge length=.75cm,
labels={\alpha,\beta},
labels*={+,-}]A2$$ and all the roots that are connected to this subdiagram lie in $J_1$. Then $\sum_{\gamma\in J_1}X_\gamma+\sum_{\delta\in J_2}X_{-\delta}$ is conjugate to $\sum_{\gamma\in J_1}X_\gamma+\sum_{\delta\in J_2\setminus\{\beta\}}X_{-\delta}$.

\item Suppose that there exist $\alpha_1,\alpha_2\in J_1$ and $\beta_1,\beta_2\in J_2$ such that they form $A_4$ as  $$\dynkin[%
parabolic=4,
edge length=.75cm,
labels={\alpha_1,\alpha_2,\beta_1,\beta_2},
labels*={+,+,-,-}]A4$$ and all the roots that are connected to this subdiagram lie in $J_1$. Then $\sum_{\gamma\in J_1}X_\gamma+\sum_{\delta\in J_2}X_{-\delta}$ is conjugate to $\sum_{\gamma\in J_1}X_\gamma+\sum_{\delta\in J_2\setminus\{\beta_1\}}X_{-\delta}$.

\item Suppose that there exist $\alpha_1,\alpha_2,\alpha_3\in J_1$ and $\beta_1,\beta_2,\beta_3\in J_2$ such that they form $A_6$ as  $$\dynkin[%
edge length=.75cm,
parabolic=8,
labels={\alpha_1,\alpha_2,\alpha_3,\beta_1,\beta_2,\beta_3},
labels*={+,+,+,-,-,-}]A6$$ and all the roots that are connected to this subdiagram lie in $J_1$. Then $\sum_{\gamma\in J_1}X_\gamma+\sum_{\delta\in J_2}X_{-\delta}$ is conjugate to $\sum_{\gamma\in J_1}X_\gamma+\sum_{\delta\in J_2\setminus\{\beta_1\}}X_{-\delta}$.

\item Suppose that there exist $\alpha_1,\alpha_2,\alpha_3\in J_1$ and $\beta_1,\beta_2,\beta_3\in J_2$ such that they form $E_6$ as  $$\dynkin[%
edge length=.75cm,
parabolic=2,
labels={\beta_1,\beta_2,\beta_3,\alpha_1,\alpha_2,\alpha_3},upside down,
labels*={-,-,-,+,+,+}]E6$$ We assume that no root is connected to $\beta_1$, and all the roots connected this subdiagram lie in $J_1$. Then $\sum_{\gamma\in J_1}X_\gamma+\sum_{\delta\in J_2}X_{-\delta}$ is conjugate to $\sum_{\gamma\in J_1}X_\gamma+\sum_{\delta\in J_2\setminus\{\beta_2\}}X_{-\delta}$.

\item Suppose that $\mathfrak g$ is of type $E_8$, $J_1 = \{\alpha_1,\alpha_2,\alpha_3,\alpha_4\}$, 
 and $J_2 = \{\alpha_5,\alpha_6,\alpha_7,\alpha_8\}$. Then $\sum_{\gamma\in J_1}X_\gamma+\sum_{\delta\in J_2}X_{-\delta}$ is conjugate to $\sum_{\gamma\in J_1}X_\gamma+\sum_{\delta\in J_2\setminus\{\alpha_5\}}X_{-\delta}$.
 $$\dynkin[%
edge length=.75cm,
parabolic=16,
labels={\alpha_1,\alpha_2,\alpha_3,\alpha_4,\alpha_5,\alpha_6,\alpha_7,\alpha_8},upside down,
labels*={+,+,+,+,-,-,-,-}]E8$$
 
\end{enumerate}

Finally, note that if we switch $J_1$ and $J_2$, we obtain the respective results.

\begin{proof}
\begin{enumerate}[noitemsep]

    \item  Consider the adjoint action by a root group element $u_{-\alpha-\beta}(a)$ on each summand. 
         First,
          $$\text{Ad}(u_{-\alpha-\beta}(a))X_\alpha = X_\alpha+caX_{-\beta},$$
          for some nonzero constant $c$. For a positive root $\gamma\neq\alpha,\beta$, $\gamma +k(-\alpha-\beta)$ for $k>0$ is never a root. Hence, $u_{-\alpha-\beta}(a)$ stabilizes all $X_\gamma$. By our assumption, every $\delta\in J_2\setminus \{\beta\}$ is not connected to either $\alpha$ or $\beta$, so $u_{-\alpha-\beta}(a)$ stabilizes all such $X_{-\delta}$. Since we assume that $\mathfrak g$ is simply-laced, $-\alpha-k\beta$ when $k>1$ is not a root, so $u_{-\alpha-\beta}(a)$ stabilizes $X_{-\beta}$. To summarize, 
        $$\text{Ad}(u_{-\alpha-\beta}(a))\left(
        \sum_{\gamma\in J_1}X_\gamma+
        X_{-\beta}+
        \sum_{\delta\in J_2\setminus\{\beta\}}X_{-\delta}
        \right) = 
        \sum_{\gamma\in J_1}X_\gamma+(ca+1)X_{-\beta}+
        \sum_{\delta\in J_2\setminus\{\beta\}}X_{-\delta}.$$
         But then as $c\neq 0$, we can take $a=-1/c$ and this completes the proof.

    \item First apply $\dot s_{\beta_2}$ to $X=\sum_{\gamma\in J_1}X_\gamma+\sum_{\delta\in J_2}X_{-\delta}$ and get $$X_1\defeq c_1X_{\alpha_1}+c_2X_{\alpha_2}+c_3X_{-\beta_1-\beta_2}+c_4X_{\beta_2}+Y,$$ $$Y\defeq \text{Ad}(\dot{s}_{\beta_2})\left(\sum_{\gamma\in J_1\setminus\{\alpha_1,\alpha_2\}}c_\gamma X_\gamma+\sum_{\delta\in J_2\setminus\{\beta_1,\beta_2\}}c_\delta X_{-\delta}\right)$$ where all the $c_i$'s are nonzero.

Apply $u_{-\alpha_2-\beta_1-\beta_2}(a)$ to $X_1$. By our assumption, it stabilizes every root vector that appears except $X_{\alpha_2}$ and $X_{\beta_2}$ as in $(1)$. More explicitly,
$$\text{Ad}(u_{-\alpha_2-\beta_1-\beta_2}(a))X_{\alpha_2} = X_{\alpha_2}+d_1aX_{-\beta_1-\beta_2},$$ and
$$\text{Ad}(u_{-\alpha_2-\beta_1-\beta_2}(a))X_{\beta_2} = X_{\beta_2}+d_2aX_{-\alpha_2-\beta_1},$$ where $d_1$, $d_2$ are nonzero. Hence by setting $a=-c_3/c_2d_1$, we can remove $X_{-\beta_1-\beta_2}$ from the expression and get $$X_2\defeq c_1X_{\alpha_1}+c_2X_{\alpha_2}+c_4X_{\beta_2}-\dfrac{c_3c_4d_2}{c_2d_1}X_{-\alpha_2-\beta_1}+Y.$$ 

Next, we apply $u_{-\alpha_1-\alpha_2-\beta_1}(b)$ to $X_2$. For similar reasons, this element stabilizes every root vector except sending $X_{\alpha_1}$ to $X_{\alpha_1}+d_3yX_{-\alpha_2-\beta_1}$ with $d_3$ nonzero, so we may choose suitable $b$ to remove $X_{-\alpha_2-\beta_1}$ from the expression to get $$X_3\defeq c_1X_{\alpha_1}+c_2X_{\alpha_2}+c_4X_{\beta_2}+Y.$$

Apply $\dot{s}_{\beta_2}$ again. Now $X_3$ is conjugate to $$X_4\defeq \sum_{\gamma\in J_1}c_{\gamma}^{\prime} X_\gamma+\sum_{\delta\in J_2\setminus\{\beta_1\}}c_{\delta}^{\prime}X_{-\delta}$$ where all $c_{\gamma}^{\prime}$ and $c_{\delta}^{\prime}$ are nonzero. 

Finally, by the Lie algebra version of \cref{lem:uniqueness} we can ignore the coefficients, so $X_4$ is conjugate to $\sum_{\gamma\in J_1}X_\gamma+\sum_{\delta\in J_2\setminus\{\beta_1\}}X_{-\delta}$.

\item The conjugation process is similar to (ii), except the Weyl group representative $\dot s_{\beta_2}$ being replaced by $\dot{s}_{\beta_2}\dot{s}_{\beta_3}\dot{s}_{\beta_2}$.

\item  We use the following flow chart to present the conjugation process. The first column records those group elements we use to perform adjoint action. As before we may choose the coefficients so that two root vectors cancel, so we omit the coefficients in the table. The second column records the root vectors that may appear in the expression of the respective nilpotent elements.

\begin{center}
\begin{tabular}{c c}
     & $\alpha_1,\enspace \alpha_2,\enspace\alpha_3,\enspace -\beta_1,\enspace-\beta_2,\enspace-\beta_3$ \\
  $\xrightarrow{\dot{s}_{\beta_1}\dot{s}_{\beta_3}\dot{s}_{\beta_1}}$   & $\alpha_1+\beta_1+\beta_3,\enspace \alpha_2,\enspace \alpha_3,\enspace \beta_1,\enspace -\beta_2,\enspace \beta_3$\\

   $\xrightarrow{u_{-\alpha_1-\beta_1-\beta_2-\beta_3}}$  & $\alpha_1+\beta_1+\beta_3,\enspace \alpha_2,\enspace \alpha_3,\enspace -\alpha_1-\beta_2-\beta_3,\enspace \beta_1,\enspace \beta_3$ \\

   $\xrightarrow{u_{-\alpha_1-\alpha_2-\beta_2-\beta_3}}$  & $\alpha_1+\beta_1+\beta_3,\enspace \alpha_2,\enspace \alpha_3,\enspace -\alpha_1-\alpha_2-\beta_2,\enspace \beta_1,\enspace \beta_3$ \\

   $\xrightarrow{u_{-\alpha_1-\alpha_2-\alpha_3-\beta_2}}$  & $\alpha_1+\beta_1+\beta_3,\enspace \alpha_2,\enspace\alpha_3,\enspace\beta_1,\enspace\beta_3$ \\
   $\xrightarrow{\dot{s}_{\beta_1}\dot{s}_{\beta_3}\dot{s}_{\beta_1}}$  & $\alpha_1,\enspace \alpha_2,\enspace \alpha_3,\enspace -\beta_1,\enspace -\beta_3$ \\

\end{tabular}
\end{center}

Note that there may be other root vectors in the beginning, but by our assumption, every root group element we use here stabilizes them.

\item We will show that $\sum_{\alpha\in J_1}X_\alpha+\sum_{\beta\in J_2}X_{-\beta}$ is conjugate to $\sum_{\alpha\in J}X_{\alpha}$.

We use a format similar to the above, but we simplify the notations further. We denote by $(i_1i_2\cdots i_k)$ the root group element $u_{\alpha_{i_1}+\alpha_{i_2}+\cdots+\alpha_{i_k}}$ in the first column, and for root vectors that may appear in the expression of the respective nilpotent elements in the second column. Also, denote by $-(i_1i_2\cdots i_k)$ correspondingly for the negative root $-\alpha_{i_1}-\alpha_{i_2}-\cdots-\alpha_{i_k}$. Write $\gamma = \alpha_1+\alpha_2+2\sum_{i=3}^7\alpha_i+\alpha_8$, $\delta = \alpha_4+\sum_{i=2}^8\alpha_i$ and $\dot{w}_{678}=\dot{s}_6 \dot{s}_7 \dot{s}_6 \dot{s}_8 \dot{s}_7 \dot{s}_6$.

\begin{center}
\begin{tabular}{c c}
     & $(1), (2), (3), (4), -(5), -(6), -(7), -(8)$ \\
  $\xrightarrow{\dot{s}_{\alpha_6}}$   & $(1), (2), (3), (4), (6), -(8), -(56), -(67)$ \\
   $\xrightarrow{-(456)}$  & $(1), (2), (3), (4), (6),-(8), -(45), -(67)$ \\
   $\xrightarrow{-(245)}$  & $(1), (2), (3), (4), (6), -(8), -(67), -(24567)$ \\

   $\xrightarrow{-(234567)}$  & $(1), (2), (3), (4), (6), -(8), -(67), -(34567), -(2345678)$ \\

   $\xrightarrow{-(134567)}$  & $(1), (2), (3), (4), (6), -(8), -(67), -(1345678), -(2345678), -\gamma$ \\

   $\xrightarrow{-(12345678)}$  & $(1), (2), (3), (4), (6), -(8), -(67), -(2345678), -\gamma$ \\

   $\xrightarrow{-\gamma-\alpha_4}$  & $(1), (2), (3), (4), (6) -(8), -(67), -(2345678)$ \\

    $\xrightarrow{-\delta}$  & $(1), (2), (3), (4), (6), -(8), -(67)$ \\

     $\xrightarrow{\dot{s}_{\alpha_6}}$  & $(1), (2), (3), (4), -(6), -(7), -(8)$ \\

     $\xrightarrow{\dot{w}_{678}}$  & $(1), (2), (3), (4), (6), (7), (8)$ \\
\end{tabular}
\end{center}
\end{enumerate}
\end{proof}
\end{lemma}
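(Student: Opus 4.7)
My plan centers on the observation that each of the five parts asserts a conjugacy between two nilpotent elements differing by a single root vector, and the strategy is to exhibit an explicit conjugating element built from root group elements and Weyl group representatives. The essential input is the pair of adjoint-action formulas recalled at the end of the preliminaries: for a root group element, $\Ad(u_\gamma(a))X_\delta$ deforms $X_\delta$ by adding $\sum c_{\gamma,\delta,i}\,a^i X_{\delta+i\gamma}$ summed over positive integers $i$ with $\delta+i\gamma\in\Phi$; the simply-laced hypothesis guarantees that this sum truncates at $i=1$. In every case I will exploit this first-order linearity to cancel a single prescribed root vector by a single scalar choice of $a$, while verifying combinatorially that no other root vectors in the expression are disturbed, which reduces to checking that certain sums of roots fail to be roots. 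Scalar coefficients that arise in intermediate stages will be harmless because the Lie algebra analogue of \cref{lem:uniqueness} recalled in \cref{equivalent} lets me ignore them within a fixed nilpotent orbit.

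Part (i) is the base case and sets the template: the conjugating element is $u_{-\alpha-\beta}(a)$ alone. The sum $\alpha+(-\alpha-\beta)=-\beta$ is a root, so $\Ad$ by this element adds a nonzero scalar multiple of $X_{-\beta}$ to $X_\alpha$; for every other summand $X_\gamma$ with $\gamma\in J_1\setminus\{\alpha\}$, $X_{-\delta}$ with $\delta\in J_2\setminus\{\beta\}$, or $X_{-\beta}$ itself, the shifted root fails to lie in $\Phi$, using both the simply-laced condition and the hypothesis that only $J_1$-roots touch the $A_2$ subdiagram. Choosing $a$ to cancel the original $X_{-\beta}$ summand completes the case.

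Parts (ii), (iii), (iv) are moderate escalations, and I will treat them by a \emph{sandwich} argument: first conjugate by a Weyl-group representative $\dot w$ to ``open up'' the interior of the relevant $J_2$-chain, so that each problematic root vector can be individually cancelled by a root group element $u_\mu(a)$ whose root $\mu$ is a negative sum of simple roots supported in the local pattern; then conjugate back by $\dot w^{-1}$. The natural choices are $\dot s_{\beta_2}$ for (ii), $\dot s_{\beta_2}\dot s_{\beta_3}\dot s_{\beta_2}$ for (iii), and $\dot s_{\beta_1}\dot s_{\beta_3}\dot s_{\beta_1}$ for (iv). At each intermediate step I will verify (a) that $\mu$ plus the target root is a root, so cancellation is possible, and (b) that $\mu$ plus every other currently present root is not a root, so nothing else is disturbed. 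These checks are purely combinatorial verifications inside the simply-laced root system once the correct sequence is identified, and the hypothesis that surrounding roots lie in $J_1$ (or, in (iv), that $\beta_1$ is a leaf) is what prevents collateral cancellations.

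The main obstacle is part (v), the pure $E_8$ pattern. Unlike (ii)--(iv), there is no diagram-automorphism symmetry to exploit, and $E_8$ is large enough that no short sequence of root group conjugations suffices; instead, one must chain together a longer sequence that climbs through the partial order on positive roots, cancelling one term at each stage. My plan is to first apply $\Ad(\dot s_{\alpha_6})$ to reshape the negative part, then perform a carefully ordered sequence of root group conjugations by roots of increasing height, culminating in the highest roots of $E_8$, each designed to eliminate exactly one intermediate term, and finally conjugate back by $\dot s_{\alpha_6}$ together with a longer Weyl representative such as $\dot s_6\dot s_7\dot s_6\dot s_8\dot s_7\dot s_6$ to restore the remaining simple root vectors. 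The difficulty is primarily bookkeeping: I would organize the argument as a table listing the support of the current nilpotent element after each step, and verify at each step that exactly one cancellation occurs and no spurious new terms arise. Without the freedom to absorb nonzero scalars afforded by \cref{equivalent}, tracking coefficients through such a long sequence would be prohibitive; with it, the verification reduces to a finite combinatorial search through the positive roots of $E_8$.
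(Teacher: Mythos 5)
Your proposal follows essentially the same route as the paper: a single root group conjugation for (i), the sandwich by $\dot s_{\beta_2}$ (resp.\ $\dot s_{\beta_2}\dot s_{\beta_3}\dot s_{\beta_2}$, $\dot s_{\beta_1}\dot s_{\beta_3}\dot s_{\beta_1}$) with intermediate root-group cancellations for (ii)--(iv), the long tabulated chain of conjugations in $E_8$ opened and closed by $\dot s_{\alpha_6}$ and $\dot s_6\dot s_7\dot s_6\dot s_8\dot s_7\dot s_6$ for (v), and the Lie-algebra version of \cref{lem:uniqueness} to absorb the nonzero scalars. The plan is correct; what remains is only the finite combinatorial bookkeeping you already identify, which the paper carries out in the displayed tables.
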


\begin{remark}
The same trick in proving this lemma can also be used to calculate the fusion map for types $F_4$ and $G_2$, but we rather use folding.
\end{remark}

\subsection{Detect local patterns}

Now we show that at least one of the local patterns in \cref{lemma:pattern} occurs for a given pair $(J_1,J_2)$ in type $E$.

\begin{notation}
We use $+$ as shorthand for $J_1$ and $-$ for $J_2$ in the labeled Dynkin diagrams.
\end{notation}

\begin{lemma} \label{lemma:detection}
  All the labeled Dynkin diagrams of type $E$ lie in at least one of the situations in \cref{lemma:pattern}
  
  \begin{proof}
      We only prove this Lemma for $E_8$. The proofs for $E_6$ and $E_7$ are similar.
      
      First let us consider the roots $\{\alpha_1,\alpha_3,\alpha_4\}$. There can be $8$ distinct labelings for this $A_3$ subdiagram. By Lemma \ref{flip}, we just need to consider four labelings:
      $$
\dynkin[%
edge length=.75cm,
labels*={-,+,+}]A3,\qquad \dynkin[%
edge length=.75cm,
labels*={+,-,+}]A3,\qquad \dynkin[%
edge length=.75cm,
labels*={+,+,-}]A3,\qquad \dynkin[%
edge length=.75cm,
labels*={+,+,+}]A3. $$

The first two cases are in the situation of \cref{lemma:pattern} (1). Now we start to consider the last two cases, starting by considering the label of $\alpha_2$ and followed by $\alpha_5$, $\alpha_6$, $\alpha_7$, and $\alpha_8$. 

\begin{enumerate}[label=Case \arabic*.]
\item Suppose that $\{\alpha_1,\alpha_3\}\subseteq J_1$ and $\alpha_4\in J_2$.

    \begin{enumerate}[label*=\arabic*.,ref=\theenumi.\arabic*,leftmargin=2em]
        \item Suppose $\alpha_2\in J_2$, and the Dynkin diagram looks like  $
    \dynkin[%
    edge length=.5cm,
    labels={+,-,+,-},upside down]E8.$
    
    No matter what labels $\alpha_5$ has, $\{\alpha_1,\alpha_2,\alpha_3,\alpha_4\}$ forms a subdiagram in the situation of \cref{lemma:pattern}(2).
    
    \item Suppose that $\alpha_2\in J_1$, and the Dynkin diagram looks like $
    \dynkin [%
    edge length=.5cm,
    labels={+,+,+,-},upside down]E8.$
    
    If $\alpha_5\in J_1$, then $\{\alpha_3,\alpha_4\}$ forms a subdiagram in the situation of \cref{lemma:pattern}(1). Now suppose that $\alpha_5\in J_2$. If $\alpha_6\in J_1$, then $\{\alpha_1,\alpha_3,\alpha_4,\alpha_5\}$ form a subdiagram in the situation of \cref{lemma:pattern}(2), so we suppose that $\alpha_6\in J_2$, and list the all the possible labeled Dynkin diagrams as follow. We use $\circ$ instead of $\bullet$ for the vertices in the subdiagrams where the local pattern appears.
    $$
    \dynkin[%
    edge length=.5cm,
    labels={+,+,+,-,-,-,+,+},upside down] E{****oooo},
    \dynkin[%
    edge length=.5cm,
    labels={+,+,+,-,-,-,+,-},upside down] E{******oo},
    \dynkin[%
    edge length=.5cm,
    labels={+,+,+,-,-,-,-,+},upside down] E{******oo},
    \dynkin[%
    edge length=.5cm,
    labels={+,+,+,-,-,-,-,-},upside down] E{oooooo**}$$
    
    \end{enumerate}

\item Suppose that $\{\alpha_1,\alpha_3,\alpha_4\}\subseteq J_1$.

    \begin{enumerate}[label*=\arabic*.,ref=\theenumi.\arabic*,leftmargin=2em]
    \item Suppose $\alpha_2\in J_2$, and the Dynkin diagram looks like $
    \dynkin[%
    edge length=.5cm,
    labels={+,-,+,+},upside down]E8.$
    
    If $\alpha_5\in J_1$, then $\{\alpha_2,\alpha_4\}$ forms a subdiagram in the situation of \cref{lemma:pattern}(1), so we may suppose that $\alpha_5\in J_2$.
    
    \begin{enumerate}[label*=\arabic*.,ref=\theenumi.\arabic*,leftmargin=2em]
        \item Suppose $\alpha_6\in J_1$ and the Dynkin diagram looks like $
    \dynkin[%
    edge length=.5cm,
    labels={+,-,+,+,-,+},upside down]E8.$
    
    If $\alpha_7\in J_1$, then $\{\alpha_5,\alpha_6\}$ forms a subdiagram in the situation of \cref{lemma:pattern}(1), so we may suppose that $\alpha_7\in J_2$. There are two cases. Again, we use $\circ$ instead of $\bullet$ for the vertices in the subdiagrams where the local pattern appears.
    
    $$
    \dynkin[%
    edge length=.5cm,
    labels={+,-,+,+,-,+,-,+},upside down] E{******oo}, \qquad
    \dynkin[%
    edge length=.5cm,
    labels={+,-,+,+,-,+,-,-},upside down] E{*****oo*}$$
    
    \item Suppose $\alpha_6\in J_2$ and the Dynkin diagram looks like $
    \dynkin[%
    edge length=.5cm,
    labels={+,-,+,+,-,-},upside down]E8.$
    \end{enumerate}
    
    If $\alpha_7\in J_1$, then $\{\alpha_1,\alpha_2,\alpha_3,\alpha_4,\alpha_5,\alpha_6\}$ forms a subdiagram in the situation of \cref{lemma:pattern}(4), so we may suppose that $\alpha_7\in J_2$. There are two cases.
    $$
    \dynkin[%
    edge length=.5cm,
    labels={+,-,+,+,-,-,-,+},upside down] E{******oo}, \qquad
    \dynkin[%
    edge length=.5cm,
    labels={+,-,+,+,-,-,-,-},upside down] E{o*ooooo*}$$
    
    \item Suppose $\alpha_2\in J_1$, and the Dynkin diagram looks like $
    \dynkin[%
    edge length=.5cm,
    labels={+,+,+,+},upside down]E8.$
    
    One can similarly easily check that such labeled Dynkin diagrams are in situations of \cref{lemma:pattern}(1)-(3),(5).
    \end{enumerate}
\end{enumerate}
\end{proof}
\end{lemma}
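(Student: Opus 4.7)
The plan is to proceed by an exhaustive but highly structured case analysis on the labeling of the simple roots, exploiting the flip symmetry (\cref{flip}) and Dynkin automorphisms (\cref{DiagramAuto}) to cut the number of distinct cases roughly in half. Since the arguments for $E_6$ and $E_7$ are structurally similar to, and strictly simpler than, that for $E_8$ (they correspond to restricting attention to a subset of the eight simple roots, and $E_6$ has an extra $\mathbb{Z}/2$-diagram symmetry), I would concentrate the detailed work on $E_8$, remarking only where the smaller types genuinely require a different check.

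For $E_8$, the approach is to pin down the labels progressively, starting from the short branch $\{\alpha_1,\alpha_3,\alpha_4\}$, with $\alpha_4$ the branching vertex under Bourbaki's convention. Among its $2^3 = 8$ labelings, after applying flip symmetry only four need to be considered. Two of them, namely $(-,+,+)$ and $(+,-,+)$, immediately exhibit pattern (1) of \cref{lemma:pattern}: the relevant $(+,-)$ pair forms an $A_2$ whose only external neighbor in $E_8$ lies in $J_1$, so the side condition ``all adjacent roots are $+$'' is automatic. This leaves the two ``uniform'' configurations $(+,+,+)$ and $(+,+,-)$ to analyze further, by branching on the label of $\alpha_2$ (the short vertical arm at the branching vertex) and then, if still necessary, on the labels of $\alpha_5,\alpha_6,\alpha_7,\alpha_8$ in succession along the long tail. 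At each node of the branching tree I will verify whether the labels fixed so far already force a subdiagram matching one of patterns (1)--(5), and prune that branch as soon as they do.

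The main obstacle is purely combinatorial: with $2^8 = 256$ total labelings (halved by flip), and with five distinct target patterns each carrying an ``all external neighbors have a common sign'' side condition, the case tree could easily explode without careful organization. The delicate branches are the long alternating sign runs along the tail, which evade the short patterns (1) and (2) because somewhere along the chain a neighbor ends up with the ``wrong'' sign; these are typically caught by pattern (3) on an $A_6$ sub-chain, by pattern (4) invoked on the $E_6$-subdiagram sitting inside $E_8$, or, in the extreme $+^4-^4$ configuration along the long axis, by pattern (5) applied to the full $E_8$ itself. The verification ultimately reduces to confirming that none of the leaves of the case-split tree escapes without matching at least one of the five patterns, which is the bookkeeping-heavy but mechanical heart of the argument.
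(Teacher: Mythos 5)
Your proposal follows essentially the same route as the paper's proof: reduce to $E_8$, fix the labeling of the $A_3$-chain $\{\alpha_1,\alpha_3,\alpha_4\}$, halve the eight cases via \cref{flip}, dispose of $(-,+,+)$ and $(+,-,+)$ by pattern (1), and then branch on $\alpha_2,\alpha_5,\dots,\alpha_8$, pruning each branch as soon as one of the patterns (1)--(5) appears, with the long alternating tails caught exactly as you describe by patterns (3), (4), and (5). The only cosmetic difference is your additional appeal to \cref{DiagramAuto}, which is vacuous for $E_8$ and only relevant for $E_6$.
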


Now we prove \cref{thm:regularity} when $G$ is of type $E$.

\begin{proof}
Combining \cref{lemma:pattern} and \cref{lemma:detection}, we have shown that for a given pair of disjoint subsets $(J_1,J_2)$ of $I$ (in type $E$), and start from $\sum_{\alpha\in J_1}X_\alpha+\sum_{\beta\in J_2}X_{-\beta}$, we can find some suitable group elements and perform the adjoint action to remove one of the root vectors in the summation expression.

After removing one of the root vectors, the nilpotent element lies in a Levi subalgebra which is reductive and of smaller semi-simple rank. Then the proof is completed by induction.
\end{proof}

\subsection{The Proof of \cref{theorem:weightthm} for type $E$} \label{section:weightproofE}
The local patterns iteratively find one of the representatives of the image of a pair $(J_1,J_2)$ under the fusion map, it remains to show this is in the same equivalence class as the representative produced by the weight algorithm. Here we make use of the following proposition which is an easy corollary of \cite[2.12]{LS} and use a computer program \cite{program} to verify this is indeed the case.

\begin{proposition}
If two standard Levi subgroups $L_J$ and $L_{J^\prime}$ are conjugate then there exist sequences of subsets $J_0,J_1,\cdots,J_n$ and $K_0,K_1,\cdots,K_n$ of $I$ with the following properties:
\begin{enumerate}
    \item $J_0 = J$ and $J_n = J^{\prime}$;
    \item $J_{i-1}\cup J_i\subseteq K_i$;
    \item $J_i = w_i\cdot J_{i-1}$, where $w_i$ is the longest element in the parabolic subgroup $W_{K_i}$ of $W$;
    \item $J^{\prime}=w\cdot J$ where $w = \prod_{i=1}^{n}w_i$.
\end{enumerate}
\end{proposition}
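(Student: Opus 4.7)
The plan is to reduce the proposition to a purely combinatorial statement about the Weyl group and then invoke the structural result of Lusztig--Spaltenstein \cite[2.12]{LS}.

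First, I would verify that $G$-conjugacy of the standard Levi subgroups $L_J$ and $L_{J^\prime}$ is equivalent to $W$-conjugacy of $J$ and $J^\prime$ as subsets of $I$. Given $g\in G$ with $gL_Jg^{-1}=L_{J^\prime}$, both $T$ and $gTg^{-1}$ are maximal tori of $L_{J^\prime}$ and hence conjugate within $L_{J^\prime}$; absorbing that adjustment into $g$, we may assume $g\in N_G(T)$. The induced class $w\in W$ carries the root system of $L_J$ to that of $L_{J^\prime}$; multiplying $w$ on the left by a suitable element of $W_{J^\prime}$ (which also lifts to $L_{J^\prime}$) lets us arrange that $w$ sends positive roots of $L_J$ to positive roots of $L_{J^\prime}$, and therefore $w\cdot J=J^\prime$.

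With this reduction in hand, the proposition becomes the claim that any $w\in W$ with $w\cdot J=J^\prime$ factors as $w=w_n\cdots w_1$, where each $w_i$ is the longest element of some $W_{K_i}$ and the intermediate subsets $J_i=w_i\cdots w_1\cdot J$ satisfy $J_{i-1}\cup J_i\subseteq K_i$. I would argue by induction on $\ell(w)$, choosing $w$ of minimal length in the set of elements taking $J$ to $J^\prime$. The base case $\ell(w)=0$ is immediate with $n=0$. For the inductive step, the crux is to produce an elementary factor $w_1=w_{K_1}$, where $K_1\supseteq J$, such that $J_1:=w_1\cdot J$ is again a subset of $I$ with $J\cup J_1\subseteq K_1$, and moreover the residual element $ww_1^{-1}$ satisfies the length-additivity $\ell(ww_1^{-1})=\ell(w)-\ell(w_1)$. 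Applying the inductive hypothesis to this residual element (which conjugates $J_1$ to $J^\prime$ and has strictly smaller length) supplies the remaining factors.

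The main technical obstacle is the existence of such an elementary factor $w_1$ together with the required length-additivity, which is the substance of \cite[2.12]{LS}. The argument there proceeds by analysing distinguished minimal-length representatives of the cosets $W_J\backslash W$ and identifying $K_1$ essentially through the support of a suitable initial segment of a reduced expression of $w$. Arranging the sign conventions so that $w_i\cdot J_{i-1}$ lands inside $I$ rather than in $-I$, and verifying the chained compatibility $J_{i-1}\cup J_i\subseteq K_i$ across all steps, is the most delicate point; once the initial step is secured, iteration together with the length drop $\ell(ww_1^{-1})<\ell(w)$ guarantees termination in finitely many steps.
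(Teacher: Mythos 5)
Your proposal is correct and matches the paper's approach: the paper gives no independent argument but simply derives the proposition as an easy corollary of \cite[2.12]{LS}, which is exactly the reduction (from $G$-conjugacy of standard Levi subgroups to $W$-conjugacy of $J$ and $J'$, followed by the Lusztig--Spaltenstein factorization) that you outline.
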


\bibliographystyle{unsrt}

\bibliography{Regularity_of_Unipotent_Elements_in_Total_Positivity}

\end{document}